\documentclass[11pt,a4paper]{article}
\usepackage{geometry}
\usepackage[authoryear,longnamesfirst]{natbib}

\usepackage{mathtools} %GA
\usepackage{amssymb,amsfonts}
\usepackage{amsthm}
\newtheorem{theorem}{Theorem}
\newtheorem{definition}[theorem]{Definition}
\newtheorem{lemma}[theorem]{Lemma}
\newtheorem{corollary}[theorem]{Corollary}

\newtheorem{assumption}[theorem]{Assumption}
\newtheorem{remark}[theorem]{Remark}
\usepackage{algpseudocode}
\usepackage{algorithm}
\usepackage{booktabs}
\usepackage{multirow}
\usepackage{xcolor}
\usepackage{hyperref}
\usepackage{natbib}
 \bibpunct[, ]{(}{)}{,}{a}{}{,}%
\usepackage{rotating}
\usepackage{fancyvrb}
\usepackage{enumitem}
\usepackage{cases}
\usepackage{nicefrac}

\newcommand{\mcO}{\mathcal{O}}
\newcommand{\mbR}{\mathbb{R}}
\newcommand{\mbP}{\mathbb{P}}
\newcommand{\mbE}{\mathbb{E}}
\newcommand{\mcF}{\mathcal{F}}
\newcommand{\mcB}{\mathcal{B}}
\newcommand{\mcX}{\mathcal{X}}

\newcommand{\mcN}{\mathcal{N}}
\newcommand{\mcU}{\mathcal{U}}
\newcommand{\mcA}{\mathcal{A}}

\newcommand{\mfU}{\mathfrak{U}}
\newcommand{\mfu}{\mathfrak{u}}
\newcommand{\mfs}{\mathfrak{s}}
\newcommand{\bsmfs}{\boldsymbol{\mathfrak{s}}}
\newcommand{\fhat}{\hat{f}}
\newcommand{\varHat}{\widehat{\text{Var}}}

\newcommand{\bftheta}{\boldsymbol{\theta}}
\newcommand{\UU}{\boldsymbol{U}}
\newcommand{\VV}{\boldsymbol{V}}
\newcommand{\XX}{\boldsymbol{X}}

\newcommand{\uu}{\boldsymbol{u}}
\newcommand{\vv}{\boldsymbol{v}}
\newcommand{\xx}{\boldsymbol{x}}

\newcommand{\var}{\textrm{Var}}
\newcommand{\RR}{\mathbb{R}}

\newcommand{\Wmax}{w_{\max}}
\newcommand{\Cmax}{c_{\max}}
\newcommand{\Kmax}{k_{\max}}
\newcommand{\Deltamax}{\Delta_{\max}}

\newcommand{\sigmamin}{\sigma_{\min}}
\newcommand{\sigmamax}{\sigma_{\max}}
\newcommand{\nbar}{\overline{n}}

\newcommand{\ovgamma}{\overline{\gamma}}
\newcommand{\ungamma}{\underline{\gamma}}
\newcommand{\mfL}{\mathfrak{L}}
\newcommand{\mcL}{\mathcal{L}}

\newcommand{\mre}{\mathrm{e}}

\newcommand{\term}{\mathfrak{N}_{F,\mu}}
\newcommand{\SE}{\textrm{SE}}
\newcommand{\SG}{\textrm{SG}}
\newcommand{\PO}{\textrm{PO}}
\newcommand{\EX}{\textrm{EX}}

\newcommand{\NS}{\textrm{NS}}
\newcommand{\bsa}{\boldsymbol{a}}
\newcommand{\bsb}{\boldsymbol{b}}

\newcommand{\bsy}{\boldsymbol{y}}
\newcommand{\bss}{\boldsymbol{s}}
\DeclareMathOperator*{\argmin}{argmin}

\begin{document}
\let\WriteBookmarks\relax
\def\floatpagepagefraction{1}
\def\textpagefraction{.001}
%\shorttitle{Stratified adaptive sampling for derivative-free stochastic trust-region optimization}
%\shortauthors{Amici, Shashaani, Jain}
%\begin{frontmatter}

%\title [mode = title]{Stratified adaptive sampling for derivative-free stochastic trust-region optimization}  
\title{Stratified adaptive sampling for derivative-free stochastic trust-region optimization}  

\author{ Giovanni Amici\footnote{Department of Industrial and Systems Engineering, North Carolina State University, 915 Partners Way, Raleigh, NC 27606, USA, {\em gamici@ncsu.edu}},
Sara Shashaani\footnote{Department of Industrial and Systems Engineering, North Carolina State University, 915 Partners Way, Raleigh, NC 27606, USA, {\em sshasha2@ncsu.edu}},
and 
Pranav Jain\footnote{Department of Industrial and Systems Engineering, North Carolina State University, 915 Partners Way, Raleigh, NC 27606, USA, {\em pjain23@ncsu.edu}}}

\maketitle

\begin{abstract}
There is emerging evidence that trust-region (TR) algorithms are very effective at solving derivative-free nonconvex  stochastic optimization problems in which the objective function is a Monte Carlo (MC) estimate. A recent strand of methodologies adaptively adjusts the sample size of the MC estimates by keeping the estimation error below a measure of stationarity induced from the TR radius. In this work we explore stratified adaptive sampling strategies to equip the TR framework with accurate estimates of the objective function, thus optimizing the required number of MC samples to reach a given \emph{$\epsilon$-accuracy} of the solution. 
%We prove a reduced sample complexity and explore inexpensive implementations in high dimension. 
We prove a reduced sample complexity, confirm a superior efficiency via numerical tests and applications, and explore inexpensive implementations in high dimension.
\end{abstract}

% \begin{graphicalabstract}
% \includegraphics{figs/cas-grabs.pdf}
% \end{graphicalabstract}

% \begin{highlights}
% \item Research highlights item 1
% \item Research highlights item 2
% \item Research highlights item 3
% \end{highlights}

% \begin{keywords}
% Stochastic optimization \sep
% Stratified sampling \sep
% Adaptive sampling \sep
% Trust-region optimization \sep
% Derivative-free optimization \sep
% Model fitting \sep
% Portfolio management
% \end{keywords}

{\it Keywords:} Stochastic optimization,
Stratified sampling,
Adaptive sampling,
Trust-region optimization,
Derivative-free optimization,
Model fitting,
Portfolio management.

\section{Introduction}\label{sec:intro}
\noindent
%\theme{Motivation to use stochastic opt DF algos}
In recent years, stochastic derivative-free optimization algorithms have become increasingly popular to solve problems governed by uncertainty.
Of particular interest are optimization problems in which only noisy evaluations of the objective function are available, requiring Monte Carlo (MC) methods to estimate the quantities of interest.
Examples of such problems can be found in engineering \citep{amaran2016simulation}, machine learning \citep{lan2020first},
%still ML-> \citealp{li2023stochastic}
and finance.
In many of these contexts, the gradient cannot be accessed, thus the problem becomes \emph{derivative-free} \citep{conn2009introduction, more2009benchmarking}.
Furthermore, the complex interactions between the relevant quantities can often make the problem nonconvex, preventing a strand of efficient convex optimization algorithms.

%\theme{Approaches to our kind of problem and recent works}
Methodologies that deal with the aforementioned issues include, stochastic direct search and model-based  algorithms with line-search methods being among the former and the trust-region methods among the latter. Along with recent advances in unconstrained settings (see, e.g., \citealp{ghadimi2013stochastic, rinaldi2024stochastic, shashaani2018astro,blanchet2019convergence}),
the incorporation of constraints is addressed for example in the line search context  \citep{cristofari2021derivative}, providing an ad hoc method to efficiently solve high dimensional problems under mild assumptions on the structure of the objective function.
%Methodologies that do not even attempt to exploit the structure of the objective function are random search algorithms \citep{andradottir2006overview, diouane2015globally}, which are more generally suited to deal with global optimization problems at the expense of being relatively complex from the computational viewpoint.
Methods that do not attempt to exploit the structure of the objective function are typically classified as random search algorithms \citep{andradottir2006overview, diouane2015globally}, which are well-suited for global optimization but often incur significant computational cost.
Whichever class of optimization algorithms is adopted, the estimation of the relevant uncertain quantities is of paramount importance in order to provide reliable solutions at an acceptable computational cost.
%When the problem requires Monte Carlo estimates, sampling becomes the key factor.

%\theme{Our work and contributions}
In this work we explore stratified adaptive sampling strategies to efficiently allocate the computational effort to the objective function estimation.
We show that equipping the stochastic optimization problem with a dynamic stratification of the state space of the underlying random vector can produce a lower \emph{sample complexity}, i.e., the amount of function evaluations required to reach a given solution accuracy.
Our work fits into the recent strand of adaptive sampling stochastic trust-region literature \citep{shashaani2016astro,shashaani2018astro,ha2025iteration,ha2025complexity}, but the sampling mechanism can be adopted for general stochastic derivative-free optimization algorithms.

While adaptive sampling has been thoroughly investigated by the aforementioned papers, the use of stratified sampling \citep{glasserman2004monte} in the optimization context is more recent (see, e.g., \citealp{jain2023wake, jain2024simulation}), but with the main difficulty of identifying strata during optimization.
In contrast, in this work we focus on a dynamic uniform stratification coupled with an inverse transform map to sample from the distribution of interest.
This study aims to quantify the benefits of stratification on the convergence and computational complexity of optimization algorithms, which is a key contribution of our work.
We demonstrate that well-designed stratification can significantly enhance convergence rates and reduce the number of simulations required to achieve optimal solutions. Our analysis extends the classical stratified sampling variance reduction characterization that was only for the bounded-support random variables to a broader and more generalized cases of random vectors.
Moreover, in the interest of generality we allow the distribution of the noise to depend on the decision vector, making our framework remarkably flexible. 
To support our theoretical results, we also provide some numerical implementations of our algorithm, showing its superior performance with respect to suitable benchmarks.
Furthermore, we discuss a parsimonious data-driven sampling procedure (alternative to our main sampling procedure) that one can adopt when the problem is high dimensional and large datasets are available, such as in machine learning applications.

%\theme{Applications (mostly in finance)}
\subsection{Applications}
We envisage a range of applications for the optimization problem analyzed in this work.
Perhaps the most obvious real-world problems under our mild assumptions on the objective function regard simulation-based applications and black-box optimization problems.
However, there are other problems which can fit into our framework.
For example, in the context of model estimation, the expectation-maximization algorithm \citep{dempster1977maximum} solves a sequence of stochastic optimization problems in which the risk factor depend on the current iterate.
Moreover, in the maximum likelihood context, when the stochastic process to be estimated is multidimensional and the associated density function is only available up to a Fourier transform inversion (see, e.g., \citealp{ballotta2017multivariate,amici2025multivariate,amici2025multivariateLevy,bianchi2025welcome}), it may be convenient to use sampling techniques to approximate the multidimensional integral involved in the Fourier inversion
%, which is interpreted as an expectation
(e.g., importance sampling for high-dimensional option pricing as in \citealp{ballotta2022powering}).

Other applications can be found in the context of model calibration.
For example, in option pricing applications \citep{cont2010model}, the pricing formula is typically formulated as an expectation and may require sampling procedures if the underlying assets are multiple, i.e., when the random component of the optimization problem is multidimensional. 
Examples of this kind include calibration to index options (see e.g. \citealp{neufeld2023model, bondarenko2024option}), when the pricing model is used to describe the behavior of the index components and their dependence.
Many of the aforementioned estimation and calibration problems in finance are in fact characterized by non-convex objective functions (see, e.g., \citealp{cont2004nonparametric}) and are unconstrained in the parameters of the model, such as in the case of geometric Brownian motion or the variance gamma process \citep{schoutens2003levy}. 
Moreover, objective functions are typically not available analytically, making derivative-free algorithms particularly appealing.

Moreover, our optimization method can be a suitable candidate for certain allocation problems.
In these applications the objective function could be nonconvex, for example, when it includes higher order moments of the distribution of interest or when it is formulated as an (expected) utility function.
As we allow the random component of the optimization problem to depend on the decision vector, we emphasize that this characteristic is present in applications in which the \emph{price impact} effect of decisions (e.g., trades) is modeled (see, e.g., \citealp{webster2023handbook}).
These applications can be found in contexts of high-frequency trading \citep{schied2019market, hey2025trading}, portfolio optimization \citep{chen2014analytical, iancu2014fairness, edirisinghe2023optimal}, and operations management \citep{wei2014optimal, cruise2019control}.

In the aforementioned applications the probability distribution of the random vector of interest is typically known, at least up to an estimation.
Accordingly, it is possible to simulate the random vector by first sampling from a unit hypercube, and then applying an inverse map (e.g., the Rosenblatt transformation, \citealp{rosenblatt1952remarks}) to sample from the distribution of interest.
Thus, employing stratification strategies can be relatively simple, when applied to the hypercube, and reduce the overall complexity of the algorithm, as revealed throughout the paper.
In contrast, in situations in which estimating a distribution is either expensive or not possible, it may be convenient to use data-driven sampling techniques, which are in some form more consistent to black-box optimization problems; we will discuss a data-driven alternative formulation of the algorithm towards the end of the paper.

%\theme{Structure of the paper}
The remainder of the paper is organized as follows.
In Section \ref{sec:problem_statement}, we describe the optimization problem of interest.
Section \ref{sec:preliminaries} reports some preliminary notions on the trust-region framework under which we propose the new sampling procedure.
In Section \ref{sec:sastro}, we provide the key results on the convergence and the complexity of our algorithm, showcasing a comparison with related benchmarks.
In Section \ref{sec:applications}, we illustrate our main algorithm, some numerical numerical experiments, and a data-driven extension, and Section \ref{sec:conclusions} concludes.

\subsection{Problem Statement} \label{sec:problem_statement}
\noindent
%\theme{Basic problem}
We are interested in solving the following optimization problem.
Let $F:\RR^d\times\RR^q\to\RR$ be a function taking as input a set of parameters $\bftheta\in \Theta \subseteq \RR^d$ and a random vector $\XX$ defined in the probability space $(\Omega, \mathcal{F}, \mathbb{P})$ and taking values in $\mcX \subseteq \RR^q$.
Our objective is then to find 
\begin{align}\label{eq:opt}
    \bftheta^* = \argmin_{\bftheta\in \Theta \subseteq \RR^d
    } \left\{f(\bftheta):=\mathbb{E}[F(\bftheta,\XX)]\right\}.
\end{align}
Here, we assume $f$ to be possibly nonconvex and continuously differentiable, but we do not assume access to the function derivatives and hence the optimization problem becomes derivative-free.
Moreover, we allow $\XX$ to depend on the decision vector $\bftheta$ (for example, the distribution of $\XX$ may be parametrized by $\bftheta$).
Furthermore, we assume that the expectation in \eqref{eq:opt} can only be computed up to a MC estimation.
%, and each MC call is freshly generated at each evaluation of the objective function. %->quantum computing
These characteristics make the study of problem \eqref{eq:opt} appealing for a variety of applications, as reported in Section~\ref{sec:intro}.

%\theme{Motivation for adaptive sampling}
The expected quantity in a data-driven setting where $n$ copies of the $q$-dimensional input data $\{\xx_i\}_{i=1}^{n}$ are available can turn into a deterministic problem with sample average approximation, i.e., 
\begin{equation*}%\label{eq:opt_SAA}
    \bftheta_n^* = \argmin_{\bftheta\in \Theta \subseteq \RR^d
    } \left\{\fhat (\bftheta, n) :=\frac{1}{n} \sum_{i=1}^n F(\bftheta,\xx_i)\right\}.
\end{equation*}
In this setting, evaluating the objective function at every instance of $\bftheta$ using the entire dataset can be computationally expensive. Additionally, with a fixed sample size $n$, $\bftheta_n^*$ will never converge to $\bftheta^*$, which is to say that even if we find $ \bftheta_n^*$ exactly, it may not be an optimal parameter for a set of unseen data.

It is possible to deal with these shortcomings by sampling a random amount $N$ of data points at each evaluation of the objective function. This random sample size will have to go to infinity if we have any hopes of our iterative optimization algorithm converging to $\bftheta^*$. An \emph{adaptive sampling} strategy would govern how quickly this sample size should go to infinity based on how quickly the algorithm will make progress towards optimality. Since the standard error of our estimate is proportional to $N^{-1/2}$, a possible adaptive sampling idea may be to force the standard error to be smaller than a measure of stationarity in the optimization algorithm. Specifically, when the algorithm requests an estimate of the objective function at an iterate $\bftheta_k$, the adaptive sampling rule would choose the sample size as 
\begin{align}\label{eq:adaptive_sampling_rule_PRELIMINARIES}
    N_k=N(\bftheta_k)=\min\left\{n:\ \sqrt{\frac{\widehat{\var}(F(\bftheta_k,\XX) \mid \bftheta_k )}{n}}\leq \text{ measure of stationarity at } \bftheta_k \text{ deflated with }k\right\},
\end{align} 
% \begin{equation}
%    N_k=\min\left\{n \:\geq N_{k-1}: \sqrt{\widehat{\text{Var}}(\tilde{f}(\bftheta_k,n))}\leq \phi(\Delta_k) \right\},
% \end{equation}
where $\widehat{\text{Var}}(F(\bftheta_k,\XX))$ is an estimate of the variance of $F(\bftheta_k,\XX)$. The \emph{deflation} of the stationarity measure with a deterministic decreasing sequence in $k$ is warranted given the stochasticity of the variance estimate. That is, the rate at which the estimation error drops to zero should be somewhat faster than the rate of convergence to stationarity.

The estimate of the variance helps with adjusting the sample size based on the local variability of the objective function. This sequential sampling procedure was first introduced for the ASTRO-DF (Adaptive Sampling Trust-Region Optimization--Derivative-Free) algorithm~\citep{shashaani2018astro}, yielding a sample size that is a stopping time and random. Existing work has used a \emph{dynamic sampling} process that presumes access to the true variance in stochastic gradient and line-search methods~\citep{bollapragada2018adaptive,franchini2023line} yielding a deterministic sample size. In all these approaches, utilizing variance reduction techniques that can reduce the estimation error with less number of samples can be an effective tool to improve the \emph{sample complexity}. Among the recent work in this direction using adaptive importance sampling see \citep{camellini2025line}. Sample complexity is measured in terms of total number of MC samples before reaching an $\epsilon$-stationary solution; for the first-order unconstrained stationarity this is when $\|\nabla f(\bftheta_k)\|<\epsilon$. The first iteration to reach this condition, termed as $T_\epsilon$, is a stopping-time random variable, as $\{\bftheta_k\}_{k\geq0}$ is a random sequence. An analysis of the sample complexity, therefore, would entail providing a probabilistic sense of how large the number of total samples would be as a function of $\epsilon$.
In a derivative-free setting, let $\bftheta_k$ be the current iterate, $\{\bftheta_k^{(1)}, \bftheta_k^{(2)}, \ldots, \bftheta_k^{(p)}\}, \ p\in\mathbb{N},$ be the additional interpolation points used to construct the $k$-th trust region model, and $\bftheta_k^{s}$ denote the $k$-th candidate solution. 
%that will assist the algorithm in finding the candidate solution
By convention we will sometimes denote $\bftheta_k$ as $\bftheta_k^{(0)}$ and $\bftheta_k^s$ as $\bftheta_k^{(p+1)}$.
Then, the sample complexity at $\epsilon$-stationarity involves characterization of the random variable
\begin{align}\label{eq:total_samples_PRELIMINARIES}
    W_{\epsilon} 
    := \sum_{k=0}^{T_\epsilon} \left( 
    \sum_{i=0}^{p} N\left(\bftheta_k^{(i)}\right)
    + N(\bftheta_k^s)
    \right)
\end{align}
where $N(\bftheta_k^{(i)})$ is the sample size of the $i$-th point estimation at iteration $k$, and $N(\bftheta_k^{s})$ is the sample size at the candidate evaluation point (henceforth denoted $N_k^{(i)}$ and $N_k^{s}$), as a function of $\epsilon$. 
For example, a sublinearly converging algorithm would obtain $W_{\epsilon} = \mcO(\epsilon^{-a}),\ a>0$ with high probability or with probability one or in expectation.

\subsection{Our Contributions}
%\theme{Our method and recap of results}
In this paper, we analyze the effect of stratified sampling on the sample complexity of ASTRO-DF; we develop a variant of ASTRO-DF termed SASTRO-DF that features a stratified adaptive sampling.
%%%%%%%%%%
%ASTRO-DF has been proven to attain $W_\epsilon=\mathcal{O}(\epsilon^{-6}\log \epsilon^{-1})$ with probability one (a well-known result for stochastic nonconvex derivative-free optimization algorithms) and when $F(\bftheta,\cdot)$ is Lipschitz continuous, $W_\epsilon=\mathcal{O}(\epsilon^{-4}\log \epsilon^{-1})$ by means of common random numbers (CRN) \citep{ha2025complexity}. 
Recent papers on the ASTRO-DF explored the complexity of the algorithm equipping it with a common random numbers (CRN) scheme to generate samples of $\XX$.
CRN uses the same MC draws $\XX_{1},\ldots,\XX_{N_k}$ for the evaluation of the objective function on points $\bftheta_k^{(i)},\ i=0, \ldots, p+1$, but in some applications, one may not have the ability to use the same samples. Additionally, the ability to improve sample complexity under CRN relies on \emph{sub-exponentiality} of $F(\bftheta,\XX)$ for all $\bftheta\in\Theta$ to invoke Bernstein tail bounds. In this paper, we let the MC instances change across points and provide an analysis that does not rely on subexponential tails for $F(\bftheta,\XX)$ and employs the more general Chebyshev tail bounds, giving a broad range of applicability to our algorithm.
%including the optimization problems in finance. 
Our numerical results in Section~\ref{sec:applications} show that ASTRO-DF with Bernstein-bound logarithmic deflators fall short of solving simple portfolio optimization in low dimensions when given limited computational budget significantly underperforming the stratified adaptive sampling with Chebyshev-bound polynomial deflators.

Most importantly, we explore stratification strategies to optimize the number of samples required to reach $\epsilon$-stationarity.
When the distribution of $\XX$ satisfies some regularity conditions, we show that SASTRO-DF via $q$-dimensional stratification  for $q< 4$ attains a lower sample complexity than the standard no-stratification strategy 
implicitly employed in the recent papers on the ASTRO-DF algorithm, under the same conditions.
%}
The key factor for such better performance is that we equip the optimization algorithm with a more accurate estimator, which in turn allows for a fewer samples to satisfy  \eqref{eq:adaptive_sampling_rule_PRELIMINARIES}.
The improvement in efficiency is remarkable in low dimensional $\XX$ (noise space) and more  moderate when the dimension of $\XX$ increases (regardless of the dimension of $\bftheta$).
We point out that $q$-dimensional stratification is in general not suited for large $q$ as a consequence of the exponential increase of the number of strata.
A more parsimonious stratification in high dimension can be performed via Latin hypercube sampling \citep{mckay1979comparison,stein1987large}, however the advantage of this technique is limited to the case in which $\XX$ has independent components (see a discussion in Chapter 4.4 \citealp{glasserman2004monte}).

Furthermore, to extend the use case of SASTRO-DF for machine learning problems with large datasets, we discuss a \emph{data-driven} one-dimensional stratification strategy in which each $q$-dimensional entry of a given dataset is mapped to a given point of the unit interval.
As opposed to the $q$-dimensional stratification, in this situation it is not possible to create a continuous bijective map between the space of the input data $\XX$ and the space in which the stratification is applied.
Thus, this sampling strategy lives in a purely discrete distribution-free framework in which only subsets of the available data can be sampled.
%We note that this sampling mechanism could be extremely relevant for applications in which large datasets are available.

\section{Preliminaries}\label{sec:preliminaries}
\noindent
To solve the problem defined in Section \ref{sec:problem_statement} we rely on a trust-region framework mainly inspired by the ASTRO-DF algorithm of \citep{shashaani2016astro,shashaani2018astro}.
In particular, we build a local surrogate model at each iteration and adaptively determine the sample size for sample average approximation.
If $\bftheta_k$ is the iterate at iteration $k$, a local model $M_k(\cdot)$ is built in a ball $\mcB_k(\bftheta_k,\Delta_k) = \{\bftheta : \|\bftheta - \bftheta_k \|_2 \leq \Delta_k \}$  around $\bftheta_k$ with $\Delta_k$ being the trust-region radius.
In the zeroth order setting, this local model may be constructed via second order polynomial interpolation using $2d$ interpolation points in $\mcB_k$ in addition to the center, i.e., the iterate $\bftheta_k$ (see Definition 2.1 in \citep{ha2025iteration} for this particular case and \citep{roberts2025introduction} for more general constructions).
In the following we report the formal definition of this type of interpolation model.

%%%%%
\begin{definition}\label{def:polynomial_model}(Stochastic polynomial interpolation model (SPIM))\\
Let $\mcB(\bftheta_k,\Delta_k) = \{\bftheta\in\Theta \ : \ \|\bftheta-\bftheta_k\| \leq \Delta_k\}$ be the trust region at iteration $k$, where $\bftheta_k$ is the current solution and $\Delta_k$ is the trust-region radius.
Let $\Theta_k = (\bftheta_k^{(i)},i=0,1,\ldots,p)$ be a set of design points at iteration $k$, where $\bftheta_k^{(0)}=\bftheta_k$, and let $\tilde{\boldsymbol{f}}_k = \left(\tilde{f}_k(\bftheta_k^{(0)}, N_k^{(0)}), \tilde{f}_k(\bftheta_k^{(1)}, N_k^{(1)}), \ldots, \tilde{f}_k(\bftheta_k^{(p)}, N_k^{(p)}) \right)^{\top}$ be the corresponding vector of estimated objective functions.
Let $\Phi = (\phi_0,\phi_1,\ldots,\phi_r)^{\top}$ be a vector whose components compose a polynomial basis on $\RR^d$, $\beta_k=(\beta_{k,0}, \beta_{k,1}, \ldots, \beta_{k,r})^{\top}$ be a vector of $\RR$-valued coefficients, and 
\begin{align}\label{eq:matrix_M_general}
    \mathcal{M}(\Phi,\Theta_k) = \begin{bmatrix}
    \Phi(\bftheta_k^{(0)})^\top \\
    \Phi(\bftheta_k^{(1)})^\top \\
    \vdots \\
    \Phi(\bftheta_k^{(p)})^\top
    \end{bmatrix}.
\end{align}
Then, by letting $p=r$, the SPIM of the true objective function $f$ is given by 
    $M_k(\bftheta) = \sum_{i=0}^p \beta_{k,i} \phi_i(\bftheta)$
where $\beta_k$ solves $\mathcal{M}(\Phi,\Theta_k) \ \beta_k = \tilde{\boldsymbol{f}}_k$.
\\
\end{definition}

% %%%%%
% \begin{definition}\label{def:polynomial_model_L}(Linear - stochastic polynomial interpolation model (L-SPIM))\\
% Keep the same notation of Definition \ref{def:polynomial_model}.
% Let $p=r=d$, the design points be
% $
% \Theta_k^{\text{L}}
% =
% \left\{
% \bftheta_k^{(0)},\;
% \bftheta_k^{(0)} + \Delta_k e_1,\;
% \dots,\;
% \bftheta_k^{(0)} + \Delta_k e_d
% \right\}
% $,
% where \(\{e_j\}_{j=1}^d\) denotes the canonical basis of \(\mathbb{R}^d\),
% and the polynomials be defined as
% $
% \phi^{\text{L}}_0(\bftheta)=1, \
% \phi^{\text{L}}_j(\bftheta)=\theta_j-\theta_{k,j}=\Delta_k, \
% j=1,\ldots,d,
% $
% where $\theta_j$ and $\theta_{k,j}$ are the $j$-th scalar components of $\bftheta$ and $\bftheta_{k}=\bftheta_k^{(0)}$, respectively.
% Then, matrix $\mathcal{M}$ defined in  \eqref{eq:matrix_M_general} is specified as
% \begin{align}
% \mathcal{M}(\Phi^{\text{L}},\Theta_k^{\text{L}}) =
% \begin{bmatrix}
% 1 & 0 & 0 & \cdots & 0 \\
% 1 & \Delta_k & 0 & \cdots & 0 \\
% 1 & 0 & \Delta_k & \cdots & 0 \\
% \vdots & \vdots & \vdots & \ddots & \vdots \\
% 1 & 0 & 0 & \cdots & \Delta_k \\
% \end{bmatrix}
% \end{align}
% Then, the L-SPIM of the true ojective function $f$ is given by
% \begin{align}\label{eq:polynomial_model_L}
%     M_k^{\text{L}}(\bftheta) = \sum_{i=0}^p \beta_{k,i}^{\text{L}} \phi_i^{\text{L}}(\bftheta),
% \end{align}
% where $\beta_k^{\text{L}}$ solves $\mathcal{M}(\Phi^{\text{L}},\Theta_k^{\text{L}}) \ \beta_k^{\text{L}} = \tilde{\boldsymbol{f}}_k$.
% \\
% \end{definition}

%%%%%
\begin{definition}\label{def:polynomial_model_DQ}(Diagonal quadratic - stochastic polynomial interpolation model (DQ-SPIM))\\
Keep the same notation of Definition \ref{def:polynomial_model}.
Let $p=r=2d$, the design points be
$
\Theta_k^{\text{DQ}}
=
\{
\bftheta_k^{(0)},\;
\bftheta_k^{(0)} \pm \Delta_k e_1,\;
\dots,\;
\bftheta_k^{(0)} \pm \Delta_k e_d
\}
$,
where \(\{e_j\}_{j=1}^d\) denotes the canonical basis of \(\RR^d\),
and the polynomials be defined as
$
\phi^{\text{DQ}}_0(\bftheta)=1, \
\phi^{\text{DQ}}_j(\bftheta)=\theta_j-\theta_{k,j}=\Delta_k, \
\phi^{\text{DQ}}_{d+j}(\bftheta)=\frac{1}{2}(\theta_j-\theta_{k,j})^2=\frac{1}{2}\Delta_k^2, \
j=1,\ldots,d,
$
where $\theta_j$ and $\theta_{k,j}$ are the $j$-th scalar components of $\bftheta$ and $\bftheta_{k}=\bftheta_k^{(0)}$, respectively.
Then, matrix $\mathcal{M}$ defined in  \eqref{eq:matrix_M_general} is specified as
\begin{align*}
\mathcal{M}(\Phi^{\text{DQ}},\Theta_k^{\text{DQ}}) =
\begin{bmatrix}
1 & 0 & 0 & \cdots & 0 & 0 & 0 & \cdots & 0 \\
1 & \Delta_k & 0 & \cdots & 0 & \tfrac{\Delta_k^2}{2} & 0 & \cdots & 0 \\
1 & 0 & \Delta_k & \cdots & 0 & 0 & \tfrac{\Delta_k^2}{2} & \cdots & 0 \\
\vdots & \vdots & \vdots & \ddots & \vdots & \vdots & \vdots & \ddots & \vdots \\
1 & 0 & 0 & \cdots & \Delta_k & 0 & 0 & \cdots & \tfrac{\Delta_k^2}{2} \\
1 & -\Delta_k & 0 & \cdots & 0 & \tfrac{\Delta_k^2}{2} & 0 & \cdots & 0 \\
1 & 0 & -\Delta_k & \cdots & 0 & 0 & \tfrac{\Delta_k^2}{2} & \cdots & 0 \\
\vdots & \vdots & \vdots & \ddots & \vdots & \vdots & \vdots & \ddots & \vdots \\
1 & 0 & 0 & \cdots & \underset{d}{\underbracket{-\Delta_k}} & 0 & 0 & \cdots & \underset{2d}{\underbracket{\tfrac{\Delta_k^2}{2}}}
\end{bmatrix}
\end{align*}
Then, the DQ-SPIM of the objective function $f$ is given by
    $M_k^{\text{DQ}}(\bftheta) = \sum_{i=0}^p \beta_{k,i}^{\text{DQ}} \phi_i^{\text{DQ}}(\bftheta)$,
where $\beta_k^{\text{DQ}}$ solves $\mathcal{M}(\Phi^{\text{DQ}},\Theta_k^{\text{DQ}}) \ \beta_k^{\text{DQ}} = \tilde{\boldsymbol{f}}_k$.
\\
\end{definition}

%%%%%%
The local model is then minimized in $\mcB_k$ to give a candidate solution $\bftheta^s_{k}$.
This candidate solution is accepted ($\bftheta_{k+1} = \bftheta^s_{k}$) if sufficient reduction in objective function is achieved at $\bftheta^s_{k}$ and if the model-gradient (tracing the true-gradient) is not much smaller than the trust-region radius (a need arising in derivative-free settings). If the candidate solution is accepted, the trust-region radius expands; otherwise, the trust-region radius shrinks, and $\bftheta_{k+1} = \bftheta_k$, resulting in a new model in a smaller region at the same incumbent for the next iteration.

% In our setting, the trust-region model $M_k$ is a fully linear model of the estimated objective function $\tilde{f}$ (see Definition 10.3 in \citealp{conn2009introduction}). %useful for convergence proof in Corollary \ref{cor:convergence_of_the_algo}
% It follows that, using the aforementioned zeroth order setting, $M_k$ is a \emph{stochastically fully linear model} of  $f$ in the trust region (see Definition 2.3 and Remark 2 in \citealp{ha2025iteration}), which is a useful property to ensure the convergence of the algorithm, once the quality of the estimator $\tilde{f}$ has been assessed.

To update the trust region we adopt the following mechanism, similar to ASTRO-DF.
First, at any iteration $k$, in order to accept a new candidate solution the \emph{acceptance ratio} $\rho_k$ must be greater that or equal to a constant $\eta\in(0,1)$, that is,
\begin{equation}\label{eq:trust_region_rho}
    \rho_k := \frac{\tilde{f}(\bftheta_k,N_k) - \tilde{f}(\bftheta_k^s,N_k^s)}{M_k(\bftheta_k) - M_k(\bftheta_k^s)} \geq \eta.
\end{equation}
%either equal to the Cauchy step or to a better performing step.
%
Second, we update the trust-region radius as
\begin{equation}\label{eq:trust_region_update}
    \Delta_{k+1} =
    \begin{cases}
        \min(\ovgamma \Delta_k, \Deltamax) & \text{if\: $\rho_k\geq\eta$ and $\Delta_k\leq \tilde{\eta}\|\nabla M_k(\bftheta_k)\|$}, \\
        \ungamma \Delta_k & \text{otherwise},
    \end{cases}
\end{equation}
for some $0 < \ungamma < 1 < \ovgamma$, $\Deltamax>0$, and $\tilde{\eta}>0$.
All these mechanisms are formalized and summarized in Algorithm~\ref{alg:sastro-df}.

In addition, consistently with the trust-region framework we formulate the adaptive sampling rule to incorporate the trust-region radius $\Delta_k$ and iteration $k$, that is, 
\begin{align}\label{eq:adaptive_sampling_rule_trust_region_PRELIMINARIES}
   N_k^{(i)} = \min\left\{n \in \mathbb{N} \
   %\geq N_{k-1} \ 
   : \ \sqrt{ \widehat{\var} \left(\tilde{f}(\bftheta_k,n)\right)}\leq \varphi(k, \Delta_k) \right\},
   \qquad k\geq0, \quad i=0,1,\ldots,p+1,
\end{align}
as $\Delta_k$ is related to how close the current solution is to stationarity and $\varphi$ is a function constructed in order to guarantee convergence, to be specified later.
The stopping rule in  \eqref{eq:adaptive_sampling_rule_trust_region_PRELIMINARIES} ensures that sample sizes increase over iterations, which enables early exploration and better estimation towards the end of the algorithm.

\section{The SASTRO-DF}\label{sec:sastro}
\noindent

In this section we provide results concerning the variance of the stratified sampling estimator \eqref{eq:stratified_estimator}, the convergence of the SASTRO-DF algorithm, and its complexity.
%(we note that the iteration complexity is the same as the ASTRO-DF of \citealp{shashaani2016astro,shashaani2018astro}).
To this purpose, we introduce the following definitions and assumptions that will be used throughout the paper.

\begin{assumption}\label{ass:lipschitz}
Function $f$ in  \eqref{eq:opt} is bounded below and its gradient is Lipschitz continuous on $\Theta$.
\end{assumption}

\begin{assumption}\label{ass:smooth}
Function $F(\cdot,\XX)$ in  \eqref{eq:opt} is continuously differentiable in $\XX$ and has bounded variance for all $\bftheta\in\Theta$.
\end{assumption}

\begin{assumption}\label{ass:map}
Let $\UU$ be a $q$-dimensional uniform random variable with support $\mcU$.
There exists a continuous and bijective map
$\mu: \mcX\to \mcU$
such that $\mu$ and $\mu^{-1}$ are continuously differentiable almost everywhere. 
In other words, $\mu$ defines a diffeomorphism between the supports $\mcX\subseteq\RR^q$ and $\mcU$ of $\XX$ and $\UU$, respectively \citep{milnor1997topology} almost everywhere in the support.
For ease of exposition, we consider the unit hypercube of the form $\mcU=(0,1]^q$, but it can be adapted to other expressions depending on the particular form of $\mcX$.
\end{assumption}

\begin{remark}
    Without loss of generality, we assume that the map $\mu$ introduced in Assumption \ref{ass:map} is the Rosenblatt transformation \citep{rosenblatt1952remarks}, although it may be constructed differently (see, e.g., \citealp{papamakarios2021normalizing}).
    Thus, it satisfies
    \begin{align*}
        X_1 = \mu_1^{-1}(U_1), \
        X_2 = \mu_2^{-1}(U_2\mid X_1), \
        \ldots, \
        X_q = \mu_q^{-1}(U_q\mid X_1,\ldots,X_{q-1}),
    \end{align*}
    where $\mu_i^{-1}(U_i\mid \cdot)$ denotes the inverse conditional cumulative distribution function of $X_i, \ i=1,\ldots,q$.
    As mentioned in the introduction, we allow the distribution of $\XX$ to depend on $\bftheta$, in which case the map may be denoted as  $\mu_{\bftheta}$; however we omit the subscript for ease of notation.
\end{remark}

% \begin{definition}\label{def:filtration}
%     Let $\{\XX^{[m]}, \ m\geq1\}$ be a sequence of random vectors in the probability space $(\Omega, \mcF,\mbP)$ as defined in  \eqref{eq:opt}.
%     Define the corresponding natural filtration as $\mathcal{G}_n=\sigma(\XX^{[m]}, \ m\leq n), \ n\geq 1$.
%     Let $\{N_k, \ k\geq0\}$ be a stopping time with respect to $\{\mathcal{G}_{N_{k-1}+n}, \ n\geq0\}$, where $N_{-1}:=n_0\in\mathbb{N}$ is an initialization constant.
%     Then, we define the iteration filtration as $\mcF_k:=\mathcal{G}_{N_k}, \ k\geq0$, containing all the information up to the end of iteration $k$, where the first filtration $\mcF_{-1}$ contains the initialization parameters.
% \end{definition}

\begin{definition}\label{def:filtration}
    Let $\{\UU^{[m]}, \ m\geq1\}$ be a sequence 
    %of uniform $[0,1]^q$ 
    random vectors in the probability space $(\Omega, \mcF,\mbP)$.
    Define the corresponding natural filtration as $\mathcal{G}_n=\sigma(\UU^{[m]}, \ m\leq n), \ n\geq 1$.
    Let $\{\lambda_k, \ k\geq-1\}$ be a deterministic sequence of positive real numbers.
    Let $\{N_k^{(i)}, \ k\geq0, \ i=0,1,\ldots,p+1\}$ be a stopping time with respect to $\{\mathcal{G}_{\lceil \lambda_{k-1} \rceil + n}, \ n\geq0\}$.
    Then, we define the iteration filtration as
    $\mcF_k:=\mathcal{G}_{\max_{i} \left\{ N_k^{(i)} \right\}}, \ k\geq0$,
    %$\mcF_k:=\mathcal{G}_{\max_{k,i} \left\{ N_k^{(i)} \right\}}, \ k\geq0$, for any $i$,
    containing all the information up to the end of iteration $k$, where the first filtration $\mcF_{-1}$ contains the initialization parameters.
\end{definition}

\begin{remark}\label{rem:filtration}
    %Consistently with Definition \ref{def:filtration}, f
    For any interpolation or candidate point $i$, the iterate $\{\bftheta_k^{(i)},\ k\geq1\}$ and the trust-region radius $\{\Delta_k, k\geq1\}$ are $\mcF_{k-1}$-adapted processes, whereas the adaptive sample size $\{N_k^{(i)},k\geq1\}$ and the sequence $\{\UU^{[m]}, \ N_{k-1}^{(i)}< m\leq N_{k}^{(i)}\}$ are $\mcF_{k}$-adapted processes.
\end{remark}

\begin{remark}
    In our setting, at each iteration $k$ we reuse the set of uniform samples generated up to iteration $k-1$ and only generate $\sum_{i=0}^{p+1} N_k^{(i)}-N_{k-1}^{(i)}$ new samples, which is consistent with the information flow illustrated in Definition \ref{def:filtration}.
    However, we remark that this does not prevent the inverse transform map to change across iterations, producing a completely new set of instances of $\XX$.
\end{remark}

% \begin{remark}
%     In the interest of generality, in Definition \ref{def:filtration} we have assumed that Monte Carlo samples are \lq\lq refreshed\rq\rq\ across iterations, so that after $k$ iterations we have generated $\sum_{l=0}^{k-1} N_l$ sample deviates.
%     This procedure is especially needed in cases in which the distribution of $\XX$ depends on the decision vector $\bftheta_k$.
%     % However, when the optimization problem allows for it, our framework could be adapted to a more parsimonious strategy in which at each iterations only an incremental number of samples deviates is generated.
%     % That is, the total number of Monte Carlo points after $k$ iterations would be $N_0 + \sum_{l=1}^{k-1} (N_1-N_{l-1}) = N_{k-1}$.
% \end{remark}

% \begin{remark}\label{rem:filtration}
%     In relation to Definition \ref{def:filtration}, the iterates $\{\bftheta_k,k\geq1\}$, the trust-region radii $\{\Delta_k, k\geq1\}$, \magenta{and the adaptive sample sizes $\{N_k,k\geq1\}$}, are $\mcF_{k}$-adapted processes.
%     Thus, given the information induced by $\mcF_{k}$, the only source of randomness comes from the new $N_k$ realizations of the random vector $\XX$ used to estimate the objective function.
% \end{remark}

\begin{definition}
    At each iteration $k$ and interpolation or candidate point $i=0,1\ldots,p+1$, for a given dimension $q$, and for some $l_k^{(i)}\in\mathbb{N}$, we define the sets of strata boundaries and strata hypercubes as
    \begin{align}
    & \mfL_k^{(i)} = \left\{0,\frac{1}{l_k^{(i)}},\ldots, \frac{l_k^{(i)}-1}{l_k^{(i)}} \right\}^q\!\!\!,
    \label{eq:hypercube_strata_leftend}
    \\
    & \mfU_k^{(i)}
    =\left\{
    \prod_{j=1}^q \left( u_j,\, u_j+\frac{1}{l_k^{(i)}} \right]
    \ : \
    \forall (u_1,\ldots,u_q) \in \mfL_k^{(i)}
    \right\},
    \label{eq:hypercube_strata}
    \end{align}
    both with cardinality $L_k^{(i)}:= \left(l_k^{(i)}\right)^q$, which is the total number of strata at iteration $k$ and point $(i)$.
    %\blue{In the remainder of the paper we will sometimes omit the superscript $(i)$ for ease of notation, without affecting the analysis.}
\end{definition}

\begin{definition}
    For any element $\mfu \in \mfU_k, \ k\geq 0$, we define the corresponding stratum in $\mcX$ as
        $\mcX_{\mfu} = \mu^{-1}(\mfu)$, 
    where $\mcX_{\mfu}\cap \mcX_{\mfu'}=\emptyset, \forall \mfu\neq\mfu'$, and $\cup_{\mfu} \mcX_{\mfu}=\mcX$.
\end{definition}

\begin{assumption}\label{ass:strata}
For any dimension $q$, iteration $k$, and interpolation or candidate point $i$, it holds that $L_k^{(i)}\in\mcL_q:=\{\ell\in\mathbb{N} \ : \ \ell^{1/q}\in\mathbb{N}\}$, where $\mcL_q$ denotes the admissible number of strata.
%%% strong assumption: we make it in the interest of exposition clarity, but also in practice it would not be easy to relax such assumption.
The number of samples per strata is the same across strata and iterations, i.e., $N_{k, \mfu}^{(i)} = N_k^{(i)}/L_k^{(i)} := \nbar\in\mathbb{N}$, for all $\mfu\in\mfU_k, \ k\geq0, \ i=0,1,\ldots,p+1$. Thus, it holds that
\begin{equation}\label{eq:set_N}
    N_k^{(i)} \in \mcN_q := \left\{n\in\mathbb{N} \ : \ \frac{n}{\ell}=\nbar \in\mathbb{N}, \:\forall \ell\in\mcL_q  \right\},
    \quad k\geq0, \ i=0,1,\ldots,p+1.
\end{equation}
Also, each stratum has equal probability, corresponding to $1/L_k^{(i)} = \nbar/N_k^{(i)},\: \forall k\geq0, \ i=0,1,\ldots,p+1$.
\end{assumption}

Given the above, for a given decision vector $\bftheta$, samples size $n$, and set of strata $\mfU$ with $\ell=|\mfU|$, we are interested in a stratified sampling estimator with equiprobable strata and proportional allocation of MC samples.
Denoting the stratified sampling estimator with $\tilde{f}$, we have
\begin{align}
    \tilde{f}(\bftheta,n) 
    & = \sum_{\mfu\in\mfU} \mbP(\XX\in\mcX_{\mfu})\ \hat{f}_{\mfu} (\bftheta,n_{\mfu}) = \frac{1}{\ell} \sum_{\mfu\in\mfU} \hat{f}_{\mfu} (\bftheta, \nbar)\label{eq:stratified_estimator}
\end{align}
where $n_{\mfu}$ is the number of samples in stratum $\mfu$ and $\hat{f}_{\mfu}(\bftheta, \nbar)$ is the sample average in stratum $\mfu$ using $\nbar$ points.
In addition, its variance is
\begin{align}\label{eq:stratified_estimator_variance}
    \var(\tilde{f}(\bftheta, n)) 
    & = \frac{1}{n \ell} \sum_{\mfu\in\mfU} \sigma_{\mfu}^2 (\bftheta)
\end{align}
where $\sigma_{\mfu}^2 (\bftheta)$ is the variance in stratum $\mfu$.
In order to compute the sample variance of the stratified sampling estimator $\tilde{f}$, it is sufficient to replace $\sigma_{\mfu}^2 (\bftheta)$ with its estimate $\hat{\sigma}_{\mfu}^2 (\bftheta,\nbar)$ computed with $\nbar$ points in  \eqref{eq:stratified_estimator_variance} (see \citealp{glasserman2004monte}).

\subsection{Variance of the Stratified Sampling Estimator}
\noindent
In this section, we establish the order of magnitude of the variance of the stratified sampling estimator \eqref{eq:stratified_estimator} in terms of a given sample size $n$.
%Thus, in our sequential sampling setting we focus on the case with fixed sample size, and carry out a comparison between order of magnitude of the variance under different specifications of the objective function.
We split the discussion into the two cases depending on whether the gradient of $\mu^{-1}$ is bounded everywhere (see also Chapter V.7 in \citealp{asmussen2007stochastic}) or not.
When not bounded everywhere (which is the case for random variables with unbounded support), we focus the analysis on distributions having mutually independent margins either with sub-exponential or with sub-Gaussian tail behavior.
At the end of the section, we show that the assumption of independence can be circumvented by suitable constructions of $\XX$, while keeping simple sampling mechanisms for $\mu$.

\begin{definition}\label{def:mu_subexp_subgaussian}
    Let $Y$ be a random variable with support $\mathcal{Y}\subseteq \RR$ and cumulative distribution function $\Psi$.
    $Y$, or equivalently, $\Psi$, is sub-exponential with parameter $\kappa_i>0$ if, for $y\in\mathcal{Y}$,
    \begin{align}\label{eq:SE}
        \mbP(|Y| \geq y) = \mcO\left(\mre^{-\kappa_i |y|}\right).
        \tag{$\SE_{\kappa_i}$}
    \end{align}
    $Y$, or equivalently, $\Psi$, is sub-Gaussian with parameter $\xi_i>0$ if, for $y\in\mathcal{Y}$,
    \begin{align}\label{eq:SG}
        \mbP(|Y| \geq y) = \mcO\left(\mre^{- \left(y/\xi_i\right)^2}\right).
        \tag{$\SG_{\xi_i}$}
    \end{align}
\end{definition}

\begin{definition}\label{def:F_poly_exp}
    $F$ is polynomial in $\XX$ with parameter $\bsa=(a_1,\ldots,a_q)\in\mathbb{N}^q$ if
    \begin{align}\label{eq:PO}
        F(\cdot,\XX) = \mcO\left( \sum_{j=1}^q X_j^{a_j} \right).
        \tag{$\PO_{\bsa}$}
    \end{align}
    $F$ is exponential in $\XX$ with parameter $\bsb=(b_1,\ldots,b_q)\in\RR^q$ if
    \begin{align}\label{eq:EX}
        F(\cdot,\XX)=\mcO\left( \mre^{\sum_j b_j X_j} \right).
        \tag{$\EX_{\bsb}$}
    \end{align}
\end{definition}

\begin{theorem}\label{prop:var_strat}
Let Assumptions \ref{ass:smooth}, \ref{ass:map}, and \ref{ass:strata} hold.
Then, given the MC sample size $n$ and the total number of strata $\ell=n/\nbar$, for $\nbar\in\mathbb{N}$, the conditional variance of the stratified sampling estimator has order of magnitude
\begin{align*}\label{eq:var_claim}
    \var \left( \tilde{f} (\bftheta_k,n)\ | \ \mcF_{k-1}\right)
    = \mcO\left( n^{-1} \ \term (n) \right),
\end{align*}
where
\begin{subnumcases}{\term (n) =}
    n^{-2/q} 
    & if \ $\| \nabla\ \mu^{-1} \| <\infty, \ \forall \uu\in[0,1]^q$, 
    \label{eq:order_bounded} 
    \\
    n^{-1/q}\left[ \log\left(n\right) \right]^{2(a_{\max}-1)}
    & if \ $\mu_i$ \ is \ref{eq:SE} $\forall i=1,\ldots,q$, and $F$ is \ref{eq:PO},
    \label{eq:order_mu_subexp_h_power}
    \\
    n^{-1/q (1-2b_{\max}/\kappa_{\min})}
    %n^{(2b_{\max}-\kappa_{\min})/(q\kappa_{\min})}
    & if \ $\mu_i$ \ is \ref{eq:SE} $\forall i=1,\ldots,q$, and $F$ is \ref{eq:EX},
    \label{eq:order_mu_subexp_h_exp}
    \\
    n^{-1/q}\left[ \log\left(n\right) \right]^{a_{\max}-1}
    & if \ $\mu_i$ \ is \ref{eq:SG} $\forall i=1,\ldots,q$, and $F$ is \ref{eq:PO},
    \label{eq:order_mu_subgaussian_h_power}
    \\
    %n^{-1/q+b_{\max} \xi_{\max} \sqrt{8/q}/\log(n)}
    %n^{-1/q} \exp\left\{ b_{\max} \xi_{\max} \sqrt{8\log(n)/q} \right\}
    n^{-1/q+b_{\max} \xi_{\max} \sqrt{8/[q\log(n)]}}
    & if \ $\mu_i$ \ is \ref{eq:SG} $\forall i=1,\ldots,q$, and $F$ is \ref{eq:EX},
    \label{eq:order_mu_subgaussian_h_exp}
\end{subnumcases}
where $a_{\max}=\max_{i}\{a_i\}$, $b_{\max}=\max_{i}\{b_i\}$, 
$\xi_{\max}=\max_{i}\{\xi_i\}$,
$\kappa_{\min}=\min_{i}\{\kappa_i\}$,
$\kappa_{\min}>2b_{\max}$,
and where in the last for cases we assume $X_i \perp X_j, \ \forall i\neq j$.

% If Assumptions \ref{ass:smooth}, \ref{ass:map}, and \ref{ass:strata} hold, the conditional variance of the stratified sampling estimator has order of magnitude
% \begin{subnumcases}{\var \left( \tilde{f} (\bftheta_k,N_k, \mfU_k)\ | \ \mcF_{k}\right)=}
%     \mcO\left(N_k^{-(q + 2)/q} \right) & if \ $\| \nabla_{\uu}\ \mu^{-1} \| <\infty, \ \forall \uu\in[0,1]^q$, 
%     \label{eq:order_bounded} 
%     \\
%     \mcO\left( N_k^{-1} h_0 \right) & if \ $X_i$ \ is SE, \ $\forall i$, \ and \ $X_i \perp X_j, \ \forall i\neq j$, 
%     \label{eq:order_subexp}
% \end{subnumcases}
% where $h_0$ depends on the expression of function $F$ ( \eqref{eq:opt}).
% In particular,
% \begin{subnumcases}{h_0=}
%     \frac{\left[ \log\left(N_k\right) \right]^{2(a_{\max}-1)}}{N_k^{1/q}} 
%     & if \ $F(\cdot,\XX) = \mcO\left( \sum_{j=1}^q X_j^{a_j} \right)$,
%     \label{eq:h0_power}
%     \\
%     N_k^{(2b_{\max}-\kappa_{\min})/q} 
%     & if \ $F(\cdot,\XX)=\mcO\left( \mre^{\sum_j b_j X_j} \right), \ \kappa_{\min}>2b_{\max}$,
%     \label{eq:h0_exp}
% \end{subnumcases}
% where $a_j\in\mathbb{N},\ b_j\in\RR, \ \forall j$, $a_{\max}=\max_{j}\{a_j\}$, $b_{\max}=\max_j\{b_j\}$, and $\kappa_{\min}=\min_j\{\kappa_j\}$.

\end{theorem}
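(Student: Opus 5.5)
Starting from \eqref{eq:stratified_estimator_variance}, conditional on $\mcF_{k-1}$ the iterate $\bftheta_k$ is fixed (Remark~\ref{rem:filtration}), and the strata are fixed too, so
\[
\var\bigl(\tilde f(\bftheta_k,n)\mid\mcF_{k-1}\bigr)=\frac{1}{n}\cdot\frac{1}{\ell}\sum_{\mfu\in\mfU}\sigma_{\mfu}^2(\bftheta_k).
\]
It is therefore enough to show that the average within-stratum variance $\ell^{-1}\sum_{\mfu}\sigma^2_{\mfu}$ is $\mcO(\term(n))$, using $\ell=n/\nbar$ with $\nbar$ fixed, so that $\ell\asymp n$ and each cube in $\mfU$ has side $\ell^{-1/q}\asymp n^{-1/q}$.

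Write $h(\uu)=F(\bftheta_k,\mu^{-1}(\uu))$, so that $\sigma^2_{\mfu}=\var(h(\UU)\mid\UU\in\mfu)$. The basic tool is the estimate
\[
\sigma_{\mfu}^2\le \mbE\bigl[(h(\UU)-h(\uu_{\mfu}))^2\mid \UU\in\mfu\bigr]\le q\,\ell^{-2/q}\sup_{\uu\in\mfu}\|\nabla h(\uu)\|^2 ,
\]
where $\uu_{\mfu}$ is, say, the centre of $\mfu$. This follows from the mean value theorem, which applies because $h$ is $C^1$ almost everywhere by Assumptions~\ref{ass:smooth} and~\ref{ass:map}.

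\emph{Case \eqref{eq:order_bounded}.} If $\nabla\mu^{-1}$ is bounded on $[0,1]^q$, the chain rule bounds $\|\nabla h\|$ uniformly. Here I will also assume that $\nabla_{\XX}F$ is bounded on the compact image, which is implicit in the statement. Every $\sigma_{\mfu}^2$ is then $\mcO(\ell^{-2/q})=\mcO(n^{-2/q})$, and averaging over strata gives \eqref{eq:order_bounded}.

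\emph{Unbounded cases.} Here independence is used. The Rosenblatt map becomes coordinatewise, $X_j=\Psi_j^{-1}(U_j)$, so $\partial_{u_j}h=\partial_{x_j}F\cdot 1/\psi_j(\Psi_j^{-1}(u_j))$. The gradient blows up only near the faces $u_j\to0,1$. I will split $\mfU$ into interior strata, whose closure stays at distance at least $\ell^{-1/q}$ from the boundary, and boundary strata. There are $\mcO(\ell^{(q-1)/q})$ boundary strata, a fraction $\mcO(\ell^{-1/q})$ of the total.

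For the boundary strata I use $\sigma_{\mfu}^2\le \mbE[h^2\mid\UU\in\mfu]$. Their contribution to the average is therefore at most $\mbE[h(\UU)^2;\ \UU\ \text{in an outer slab of width }\ell^{-1/q}]$. That region corresponds to $|X_j|\gtrsim\Psi_j^{-1}(1-\ell^{-1/q})$, which is of order $\kappa_j^{-1}\log\ell$ in the SE case and $\xi_j\sqrt{\log\ell}$ in the SG case.

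For the interior strata I use the gradient bound, with the densities estimated by the tails. In the SE case $1/\psi_j(\Psi_j^{-1}(u))\lesssim 1/\min(u,1-u)$. In the SG case it is $\lesssim 1/\bigl(\min(u,1-u)\sqrt{\log(1/\min(u,1-u))}\bigr)$. The term $\partial_{x_j}F$ is bounded by $|x|^{a_j-1}$ or by $e^{b\cdot x}$. Summing $\ell^{-2/q}\sup_{\mfu}\|\nabla h\|^2$ over interior cubes gives a Riemann-type sum of $\ell^{-2/q}/\min(u,1-u)^2$. Near the edge this sum is dominated by the layer adjacent to the boundary, which again yields order $\ell^{-1/q}$, multiplied by the polynomial or exponential factor evaluated at the cutoff $|x|\sim\log\ell$ or $\sqrt{\log\ell}$.

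Plugging in the cutoffs gives the five rates.
\begin{itemize}
\item SE with PO: the factor $(\log n)^{2(a_{\max}-1)}$.
\item SG with PO: the factor $(\sqrt{\log n})^{2(a_{\max}-1)}=(\log n)^{a_{\max}-1}$.
\item SE with EX: the factor $e^{2b_{\max}\log n/(q\kappa_{\min})}=n^{2b_{\max}/(q\kappa_{\min})}$, which needs $\kappa_{\min}>2b_{\max}$ for integrability and finite variance.
\item SG with EX: the factor $e^{2b_{\max}\xi_{\max}\sqrt{2\log n/q}}$, which rewrites as $n^{b_{\max}\xi_{\max}\sqrt{8/[q\log n]}}$.
\end{itemize}

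The main obstacle is this last boundary and interior bookkeeping. The Riemann sum of $\min(u,1-u)^{-2}$ diverges logarithmically or worse near the edge, so I must choose the interior/boundary cutoff carefully (one stratum width, or possibly a slightly larger layer) and check that the tail integral of $h^2$ beyond the cutoff matches exactly the stated powers of $\log n$ and the exponents. I would first carry out the computation for $q=1$ explicitly, then lift it to $q$ dimensions using the product structure that independence provides, taking the maximum over coordinates to produce $a_{\max}$, $b_{\max}$, $\kappa_{\min}$ and $\xi_{\max}$.
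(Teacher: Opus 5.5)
Your overall architecture matches the paper's. You linearize $h=F(\bftheta_k,\mu^{-1}(\cdot))$ inside each cube to get $\sigma_{\mfu}^2\lesssim \ell^{-2/q}\sup_{\mfu}\|\nabla h\|^2$. You isolate the $\mcO(\ell^{(q-1)/q})$ strata touching the faces, which is what the paper does by appealing to Remark 7.2 of Asmussen--Glynn. You then sum $\ell^{-2/q}(s\ell^{-1/q})^{-2}=s^{-2}$ over the interior layers. This handles case \eqref{eq:order_bounded} and the two \eqref{eq:EX} cases.

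It does \emph{not} give the stated rates in the two \eqref{eq:PO} cases, because your boundary estimate $\sigma_{\mfu}^2\le\mbE[h^2\mid\UU\in\mfu]$ does not remove the conditional mean. On the outermost layer $|X_j|\ge c_\ell:=\Psi_j^{-1}(1-\ell^{-1/q})$, with $c_\ell\asymp\log\ell$ (SE) or $\sqrt{\log\ell}$ (SG). So for, e.g., $F=X_j^{a_j}$ your slab term in the average within-stratum variance is of order $\ell^{-1/q}c_\ell^{2a_{\max}}$. That is $n^{-1/q}(\log n)^{2a_{\max}}$, resp. $n^{-1/q}(\log n)^{a_{\max}}$: a factor $(\log n)^2$, resp. $\log n$, above \eqref{eq:order_mu_subexp_h_power} and \eqref{eq:order_mu_subgaussian_h_power}.

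The paper's own worked example already shows this (linear $F$, exponential margins, take $q=1$). The top stratum $\{X>c_\ell\}$ has $\mbE[X^2\mid X>c_\ell]\approx c_\ell^2$, but its variance is exactly $\kappa^{-2}$. Your bound therefore yields $n^{-2}(\log n)^2$ instead of $n^{-2}$. Re-choosing the interior/boundary cutoff cannot repair this. The gradient bound fails only on the outermost layer, where $\sup_{\mfu}\|\nabla h\|=\infty$, and a wider slab only enlarges the uncentered term. In the \eqref{eq:EX} cases the issue does not arise, because $\mbE[e^{2bX};X>c]\asymp e^{-(\kappa-2b)c}$ and the conditional variance of $e^{bX}$ have the same order. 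That is why your exponents come out right there.

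The missing ingredient is a centered bound on the outermost strata. For example, take $\sigma_{\mfu}^2\le\mbE[(h(\UU)-h(\uu^\ast_{\mfu}))^2\mid\UU\in\mfu]$ with $\uu^\ast_{\mfu}$ the inner corner, together with $0\le X_j^{a_j}-c_\ell^{a_j}\le a_jX_j^{a_j-1}(X_j-c_\ell)$ on $\{X_j>c_\ell\}$. Suppose the overshoot $X_j-c_\ell$ given $X_j>c_\ell$ has bounded moments; for exponential margins it is exactly $\mathrm{Exp}(\kappa_j)$ by memorylessness. Then each boundary stratum carries variance $\mcO(c_\ell^{2(a_{\max}-1)})$. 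With $\mcO(\ell^{(q-1)/q})$ such strata you recover exactly the stated powers of $\log n$. This is the content of the paper's assertion that boundary strata have variance $\mcO(1)$, resp. $\mcO([\log\ell]^{2(a_{\max}-1)})$.

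Such overshoot control does not follow from the one-sided tail bound of Definition~\ref{def:mu_subexp_subgaussian} alone, and neither does your density bound $1/\psi_j(\Psi_j^{-1}(u))\lesssim1/\min(u,1-u)$. The paper sidesteps this by working with exponential margins. The logarithmic loss would be harmless downstream, since it is absorbed by $\varrho,\delta$ in Theorem~\ref{prop:stochastic_error}. As written, however, your argument does not establish the theorem's \eqref{eq:PO} rates.
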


\begin{proof}

To prove the statement, we first bound the conditional variance of a single stratum $\mfu$, i.e., 
\begin{equation*}\label{eq:var_strat}
\sigma_{\mfu}^2(\bftheta_k):= \text{Var}\left(F \left(\bftheta_k,\XX \mid \XX\in\mcX_{\mfu}\right) \ | \ \mcF_{k-1} \right),
\end{equation*}
in terms of total number of strata $\ell$.
%which is known given $\mcF_{k}$.
Let $\mfu\in\mfU$ be a strata in the hypercube according to  \eqref{eq:hypercube_strata}, and $\uu_{\mfu}\in\mfL$ be the corresponding left endpoint vector (see  \eqref{eq:hypercube_strata_leftend}).
%$s_k=T(\mathcal{X}_{s_k})$
%$\uu_i =(u_{1,s}, \ldots, u_{q,s})$ with $u_{j,s} = \min\{u : u\in \mcU_{j,s}\}$.
Let $\VV$ be a $q$-dimensional uniform random vector and 
$\alpha_{\mfu}: (0,1]^q\to \mfu$ such that $\alpha_{\mfu}(\VV):=\uu_{\mfu}+\VV/\ell^{1/q}$.
Then, using Assumptions \ref{ass:smooth} and \ref{ass:map} we apply a Taylor expansion to $F$ around $\vv\in(0,1)^q$ as
\begin{align*}
    F(\bftheta_k, \XX \mid \XX\in\mathcal{X}_{\mfu}) 
    & = F(\bftheta_k, \mu^{-1}(\UU) \mid \UU\in\mfu)
    \\
    & = F(\bftheta_k, \mu^{-1}(\alpha_{\mfu}(\VV)))
    % \\
    % & \approx F(\bftheta_k, \mu^{-1}(\alpha_{\mfu_k}(\zero))) + \sum_{j=1}^q \frac{\partial h}{\partial V_j}(\zero)V_j
    = F(\bftheta_k, \mu^{-1}(\alpha_{\mfu}(\vv))) + \frac{1}{\ell^{1/q}} \sum_{j=1}^q c_{{\mfu},j} (V_j-v_j) + \mcO\left(\frac{1}{\ell^{2/q}}\right)
\end{align*}
where
\begin{align}\label{eq:C_term}
    c_{{\mfu},j}
    & = \sum_{i=1}^q \frac{\partial F}{\partial \mu_i^{-1}} (\bftheta_k, \mu^{-1} (\alpha_{\mfu}(\vv))) \ \frac{\partial \mu_i^{-1}}{\partial \alpha_{{\mfu},j}}(\alpha_{\mfu}(\vv)).
\end{align}
Thus, for large $\ell$, the variance in each stratum reads
\begin{align*}
    \sigma^2_{\mfu} (\bftheta_k)
    & \sim \frac{1}{12 \ell^{2/q}} \sum_{j=1}^q c_{{\mfu},j}^2,
    %\label{eq:var_strat_extended}
\end{align*}
where $\sim$ denotes equality in order of magnitude.
The rest of the proof is split between the case \eqref{eq:order_bounded} and the remaining cases \eqref{eq:order_mu_subexp_h_power}-\eqref{eq:order_mu_subgaussian_h_exp}.

\begin{itemize}
\item[] \textbf{Case  
\eqref{eq:order_bounded}:} We note that the variance in each stratum can be upper-bounded as
\begin{align*}
    \sigma^2_{\mfu} (\bftheta_k)
    & \lesssim \frac{ \Cmax }{\ell^{2/q}},%\label{eq:var_strat_extended_2}
\end{align*}
where $\lesssim$ stands for \lq\lq lower or equal in order of magnitude to\rq\rq, and
where $\Cmax := \frac{1}{12} \max_{\mfu\in\mfU} \left\{ \sum_{j=1}^q c^2_{{\mfu},j} \right\}$ has order of magnitude $\mcO(1)$ 
because, by the condition in  \eqref{eq:order_bounded}, $\mu^{-1}$ is continuously differentiable over $[0,1]^q$, and $F$ is continuously differentiable too (Assumption \ref{ass:smooth}).
In order to upper-bound the conditional variance of the stratified sampling estimator, we then compute
\begin{align}
    \var(\tilde{f}(\bftheta_k, n) \ | \ \mcF_{k-1}) 
    & = \frac{1}{n \ell} \sum_{\mfu\in\mfU} \sigma_{\mfu}^2 (\bftheta_k) \lesssim n^{-1} \ell^{-2/q} \Cmax = n^{-(q+2)/q} \nbar^{2/q} \Cmax = \mcO\left( n^{-(q+2)/q} \right),
    \label{eq:var_fTilde}
\end{align}
where \eqref{eq:var_fTilde} follows from Assumption \ref{ass:strata}.

\item[] \textbf{Cases \eqref{eq:order_mu_subexp_h_power}-\eqref{eq:order_mu_subgaussian_h_exp}:}
By the postulates of the theorem, $\XX$ has mutually independent components with sub-exponential behaviors as described in Definition \ref{def:mu_subexp_subgaussian}.
In the interest of exposition, in the following we assume that $\XX$ has exponentially distributed components, i.e., $\mu_i(x)=1-\mre^{-\kappa_j x}, \ i=1,\ldots,q$, with parameters $\kappa_j>0, \ j=1,\ldots,q$.
Due to the mutually independence of $X_1,\ldots,X_q$,  \eqref{eq:C_term} reduces to
\begin{align}\label{eq:C_term_indep}
    c_{\mfu,j} = \frac{\partial F}{\partial \mu_j^{-1}} (\bftheta_k, \mu^{-1} (\alpha_{\mfu}(\vv))) \ \frac{\partial \mu_j^{-1}}{\partial \alpha_{{\mfu},j}}(\alpha_{\mfu}(\vv)).
\end{align}
If $F(\cdot,\XX)$ is linear in $\XX$, then the first term of  \eqref{eq:C_term_indep} has order $\mcO(1)$.
Define 
$\mathfrak{S} = \{ \uu_{\mfu}\ell^{1/q} \ : \ \forall\mfu\in\mfU \}$,
%$\bsmfs_{\mfu}:=\uu_{\mfu}\lbar, \ \forall\mfu\in\mfU$,
%$\bsmfs_{\mfu}=(\mfs_{\mfu,1},\ldots,\mfs_{\mfu,q}):=\uu_{\mfu} \lbar \in \{0,1,\ldots,\lbar-1\}^q:= \mfL$, 
where $\uu_{\mfu}$ is the left endpoint vector corresponding to strata $\mfu$ according to  \eqref{eq:hypercube_strata_leftend}.
Then,
\begin{align}
    \var(\tilde{f}(\bftheta_k, n) \ | \ \mcF_{k-1}) 
    & = \frac{1}{n \ell} \sum_{\mfu\in\mfU} \sigma_{\mfu}^2 (\bftheta_k)
     \sim \frac{1}{n \ell} \frac{1}{12\ell^{2/q}}
     %\sum_{\bsmfs_\mfu\in\mfL} 
     \sum_{\bsmfs_\mfu\in\mathfrak{S}}
     \sum_{j=1}^q \frac{1}{\left[\kappa_j \left(1-\frac{\mfs_{\mfu,j}+v_j}{\ell^{1/q}}\right)\right]^2}
    \label{eq:var_fTilde_subexp} \\
    & \sim \frac{1}{n \ell^{\frac{q+2}{q}}} 
    %\sum_{\bsmfs_{\mfu}\in\mfL} 
    \sum_{\bsmfs_{\mfu}\in\mathfrak{S}}
    \sum_{j=1}^q \frac{\ell^{2/q}}{(\ell^{1/q}-\mfs_{\mfu,j}-v_j)^2}
     \sim \frac{1}{n \ell} 
     %\sum_{\bsmfs_{\mfu} \in \mfL} 
     \sum_{\bsmfs_{\mfu} \in \mathfrak{S}}
     \sum_{j=1}^q \frac{1}{\mfs_{\mfu,j}^2}
     = \frac{q\ell^{\frac{q-1}{q}}}{n \ell}   \frac{\pi^2}{6}
     = \mcO\left(n^{-\frac{q+1}{q}}\right)\!,
    \nonumber
\end{align}
where $\pi\approx3.14$.

We note that the above result has analogous interpretation of the argument of Remark 7.2, Chapter V.7, in \citep{asmussen2007stochastic}.
That is, for large $\ell$, the number of strata in which the variance has order $\mcO(1)$ is of the order $\mcO(\ell^{(q-1)/q})$, which implies \eqref{eq:var_fTilde_subexp}.
We use this fact to generalize the above result for any $F$ being polynomial (see \eqref{eq:PO}). Namely, we have
\begin{align*}
    \var(\tilde{f}(\bftheta_k, n) \ | \ \mcF_{k-1}) 
    & = \frac{1}{n \ell} \mcO\left( \left[\log\left(\ell\right)\right]^{2(a_{\max}-1)} \ell^{(q-1)/q}\right)
    = \mcO\left(n^{-1} \frac{\left[\log\left(n\right)\right]^{2(a_{\max}-1)}}{n^{1/q}} \right),
\end{align*}
which proves \eqref{eq:order_mu_subexp_h_power}.
In the same fashion, we derive complexity \eqref{eq:order_mu_subexp_h_exp} in the case in which $F$ is exponential (see \eqref{eq:EX}) and $\kappa_{\min}>2b_{\max}$.
This last condition is required to ensure that the variance of $F(\cdot,\XX)$ is bounded.

If $\mu_i$ is sub-Gaussian for all $i$ (see  \eqref{eq:SG}), then $\mu_i^{-1}(u) \sim \xi_i\sqrt{2\log\left(1/(1-u)\right)}$ and $(\mu_i^{-1})'(u) \sim \frac{\xi_i}{(1-u)\sqrt{2\log(1/(1-u))}}$ as $u\to1$, while $\mu_i^{-1}(u) \sim -\xi_i\sqrt{2\log\left(1/u\right)}$ and $(\mu_i^{-1})'(u) \sim \frac{\xi_i}{u\sqrt{2\log(1/u)}}$ as $u\to0$ (see, e.g., \citealp{fung2018quantile}).
Following the above procedures, \eqref{eq:order_mu_subgaussian_h_power}-\eqref{eq:order_mu_subgaussian_h_exp} can be verified.
% for us: the sum of \mu^{-1}' in the SG case converges even faster than the \pi^2/6 observed for exponential; so results hold by using the fact that SE is worse than SG. So in practice the O() we are computing only depend on \mu^{-1} and not on \mu^{-1}', for these SG cases.
%\hfill\halmos
\end{itemize}
\end{proof}

Theorem \ref{prop:var_strat} provides a direct comparison between the variance of the stratified sampling estimator and the no-stratification estimator.
By taking the total number of strata to be $\ell=1,\forall k$, the variance of the no-stratification estimator has order $\mcO(n^{-1})$ and is always larger than the orders of magnitude in \eqref{eq:order_bounded}-\eqref{eq:order_mu_subgaussian_h_exp}.
It can be observed that especially for moderate values of $q$, the improvement of the stratified sampling estimator can be significant when $\ell$ is large.
For example, when $\|\nabla \mu^{-1}\|$ is bounded for all $\uu\in[0,1]^q$, in order for the stratified sampling estimator to yield at least a $c$-times faster decrease than the no-stratification estimator, the number of strata should be
\begin{equation}\label{eq:beta}
\ell_{k,c} \gtrsim \left\lceil c^{q/2} \right\rceil, \quad k\geq0.
\end{equation}

The next corollary highlights that we can construct factor-based random vectors with nontrivial dependence structure (see, e.g., \citealp{ballotta2016multivariate}) such that the variance of the stratified sampling estimator has an order similar to those of \eqref{eq:order_mu_subexp_h_power}-\eqref{eq:order_mu_subgaussian_h_exp}, when $\mcX$ is unbounded.
This implies that, under such specifications, $\XX$ can be conveniently sampled by only using unconditional distributions of the mutually independent factors that determine $\XX$.

\begin{corollary}\label{cor:factor_based}
    Let $\XX$ be a $\tilde{q}$-dimensional factor-based random vector of the form
    \begin{align*}
        \XX = 
        \begin{bmatrix}
            Y_1 + \sum_{m=1}^r \beta_{1,m} Z_m
            \\
            \vdots
            \\
            Y_{\tilde{q}} + \sum_{m=1}^r \beta_{\tilde{q},m} Z_m
        \end{bmatrix}.
    \end{align*}
    where $Y_1,\ldots,Y_{\tilde{q}},Z_1,\ldots,Z_r$ are mutually independent random variables and $\beta_{i,m}\in\RR, \forall i=1,\ldots,\tilde{q}, \ m=1,\ldots,r$, are coefficients.
    Then, the orders of magnitude of the variance under the four specifications of $F$ and $\mu$ of \eqref{eq:order_mu_subexp_h_power}-\eqref{eq:order_mu_subgaussian_h_exp} are the same of \eqref{eq:order_mu_subexp_h_power}-\eqref{eq:order_mu_subgaussian_h_exp} by setting $q=\tilde{q}+r$.
\end{corollary}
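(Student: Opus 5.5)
The plan is to reduce the corollary to Theorem~\ref{prop:var_strat} by changing the underlying random vector. Stratification is carried out on the $q=\tilde{q}+r$ mutually independent factors $\boldsymbol{W}:=(Y_1,\ldots,Y_{\tilde{q}},Z_1,\ldots,Z_r)$ rather than on $\XX$ itself. Write $\XX=A\boldsymbol{W}$ with the $\tilde{q}\times q$ matrix $A=[I_{\tilde{q}}\ \vert\ B]$, where $B=(\beta_{i,m})$. Then define $G(\bftheta,\boldsymbol{W}):=F(\bftheta,A\boldsymbol{W})$, so that $f(\bftheta)=\mbE[G(\bftheta,\boldsymbol{W})]$.

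The first step is to check that $G$ and $\boldsymbol{W}$ satisfy the hypotheses of Theorem~\ref{prop:var_strat} with dimension $q$.
\begin{itemize}
\item Assumption~\ref{ass:smooth} holds for $G$, because $G$ is a composition of $F$ with a linear map: it is continuously differentiable in $\boldsymbol{W}$, and its variance equals that of $F$.
\item Assumption~\ref{ass:map} holds with the product map $\mu=(\mu_{Y_1},\ldots,\mu_{Y_{\tilde{q}}},\mu_{Z_1},\ldots,\mu_{Z_r})$ of the marginal CDFs. By independence this map is exactly the Rosenblatt transformation, and its inverse has diagonal Jacobian.
\item Assumption~\ref{ass:strata} concerns only the stratification of $(0,1]^q$, so it is unaffected.
\end{itemize}
The independence hypothesis used in the last four cases holds by construction. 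The tail hypotheses are the ones imposed on the factors, i.e. each $Y_i$ and $Z_m$ is \ref{eq:SE} or \ref{eq:SG} as the case requires. This is how I read ``the four specifications of $\mu$'' in the statement.

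The second step is to show that $G$ inherits the growth class of $F$ with the same type of parameters. There are two cases.
\begin{itemize}
\item If $F$ is \ref{eq:PO}, then each $X_j^{a_j}=(Y_j+\sum_m\beta_{j,m}Z_m)^{a_j}$ expands, via the multinomial theorem and the bound $|\prod_i w_i^{e_i}|\le \sum_i |w_i|^{\sum e}$ (weighted AM--GM), into $\mcO(\sum_i |W_i|^{a_{\max}})$. Hence $G$ is $\PO$ in $\boldsymbol{W}$ with maximal exponent $a_{\max}$ unchanged.
\item If $F$ is \ref{eq:EX}, then $\sum_j b_jX_j=\sum_j b_jY_j+\sum_m(\sum_j b_j\beta_{j,m})Z_m$, so $G$ is $\EX$ in $\boldsymbol{W}$ with parameter $\tilde{\bsb}=(b_1,\ldots,b_{\tilde{q}},\sum_j b_j\beta_{j,1},\ldots,\sum_j b_j\beta_{j,r})$.
\end{itemize}
Applying Theorem~\ref{prop:var_strat} to $(G,\boldsymbol{W})$ with $q=\tilde{q}+r$ then gives \eqref{eq:order_mu_subexp_h_power}--\eqref{eq:order_mu_subgaussian_h_exp} with $q$ replaced by $\tilde{q}+r$.

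The main obstacle is the exponential case. The effective rate $\tilde{b}_{\max}$ may differ from $b_{\max}$, and the condition $\kappa_{\min}>2b_{\max}$ must hold for the factor parameters, since it is what keeps $\var(G)$ bounded. I would therefore state that the orders are ``the same'' with $b_{\max}$ read as $\tilde{b}_{\max}$, together with the corresponding $\kappa_{\min}>2\tilde{b}_{\max}$. If the coefficients satisfy $\sum_j b_j\beta_{j,m}\le b_{\max}$, for example when the $\beta$'s are normalized, the original constant applies directly.

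A minor point needs care. In the proof of Theorem~\ref{prop:var_strat}, the count of strata whose variance is $\mcO(1)$, namely $\mcO(\ell^{(q-1)/q})$, uses only the diagonal structure of $\nabla\mu^{-1}$ and the tail of each margin. Both are preserved here, so the counting argument transfers verbatim.
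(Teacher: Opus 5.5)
Your proposal is correct and follows essentially the route the paper has in mind. The corollary is stated without a separate proof, but the preceding remark (that $\XX$ can be sampled using only the unconditional distributions of the mutually independent factors) is exactly your reduction: stratify $(Y_1,\ldots,Y_{\tilde{q}},Z_1,\ldots,Z_r)$ in $(0,1]^{\tilde{q}+r}$ and apply Theorem~\ref{prop:var_strat} to $F(\bftheta,A\,\cdot\,)$. Your point that in the exponential cases $b_{\max}$ and $\kappa_{\min}$ must be read at the factor level, with $\tilde{b}_{\max}$ built from the coefficients $\sum_j b_j\beta_{j,m}$, is a correct precision that the statement leaves implicit.
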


In the following corollary we remark that a suitable truncation of a well-behaved density function defined on $\RR^q$ can make the order of magnitude of variance be as in  \eqref{eq:order_bounded}, i.e., the lowest order of magnitude among the analyzed specifications of $F$ and $\mu$.
%\magenta{As it will be clear in the next section, the sample size should grow with the iterations in order to guarantee an almost sure convergence of the algorithm. Thus, ...}

\begin{corollary}\label{cor:truncation}
    Let $\Psi$ be a cumulative distribution function defined on an unbounded support $\mcX\subseteq\RR^q$ and $\psi$ be the corresponding probability density function.
    Let $\mu:\mcX\to\mcU$ be a Rosenblatt transformation satisfying Assumption \ref{ass:map}, where $\mcU$ is the $q$-dimensional unit hypercube open in at least one of its boundaries, and let $\|\nabla \mu^{-1}\|$ be bounded on $\mcU$.
    Let $\tilde{\Psi}$ be a cumulative distribution function with density function $\tilde{\psi}$ satisfying 
    \begin{align*}
    & \tilde{\psi}(\xx) = \frac{\psi(\xx)}{\Psi(\bsb)-\Psi(\bsa)} > 0, 
    && \text{for all $\xx\in[\bsa,\bsb]^q\subset\mcX$},
    \\
    & \tilde{\psi}(\xx) =0, 
    && \text{elsewhere},
    \end{align*}
    and let $\tilde{\mu}:[\bsa,\bsb]^q\to [0,1]^q$ be the corresponding Rosenblatt transformation.
    Then, $\|\nabla \tilde{\mu}^{-1}\|$ is bounded on $\uu\in[0,1]^q$.
\end{corollary}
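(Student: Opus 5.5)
The plan is to write $\tilde{\mu}^{-1}$ explicitly through the Rosenblatt construction. Then bound each partial derivative by composing the chain rule with the inverse function theorem. For the truncated law on $[\bsa,\bsb]^q$, the $i$-th conditional CDF is
\[
\tilde{\mu}_i(x_i\mid x_1,\ldots,x_{i-1}) = \frac{\Psi_i(x_i\mid x_{1:i-1}) - \Psi_i(a\mid x_{1:i-1})}{\Psi_i(b\mid x_{1:i-1}) - \Psi_i(a\mid x_{1:i-1})},
\]
where $\Psi_i(\cdot\mid\cdot)$ is the conditional CDF of $\psi$ restricted to the truncation box. It equals $\mu_i$ up to an affine rescaling that depends smoothly on the preceding coordinates.

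\textbf{Step 1: triangular Jacobian.} The map $\tilde{\mu}$ has a lower-triangular Jacobian. Its diagonal entries are the conditional densities $\tilde{\psi}_i(x_i\mid x_{1:i-1})$.

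\textbf{Step 2: lower bound on the diagonal.} I will argue that these diagonal entries are bounded below by a positive constant on the compact set $[\bsa,\bsb]^q$. The hypothesis $\tilde{\psi}>0$ there, together with continuity of $\psi$ (inherited from $\mu$ being $C^1$ with invertible Jacobian a.e., as in Assumption~\ref{ass:map}), gives a positive minimum by compactness. The normalizing denominators $\Psi_i(b\mid\cdot)-\Psi_i(a\mid\cdot)$ are likewise bounded away from zero, since the conditional mass of a nondegenerate interval with positive density is positive and continuous in the conditioning variables.

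\textbf{Step 3: bounded off-diagonal entries.} The off-diagonal entries $\partial\tilde{\mu}_i/\partial x_j$ for $j<i$ are derivatives of ratios of continuous functions with nonvanishing denominators over a compact set. Hence they are bounded above. This uses the boundedness of $\nabla\mu^{-1}$ on $\mcU$, i.e. $\nabla\mu$ is nondegenerate, and the differentiability of the conditional CDFs.

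\textbf{Step 4: invert.} By the inverse function theorem, $\nabla\tilde{\mu}^{-1}(\uu) = [\nabla\tilde{\mu}(\tilde{\mu}^{-1}(\uu))]^{-1}$. For a triangular matrix, the inverse is computed by back-substitution. Its entries are polynomials in the off-diagonal entries divided by products of diagonal entries. So Steps 2 and 3 give a uniform bound on $\|\nabla\tilde{\mu}^{-1}\|$ over $[0,1]^q$, including the faces: the closed box is mapped onto the closed cube, so none of the blow-up at $u\to0,1$ seen in the proof of Theorem~\ref{prop:var_strat} can occur.

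\textbf{Main obstacle.} I expect Step 2 to be the main difficulty, namely a uniform positive lower bound on the conditional densities up to the boundary of the truncation box. Positivity of $\tilde{\psi}$ on the closed set plus continuity suffices. The continuity of $\psi$, however, must be extracted from the smoothness in Assumption~\ref{ass:map}, which only holds almost everywhere. I would handle this by assuming (or arguing from the stated $C^1$ regularity) that $\psi$ is continuous on $[\bsa,\bsb]^q$, and otherwise restricting to the full-measure set where the derivatives exist.
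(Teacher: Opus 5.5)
Your strategy is the paper's: apply the inverse function theorem to the Jacobian of the Rosenblatt map and control that Jacobian through the conditional densities. Steps 1, 2 and 4 repair two weaknesses of the paper's three-line proof, which takes the Jacobian to be diagonal and infers a bounded inverse from mere pointwise positivity of $\psi$.

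The genuine gap is Step 3. For the truncated law, write $g_i(x_{1:i}):=\int_{[a,b]^{q-i}}\psi(x_{1:i},\bsy)\,d\bsy$. Then
\[
\tilde{\mu}_i(x_i\mid x_{1:i-1})=\frac{\int_a^{x_i} g_i(x_{1:i-1},t)\,dt}{\int_a^{b} g_i(x_{1:i-1},t)\,dt},
\]
so bounding $\partial\tilde{\mu}_i/\partial x_j$ for $j<i$ requires $g_i$ to be $C^1$ in $x_{1:i-1}$ up to the boundary of the box. In practice this means continuous first partials of $\psi$ on $[\bsa,\bsb]^q$. Neither of your justifications supplies this:
\begin{itemize}
\item Derivatives of ratios of continuous functions need not be bounded.
\item Nondegeneracy of $\nabla\mu$ controls its smallest singular value, which is the kind of input Step 2 uses. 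It says nothing about an upper bound on off-diagonal entries.
\end{itemize}
You also cannot transfer bounds from $\nabla\mu$. Because $g_i$ integrates the later coordinates over $[a,b]^{q-i}$ rather than over the whole support, $\tilde{\mu}_i$ is an affine rescaling of $\mu_i$ only when $i=q$ or the coordinates are independent. This contradicts your opening remark. Finally, the hypothesis on $\nabla\mu^{-1}$ can only mean pointwise finiteness. A uniform bound on the convex set $\mcU$ would make a $C^1$ map $\mu^{-1}$ Lipschitz, and then $\mcX$ would be bounded. So that hypothesis yields nonsingularity of $\nabla\mu$ and nothing quantitative.

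The fix is to state the missing regularity as a hypothesis, alongside the continuity you already flag for Step 2. Assume $\psi$ is positive with continuous first partial derivatives on the closed box. Then $g_i$, $\partial_{x_j}g_i$ and the normalizers are continuous on a compact set with positive denominators, so Step 3 holds and Step 4 closes the argument. The paper's proof silently needs the same regularity, as well as the compactness step you supply.

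Alternatively, restrict to independent coordinates, which is the setting of the paper's experiments (a one-dimensional truncated Gaussian). There $\nabla\tilde{\mu}$ is diagonal with entries $\psi_i(x_i)/(\Psi_i(b_i)-\Psi_i(a_i))$. Step 3 then disappears, and continuity and positivity of the marginal densities on $[a_i,b_i]$ suffice.
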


\begin{proof}
    Note that $ \nabla \tilde{\mu}^{-1}(\uu) = [J_{\mu}(\xx)]^{-1} |_{\xx=\mu^{-1}(\uu)}$,
    where $J_{\mu}(\xx) := \textrm{diag} (\psi_1(x_1), \ldots,\psi_q(x_q|x_{1:q-1}))$.
    Since $\psi(\xx)>0, \ \forall \xx$, then we also have $\psi_1(x_1), \ldots,\psi_q(x_q|x_{1:q-1})>0, \ \forall \xx$.
    Thus, $\|\nabla \tilde{\mu}^{-1}\|$ is bounded on any $\uu\in[0,1]^q$.
%\hfill\halmos
\end{proof}

\subsection{Convergence}
\noindent
In this section we establish the convergence of the SASTRO-DF algorithm.
To this purpose, we analyze the limiting behavior of the error between the stratified sampling estimator and the objective function, defined as
\begin{equation}\label{eq:error_OF}
    E\left(\bftheta_k^{(i)}, N_k^{(i)}\right) := \tilde{f} \left(\bftheta_k^{(i)},N_k^{(i)}\right) - f(\bftheta_k^{(i)}),
    \quad k\geq0,
    \quad i=0,1,\ldots,p+1.
\end{equation}
We show that by a suitable choice of the adaptive sampling rule, the error is bounded by a term $\mcO(\Delta_k^2)$ for large $k$ almost surely; this will be a core ingredient to show the almost sure convergence of the algorithm as shown later.

%in such a way that the trust region model is \emph{stochastically fully linear} (in the sense of Definition 2.3 \citealp{ha2025iteration}) with respect to $f$, which in turn assures the convergence of our algorithm.

%Recall from  \eqref{eq:var_fTilde} that the 

The general expression that we adopt for the adaptive sampling rule is, for a given trust-region radius $\Delta_k$,
\begin{equation}\label{eq:adaptive_sampling_rule}
   N_k^{(i)}=\min\left\{ n\in\mcN_q, \ n\geq\lambda_k \ : \ \sqrt{\varHat_0(\tilde{f}(\bftheta_k^{(i)},n) \mid \mcF_{k-1})} \leq \frac{\kappa_{as} \Delta_k^{\gamma}}{\sqrt{\lambda_k}}
   \right\},
   \quad k\geq0,
   \quad i=0,1,\ldots,p+1,
\end{equation}
where
\begin{align}\label{eq:varHat_fTilde}
    \varHat_0\left(\tilde{f}(\bftheta_k^{(i)}, n)\mid \mcF_{k-1}\right) 
    & = \frac{1}{n} \max\left(\sigmamin^2, \frac{1}{l}\sum_{\mfu\in\mfU} \hat{\sigma}_{\mfu}^2 \left(\bftheta_k^{(i)}, \nbar\right) \right),
\end{align}
where $\hat{\sigma}^2_{\mfu} (\bftheta_k^{(i)}, \nbar)$ is the sample variance of $F(\bftheta_k^{(i)}, \XX \mid \XX\in\mfu)$ conditioned to $\mcF_{k-1}$ and estimated with $\nbar$ points, $\sigmamin^2$ is an arbitrary small constant acting as a lower bound of the sample variance (to avoid early stopping due to a probable largely underestimated $\sigma_{\mfu}^2$), $\mcN_q$ is defined in  \eqref{eq:set_N}, and $\kappa_{as}>0$ is an arbitrary constant.
Note that for $\sigmamin^2=0$,  \eqref{eq:varHat_fTilde} is simply the variance of the stratified sampling estimator (see Chapter 4.3 in \citealp{glasserman2004monte}) under Assumption \ref{ass:strata}.

For ease of exposition, in this section we focus on the sample sizes $N_k:=N_k^{(0)},k\geq0,$ associated with the center points of the trust regions, but the statements remain valid even when the sample sizes of the other interpolation and candidate points are considered.

The next theorem establishes the (optimal) values of $\lambda_k$ and $\gamma$ in  \eqref{eq:adaptive_sampling_rule} that yield that the stochastic error is upper-bounded by a term of order $\mcO(\Delta_k^2)$ for large $k$ almost surely.
In the interest of generality, in the following we only rely on Assumption \ref{ass:smooth} (bounded variance of $F(\cdot,\XX)$), which is sufficient to bound the stochastic error through Chebyshev inequality (see also Lemma 5.1 in \citealp{shashaani2018astro} for a similar application).
This marks a difference with respect to the results of \citep{ha2025complexity}, where $F(\cdot,\XX)$ is assumed to have a sub-exponential tail behavior and the associated bound is computed by means of Bernstein-type inequalities.

\begin{theorem}\label{prop:stochastic_error}
Let Assumptions \ref{ass:smooth} and \ref{ass:strata} hold and let the adaptive sampling rule be set as in  \eqref{eq:adaptive_sampling_rule}.
Let $\XX$ and $F$ be as in the optimization problem \eqref{eq:opt}.
Let $\|\nabla \mu^{-1}\|<\infty, \ \forall \uu\in(0,1)^q$.
Set
\begin{subnumcases}{}
    \lambda_k = k^{\frac{(1+\delta)q}{q+2}},
    \quad \gamma = \frac{2q}{q+2}, 
    & \text{if \ $\|\nabla\ \mu^{-1}\|<\infty, \ \forall \uu\in[0,1]^q$}
    \label{eq:lambda_gamma}
    \\
    \lambda_k = k^{\frac{(1+\delta)q}{q+1}},
    \: \gamma = \frac{(4+\varrho)q}{2(q+1)}, 
    & \text{if \ $\mu_i$ \ is \ref{eq:SE} $\forall i=1,\ldots,q$, and $F$ is \ref{eq:PO}}
    \label{eq:lambda_gamma_mu_subexp_h_power}
    \\
    \lambda_k = k^{\frac{(1+\delta)q\kappa_{\min}}{(q+1)\kappa_{\min}-2b_{\max}}},
    \quad \gamma = \frac{2q\kappa_{\min}}{(q+1)\kappa_{\min}-2b_{\max}}, 
    & \text{if \ $\mu_i$ \ is \ref{eq:SE} $\forall i=1,\ldots,q$, and $F$ is \ref{eq:EX}}
    \label{eq:lambda_gamma_mu_subexp_h_exp}
    \\
    \lambda_k = k^{\frac{(1+\delta)q}{q+1}},
    \: \gamma = \frac{(4+\varrho)q}{2(q+1)}, 
    & \text{if \ $\mu_i$ \ is \ref{eq:SG} $\forall i=1,\ldots,q$, and $F$ is \ref{eq:PO}}
    \label{eq:lambda_gamma_mu_subgaussian_h_power}
    \\
    \lambda_k = k^{\frac{(1+\delta)q}{q+1}},
    \: \gamma = \frac{(4+\varrho)q}{2(q+1)},
    & \text{if \ $\mu_i$ \ is \ref{eq:SG} $\forall i=1,\ldots,q$, and $F$ is \ref{eq:EX}}
    \label{eq:lambda_gamma_mu_subgaussian_h_exp}
\end{subnumcases}
for all $\delta, \varrho > 0$, and where $\kappa_{\min}=\min_{i}\{\kappa_i\}$,
$b_{\max}=\max_{i}\{b_i\}$, $\kappa_{\min}>2b_{\max}$,
with $\kappa_i, b_i, i=1,\ldots,q,$ being described in Definitions \ref{def:mu_subexp_subgaussian}-\ref{def:F_poly_exp}.
Assume also that in cases \eqref{eq:lambda_gamma_mu_subexp_h_power}-\eqref{eq:lambda_gamma_mu_subgaussian_h_exp} it holds that $X_i\perp X_j, \forall i\neq j$.
%, and that \magenta{$\Delta_k\leq\mcO(1)$, for $k\to\infty$}.
Then, it holds that
% \begin{equation}\label{eq:borel_cantelli}
% \mbP\{ |E_k(\bftheta_k, N_k)| \geq \kappa_{fde} \Delta_k^2 \quad  \text{i.o.} \} = 0.
% \end{equation}
% KEEP
\begin{equation}\label{eq:borel_cantelli}
    \mbP\left\{ \limsup_{k\to\infty} | E(\bftheta_k,N_k) | \geq \kappa_{fde} \Delta_k^2 \right\} = 0,
    \quad \forall \kappa_{fde}>0.
\end{equation}

%\giovanni{}
\end{theorem}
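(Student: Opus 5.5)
The plan is a Borel--Cantelli argument built on a conditional Chebyshev bound. The order of the stratified variance comes from Theorem \ref{prop:var_strat}, and the size of the sample comes from the stopping rule \eqref{eq:adaptive_sampling_rule}. Write $E_k:=E(\bftheta_k,N_k)$. Conditionally on $\mcF_{k-1}$, $\bftheta_k$ and $\Delta_k$ are fixed (Remark \ref{rem:filtration}), so Chebyshev's inequality gives
\begin{equation*}
\mbP\left( |E_k| \geq \kappa_{fde}\Delta_k^2 \mid \mcF_{k-1}\right) \leq \frac{\mbE\left[ \var(\tilde f(\bftheta_k,N_k)\mid \mcF_{k-1},N_k)\mid\mcF_{k-1}\right]}{\kappa_{fde}^2\Delta_k^4}.
\end{equation*}
The aim is to show that the right-hand side is bounded by a deterministic summable sequence of order $k^{-(1+\delta)}$. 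Then $\sum_k \mbP(|E_k|\geq \kappa_{fde}\Delta_k^2)<\infty$ after taking expectations, and the first Borel--Cantelli lemma gives \eqref{eq:borel_cantelli}.

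The first step lower-bounds $N_k$. Because $\varHat_0\geq \sigmamin^2/n$ and the rule requires $n\geq\lambda_k$, any $n$ that stops the rule must satisfy
\begin{equation*}
\sigmamin^2/n\leq \kappa_{as}^2\Delta_k^{2\gamma}/\lambda_k, \qquad\text{so}\qquad N_k\geq \max\left\{\lambda_k,\ \sigmamin^2\lambda_k \kappa_{as}^{-2}\Delta_k^{-2\gamma}\right\}.
\end{equation*}
This bound is deterministic given $\mcF_{k-1}$, and it sidesteps the randomness of the variance estimate.

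The second step inserts this into Theorem \ref{prop:var_strat}. In case \eqref{eq:order_bounded} the conditional variance is $\mcO(n^{-(q+2)/q})$ and is decreasing in $n$, so it is at most of order $(\lambda_k\Delta_k^{-2\gamma})^{-(q+2)/q}$. Dividing by $\Delta_k^4$ gives
\begin{equation*}
\lambda_k^{-(q+2)/q}\,\Delta_k^{2\gamma(q+2)/q-4}.
\end{equation*}
With $\gamma=2q/(q+2)$ the $\Delta_k$ exponent vanishes, and $\lambda_k^{-(q+2)/q}=k^{-(1+\delta)}$, which is summable. The remaining cases follow the same pattern with the rates of Theorem \ref{prop:var_strat}:
\begin{itemize}
\item In case \eqref{eq:order_mu_subexp_h_exp} the exponent is $\frac{(q+1)\kappa_{\min}-2b_{\max}}{q\kappa_{\min}}$, and the stated $\lambda_k,\gamma$ cancel $\Delta_k$ exactly.
\item In the log cases \eqref{eq:order_mu_subexp_h_power} and \eqref{eq:order_mu_subgaussian_h_power} the rate is $n^{-(q+1)/q}$ up to a polylog. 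The extra $\varrho$ in $\gamma$ leaves a positive power $\Delta_k^{\varrho(\cdot)}$, which is bounded because $\Delta_k\leq\Deltamax$. The logarithm is absorbed by borrowing an arbitrarily small part of the $k^{-\delta}$ slack, since $\log N_k$ is at most polylogarithmic in $k$ and $\Delta_k^{-1}$. Redefining $\delta$ slightly keeps the sum finite.
\item In case \eqref{eq:order_mu_subgaussian_h_exp} the factor $n^{c/\sqrt{\log n}}$ is sub-polynomial, $n^{o(1)}$, and is handled the same way.
\end{itemize}
In every case $\Delta_k$ must not enter with a negative net exponent. The choice of $\gamma$ guarantees this, and $\Delta_k\leq \Deltamax$ controls any positive leftover.

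The main obstacle is rigor at two points.

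The first is that $N_k$ is a stopping time whose value depends on the same samples that form $\tilde f$. The variance of $\tilde f(\bftheta_k,N_k)$ is therefore not literally the fixed-$n$ formula. I would handle this by bounding the error on the event $\{N_k=n\}$ through a union over $n\geq n_{\min,k}:=\lceil \sigmamin^2\lambda_k\kappa_{as}^{-2}\Delta_k^{-2\gamma}\rceil$. For any fixed $n$, Chebyshev applies to $\tilde f(\bftheta_k,n)$. Alternatively, I would use the fact that the stratified partial averages form a martingale in the number of per-stratum samples and apply a maximal (Kolmogorov) inequality over $n\geq n_{\min,k}$. That bound is still governed by the variance at $n_{\min,k}$, up to a constant, provided the per-stratum variances remain of the same order as strata are refined. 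Controlling this as $\ell$ changes with $n$ is the delicate part. If that fails, I would fall back on a crude union bound and spend part of the $\delta$ slack.

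The second point is uniformity. The $\mcO(\cdot)$ constants in Theorem \ref{prop:var_strat} must be uniform in $\bftheta$ and $k$. I would take this from Assumption \ref{ass:smooth} together with the bounded-gradient hypotheses, for instance through a uniform bound on $\Cmax$.
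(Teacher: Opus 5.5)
Your outline is the paper's proof: conditional Chebyshev, then the $\sigmamin^2$ floor in \eqref{eq:varHat_fTilde} forcing $N_k\geq\sigmamin^2\lambda_k\kappa_{as}^{-2}\Delta_k^{-2\gamma}$, the rates of Theorem~\ref{prop:var_strat}, the choice of $\gamma$ that cancels $\Delta_k$, $\Delta_k\leq\Deltamax$ to handle leftover positive powers, and Borel--Cantelli on $k^{-(1+\delta)}$. One small correction: the $\log\Delta_k^{-1}$ part of $\log n_{\min,k}$ is not absorbed by the $k^{-\delta}$ slack, because $\log\Delta_k^{-1}$ can grow linearly in $k$. It is killed by the leftover $\Delta_k^{\varrho}$, since $\Delta_k^{\varrho}[\log\Delta_k^{-1}]^a$ is bounded; only the $\log k$ part uses the slack.

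The gap is the step you flag yourself: applying a fixed-$n$ variance at the random $N_k$. Your first display is not a valid Chebyshev bound as written, because $\tilde f$ is generally biased conditionally on $N_k$. The paper bridges this step only by citation (Theorem~1 in \citealp{ghosh1979sequential}; see also Theorems 2.6--2.7 in \citealp{shashaani2018astro}), asserting that the sequential-sample variance has the order of \eqref{eq:var_fTilde}.

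Neither of your remedies closes this step.
\begin{itemize}
\item The union-bound fallback costs a polynomial factor, not a small slack. The admissible sizes are $n=\nbar m^q$, and
\[
\sum_{m\geq m_0}\bigl(\nbar m^q\bigr)^{-(q+2)/q}\asymp\nbar^{-1/q}\,n_{\min,k}^{-(q+1)/q},\qquad m_0\asymp\bigl(n_{\min,k}/\nbar\bigr)^{1/q}.
\]
In case \eqref{eq:lambda_gamma} the bound therefore becomes of order $k^{-(1+\delta)(q+1)/(q+2)}\Delta_k^{-4/(q+2)}$. This is not summable for $\delta\leq 1/(q+1)$, and it blows up as $\Delta_k\to0$. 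Capping $N_k$ at a constant multiple of $n_{\min,k}$ does not help, because that range still contains $\asymp n_{\min,k}^{1/q}$ admissible sizes.
\item The Kolmogorov route is not available as stated. $\tilde f(\bftheta_k,\nbar m^q)$ and $\tilde f(\bftheta_k,\nbar(m+1)^q)$ live on non-nested grids with exactly $\nbar$ points per cell, so there is no martingale in $m$.
\end{itemize}

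In case \eqref{eq:lambda_gamma} you can close the step rigorously without any sequential-analysis result. Let $G$ bound $\|\nabla_{\uu}F(\bftheta,\mu^{-1}(\uu))\|$ uniformly; this is the uniformity you already need. Then $F$ oscillates by at most $G\sqrt{q}\,\ell^{-1/q}$ on each stratum. By Popoviciu's inequality, every $\hat\sigma^2_{\mfu}$ with $\nbar\geq2$ is at most $qG^2\ell^{-2/q}/2$, deterministically. Once $\lambda_k\geq\nbar\,\bigl(qG^2/(2\sigmamin^2)\bigr)^{q/2}$, the maximum in \eqref{eq:varHat_fTilde} equals $\sigmamin^2$ for every candidate $n$. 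Then
\[
N_k=\min\bigl\{n\in\mcN_q:\ n\geq\max\bigl(\lambda_k,\ \sigmamin^2\lambda_k\kappa_{as}^{-2}\Delta_k^{-2\gamma}\bigr)\bigr\}
\]
is $\mcF_{k-1}$-measurable, and fixed-$n$ Chebyshev applies verbatim.

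In the unbounded cases the tail strata have sample variances that are not deterministically small. There you would need one of two things:
\begin{itemize}
\item a summable bound on the probability that $N_k$ differs from this deterministic value, which calls for moment control on the stratum sample variances beyond Assumption~\ref{ass:smooth}; or
\item the sequential-analysis result the paper cites.
\end{itemize}
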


\begin{proof}
As usual we split the proof into the case in which $\XX$ has bounded support \eqref{eq:lambda_gamma} and the case of unbounded support \eqref{eq:lambda_gamma_mu_subexp_h_power}-\eqref{eq:lambda_gamma_mu_subgaussian_h_exp}.
\begin{itemize}
\item[]
\textbf{Case \eqref{eq:lambda_gamma}:} By using the law of total probability and Chebyshev's inequality, for any $\kappa_{fde}>0$ we have
\begin{align*}
    \mbP\{ |E(\bftheta_k, N_k)| \geq \kappa_{fde} \Delta_k^2 \mid \mcF_{k-1}\}
    \leq \frac{\mbE[E^2(\bftheta_k, N_k)\mid \mcF_{k-1}]}{\kappa_{fde}^{2} \Delta_k^{4}} 
    = \frac{\text{Var}(\tilde{f}(\bftheta_k,N_k)\mid \mcF_{k-1})}{\kappa_{fde}^2 \Delta^4_k},
\end{align*}
where the last equivalence holds because the estimator $\tilde{f}(\bftheta_k,N_k)$ is assumed to be unbiased.
Given the order of magnitude of the variance of  \eqref{eq:order_bounded} and the sampling rule \eqref{eq:adaptive_sampling_rule}, we have that
\begin{align}
    \frac{\text{Var}(\tilde{f}(\bftheta_k,N_k)\mid \mcF_{k-1})}{\kappa_{fde}^2 \Delta^4_k} 
    & \lesssim \frac{N_k^{-(q+2)/q}\: \nbar^{2/q} \Cmax}{\kappa_{fde}^2 \Delta_k^4}.
    %\label{eq:first}
    \notag
    \\
    & \leq \frac{\nbar^{2/q} \Cmax\ \kappa_{as}^{2(q+2)/q} \sigmamin^{-2(q+2)/q}}{\kappa_{fde}^2} \lambda_k^{-(q+2)/q},
    \label{eq:second}
\end{align}
where we have selected $\gamma=2q/(q+2)$ in such a way that the trust-region radius is removed from the equation.
Note that, following Theorem 1 in \citep{ghosh1979sequential}, the variance of the estimator with a given sample size \eqref{eq:var_fTilde} has the same order of magnitude of the variance of the estimator under a sequential sampling scheme (see also Theorems 2.6-2.7 in \citealp{shashaani2018astro}).

In order to prove \eqref{eq:borel_cantelli}, by Borel-Cantelli lemma it is sufficient to prove that the right-hand side of  \eqref{eq:second} is summable in $k$.
Following the choice of $\lambda_k$ in  \eqref{eq:lambda_gamma}, it holds that
\begin{align*}
\sum_{k=1}^{\infty} \lambda_k^{-(q+2)/q}
& \leq \sum_{k=1}^{\infty} k^{-(1+\delta)} < \infty,
\quad \forall\delta>0.
%\label{eq:summable_1}
\end{align*}

\item[]
\textbf{Cases \eqref{eq:lambda_gamma_mu_subexp_h_power}-\eqref{eq:lambda_gamma_mu_subgaussian_h_exp}:} \eqref{eq:lambda_gamma_mu_subexp_h_exp} is proven similarly to the previous point.
Note that
\begin{align*}
    \frac{\text{Var}(\tilde{f}(\bftheta_k,N_k)\mid \mcF_{k-1})}{\kappa_{fde}^2 \Delta^4_k}
    & \lesssim \frac{1}{\Delta_k^4} (\lambda_k \Delta^{-2\gamma})^{-((q+1)\kappa_{\min}-2b_{\max})/(q\kappa_{\min})}.
\end{align*}
By setting $\lambda_k,\gamma$ as in \eqref{eq:lambda_gamma_mu_subexp_h_exp}, it can be observed that terms depending on $\Delta_k$ cancel out and we only remain with a summable term $k^{-(1+\delta)}, \ \delta>0$.

To prove  \eqref{eq:lambda_gamma_mu_subexp_h_power}, we note that, for $a_{\max}\in\mathbb{N}$,
\begin{align*}
    \frac{\text{Var}(\tilde{f}(\bftheta_k,N_k)\mid \mcF_{k-1})}{\kappa_{fde}^2 \Delta^4_k}
    & \lesssim \frac{N_k^{-(q+1)/q} [\log(N_k)]^{2(a_{\max}-1)}}{\Delta_k^4}
    \\
    & \sim \frac{1}{\Delta_k^4} (\lambda_k \Delta_k^{-2\gamma})^{-(q+1)/q} \ [\log(\lambda_k \Delta_k^{-1})]^{2(a_{\max}-1)}.
\end{align*}
Set $\lambda_k, \gamma$ as in  \eqref{eq:lambda_gamma_mu_subexp_h_power}, then
\begin{align*}
    \mbP\{ |E(\bftheta_k, N_k)| \geq \kappa_{fde} \Delta_k^2 \mid \mcF_{k-1}\}
    & \lesssim k^{-(1+\delta)} \Delta_k^{\varrho}\ \left[ \log(k) + \log(\Delta_k^{-1}) \right]^{2(a_{\max}-1)}.
\end{align*}
The right-hand side is summable in $k$ if (i) $k^{-(1+\delta)} \log(k)^{a}$ is summable for all $a\in\mathbb{N}, \ \delta>0$,
and (ii) $(\Delta_k^{-1})^{-\varrho}\ [\log(\Delta_k^{-1})]^{a} \leq \mcO(1)$ for $k\to\infty$ and for all $a\in\mathbb{N}, \ \varrho>0$.
(i) is easily verified and (ii) holds because $\Delta_k \leq \Deltamax= \mcO(1), \forall k$.
 \eqref{eq:lambda_gamma_mu_subgaussian_h_power} is proven with the same considerations.

To prove  \eqref{eq:lambda_gamma_mu_subgaussian_h_exp}, by setting $\lambda_k$ and $\gamma$ as in the equation we note that
\begin{align}
    \frac{\text{Var}(\tilde{f}(\bftheta_k,N_k)\mid \mcF_{k-1})}{\kappa_{fde}^2 \Delta^4_k}
    & \lesssim
    \frac{
    k^{-(1+\delta)} \Delta_k^{\varrho}
    \exp\left\{ b_{\max} \xi_{\max} \sqrt{8/q} \sqrt{\log\left( k^{\frac{(1+\delta)q}{q+1}} \Delta_k^{-\frac{(4+\varrho)q}{2(q+1)}} \right)} \right\}
    }{
    \log\left(k^{\frac{(1+\delta)q}{q+1}} \Delta_k^{-\frac{(4+\varrho)q}{2(q+1)}} \right)
    }
    \notag
    \\
    & \lesssim
    \frac{
    k^{-(1+\delta)} \Delta_k^{\varrho}
    \exp \left\{\hat{a}\sqrt{\log(k)} \right\}
    \exp \left\{\hat{b}\sqrt{\log(\Delta_k^{-1})} \right\}
    }{
    \log(k) + \log(\Delta_k^{-1})
    }
    \quad \text{for some $\hat{a},\hat{b}>0$}
    \notag
    \\
    & =
    \frac{
    k^{ -(1+\delta)+ \hat{a}/\sqrt{\log(k)}}
    }{
    \log(k) + \log(\Delta_k^{-1})
    }
    \: \Delta_k^{ \varrho + \hat{b}/\sqrt{\log(1/\Delta_k)}}.
    \label{eq:here}
\end{align}
In the left term, the numerator is summable because for all $k>\mre^{(\hat{a}/\delta)^2}$, we get an infinite sum of the form $\sum_{k=k_0}^{\infty} k^{-\tilde{a}}, \ \tilde{a}>1$.
As $\Delta_k\leq \mcO(1), \forall k$,
the denominator of the left term in \eqref{eq:here} does not affect the summability of the whole fraction.
The right term in \eqref{eq:here} has order lower or equal to $\mcO(1)$ because the exponent is strictly positive and $\Delta_k\leq \mcO(1), \forall k$; thus, the whole term is summable.
\end{itemize}
%\hfill\halmos
\end{proof}

Note that the choices of $\lambda_k$ and $\gamma$ could have been formulated as inequalities. For example, under the assumptions that led to  \eqref{eq:lambda_gamma}, we may set
\begin{align}
    & \lambda_k \geq k^{\frac{(1+\delta)q}{q+2}},
    \quad \gamma \geq \frac{2q}{q+2},
    \quad\forall\delta>0,
\end{align}
without affecting the results of Theorem \ref{prop:stochastic_error}.
In fact,  \eqref{eq:lambda_gamma} only represent the values of $\lambda_k$ and $\gamma$ that make the total number of samples more parsimonious, at the expense of a possibly larger number of iterations.
However, the regulation of these two opposite forces can be carried out, e.g., by a suitable choice of the scaling constant $\kappa_{as}$ reported in  \eqref{eq:adaptive_sampling_rule}.

Note also that following Theorem \ref{prop:stochastic_error}, it can be easily verified that under a no-stratification strategy, in order to ensure almost sure convergence we would need to set (at least)
\begin{align}\label{eq:lambda_gamma_NS}
    & \lambda_k^{\textrm{NS}} = k^{1+\delta},
    \quad \gamma^{\textrm{NS}} = 2,
    \quad \forall\delta>0.
\end{align}
These values are greater than the values of $\lambda_k$ and $\gamma$ under all specifications of $F$ and $\mu$ as reported in \eqref{eq:lambda_gamma}-\eqref{eq:lambda_gamma_mu_subgaussian_h_exp}, which, as detailed in the next section, allows our dynamic stratification to require less number of function evaluations to reach a given stationary point with respect to a pure random sampling strategy.

\begin{lemma}\label{lem:successful_iteration}
(Lemma 4.4, \citealp{ha2025complexity})
    There exists a finite random iteration $K_{\rho}$ such that, for all $k\geq K_{\rho}$, the inequality $\Delta_k\leq \kappa_{dum} \| \nabla M_k \|$, for finite $\kappa_{dum}>0$, implies a successful iteration almost surely.
    Thus,
    \begin{align}\label{eq:DeltaGreaterThanG}
        \Delta_k \geq \mcO(\|\nabla M_k\|),
        \quad \forall k\geq K_{\rho},
        \quad \text{a.s.}
    \end{align}
\end{lemma}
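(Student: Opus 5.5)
Our plan is to reproduce the argument of Lemma 4.4 in \citep{ha2025complexity}, using Theorem \ref{prop:stochastic_error} in place of the Bernstein-type error control used there. We would work on the probability-one event on which Theorem \ref{prop:stochastic_error} holds at the center and at the candidate point; as remarked before that theorem, it also holds for the other interpolation and candidate points. On this event there is a finite random iteration $K_E$ such that, for all $k\geq K_E$,
\[
|E(\bftheta_k,N_k)|\leq \kappa_{fde}\Delta_k^2 \quad\text{and}\quad |E(\bftheta_k^s,N_k^s)|\leq \kappa_{fde}\Delta_k^2 .
\]
Here $\kappa_{fde}>0$ is a constant we may choose small. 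The random iteration $K_\rho$ will be taken as $K_E$, possibly enlarged to handle the model-quality error terms.

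The first step is to show that, for $k\geq K_E$, the model $M_k$ is fully linear with respect to $f$ on $\mcB_k$, with constants independent of $k$. For the DQ-SPIM this follows from standard interpolation error bounds. The deterministic part of the error is $\mcO(\Delta_k)$ in the gradient and $\mcO(\Delta_k^2)$ in the function value, by Assumption \ref{ass:lipschitz}. The stochastic part comes from the interpolation-point errors, which are $\mathcal{O}(\kappa_{fde}\Delta_k^2)$ by Theorem \ref{prop:stochastic_error}. Multiplying by the inverse of $\mathcal{M}(\Phi^{\text{DQ}},\Theta_k^{\text{DQ}})$, whose gradient rows scale like $\Delta_k^{-1}$, these contribute $\mcO(\kappa_{fde}\Delta_k)$ to the gradient error and $\mcO(\kappa_{fde}\Delta_k^2)$ to the function-value error.

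The second step is to show that $\rho_k\geq\eta$. The candidate $\bftheta_k^s$ is assumed to satisfy the standard Cauchy decrease condition,
\[
M_k(\bftheta_k)-M_k(\bftheta_k^s)\geq \tfrac{\kappa_{fcd}}{2}\|\nabla M_k\|\min\{\|\nabla M_k\|/\|\nabla^2M_k\|,\Delta_k\}.
\]
Under the hypothesis $\Delta_k\leq\kappa_{dum}\|\nabla M_k\|$, and with the model Hessian bounded, this gives a predicted decrease of at least $c\,\Delta_k^2/\kappa_{dum}$. We then decompose
\[
|\rho_k-1|\leq \frac{|\tilde f(\bftheta_k,N_k)-M_k(\bftheta_k)|+|\tilde f(\bftheta_k^s,N_k^s)-M_k(\bftheta_k^s)|}{M_k(\bftheta_k)-M_k(\bftheta_k^s)} .
\]
Each numerator term splits into the estimation error $E$, which is at most $\kappa_{fde}\Delta_k^2$, and the model-to-$f$ error, which is at most $\kappa_{ef}\Delta_k^2$ by full linearity. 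Hence $|\rho_k-1|\leq 2(\kappa_{fde}+\kappa_{ef})\kappa_{dum}/c$. Choosing $\kappa_{dum}$ small enough, with $\kappa_{fde}$ arbitrary by Theorem \ref{prop:stochastic_error}, makes this at most $1-\eta$, so $\rho_k\geq\eta$. If in addition $\kappa_{dum}\leq\tilde\eta$, the second condition in \eqref{eq:trust_region_update} also holds, so the iteration is successful and $\Delta_{k+1}=\min(\ovgamma\Delta_k,\Deltamax)$.

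The third step derives \eqref{eq:DeltaGreaterThanG} by the usual induction on the radius. Suppose $\Delta_k<\ungamma\kappa_{dum}\|\nabla M_k\|$ for some large $k$. Then the iteration is successful and the radius grows. The radius can shrink only when $\Delta_k>\kappa_{dum}\|\nabla M_k\|$, and a shrink multiplies it by $\ungamma$. The model-gradient varies by at most $\mcO(\Delta_k)$ between consecutive iterates, by the Lipschitz gradient and full linearity. Combining these, we get $\Delta_k\geq \ungamma\kappa_{dum}\|\nabla M_k\|/(1+\mcO(1))$ for all $k\geq K_\rho$, after absorbing finitely many initial iterations into $K_\rho$.

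The main obstacle is this last induction step. When the iterate moves, $\|\nabla M_{k+1}\|$ can differ from $\|\nabla M_k\|$, so bounding the ratio $\Delta_k/\|\nabla M_k\|$ uniformly requires a careful argument. We would first try the route of \citep{ha2025complexity}: bound $\|\nabla M_k\|$ by $\|\nabla f(\bftheta_k)\|+\mcO(\Delta_k)$ and carry out the induction on $\Delta_k$ versus $\|\nabla f(\bftheta_k)\|$ instead. A second point is that Theorem \ref{prop:stochastic_error} must hold simultaneously at all $p+2$ points. This is immediate, since a finite intersection of almost sure events is almost sure.
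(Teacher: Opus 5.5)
Your Steps 1--2 are correct. They are the standard argument behind the cited Lemma~4.4: full linearity of the DQ-SPIM once all $p+2$ errors are below $\kappa_{fde}\Delta_k^2$, Cauchy decrease, and $|\rho_k-1|\lesssim\kappa_{dum}$. The paper itself gives nothing beyond the citation. You also read the statement correctly: $\kappa_{dum}$ must be a sufficiently small constant with $\kappa_{dum}\le\tilde\eta$. Indeed, whenever $\tilde\eta\|\nabla M_k\|<\Delta_k\le\kappa_{dum}\|\nabla M_k\|$, the second condition in \eqref{eq:trust_region_update} already forces an unsuccessful step. The Hessian bound you assume is in fact automatic here. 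The DQ-SPIM Hessian is diagonal with entries equal to second differences of the estimates divided by $\Delta_k^2$, so it is bounded by $L+4\kappa_{fde}$, where $L$ is the Lipschitz constant of $\nabla f$.

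The gap is in Step 3, i.e.\ the ``Thus'' part, which the paper also only asserts. The drift you single out as the main obstacle is harmless. With $\kappa_{eg}$ your full-linearity gradient constant, one has $\|\nabla M_{k+1}\|\le\|\nabla M_k\|+C\Delta_k$ with $C:=\kappa_{eg}(1+\ovgamma)+L$. Set
\[
c:=\min\Bigl\{\frac{\ungamma\kappa_{dum}}{1+C\kappa_{dum}},\ \frac{\ovgamma-1}{C}\Bigr\}.
\]
Then the bound $\Delta_k\ge c\|\nabla M_k\|$ propagates from $k$ to $k+1$:
\begin{itemize}
\item After an unsuccessful step, Step 2 gives $\|\nabla M_{k+1}\|<(1/\kappa_{dum}+C)\Delta_k\le\Delta_{k+1}/c$.
\item After a successful uncapped step, $\|\nabla M_{k+1}\|\le(1/c+C)\Delta_k\le\Delta_{k+1}/c$. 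This is where $c\le(\ovgamma-1)/C$ is needed. Your constant $\ungamma\kappa_{dum}/(1+\mcO(1))$ meets it once $\kappa_{dum}$ is shrunk, which Step 2 allows.
\end{itemize}

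What your sketch is missing is why this invariant region is ever entered, and why the cap $\Deltamax$ eventually stops binding. ``Absorbing finitely many initial iterations'' is exactly the unproved part. The missing ingredient is $\Delta_k\to0$ a.s. This is obtained in the proof of Theorem~\ref{cor:convergence_of_the_algo} from summability of $\Delta_{k_i}^2$ over successful iterations, without using this lemma. Once $\Delta_k<\Deltamax/\ovgamma$ for good, suppose $\Delta_k<c\|\nabla M_k\|$ held for all later $k$. Since $c<\kappa_{dum}$, every iteration would then be successful and $\Delta_k$ would grow geometrically, a contradiction. So $K_\rho$ can be taken as the first subsequent entry time.

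Your fallback route via $\|\nabla f(\bftheta_k)\|$ would only give $\Delta_k\ge\kappa_{l\Delta}\epsilon$ for $k<T_\epsilon$ (Lemma~4.5 of the cited work). That suffices for Lemma~\ref{lemma:iteration_complexity}. It does not suffice for Theorem~\ref{cor:convergence_of_the_algo}, which uses $\|\nabla M_k\|\le\mcO(\Delta_k)$ for all large $k$. With only the $\epsilon$-version you would get $\liminf\|\nabla f(\bftheta_k)\|=0$, not the full limit.
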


The next theorem shows that the SASTRO-DF converges almost surely to a first order stationary point, as a consequence of Theorem \ref{prop:stochastic_error} and Lemma \ref{lem:successful_iteration}.

% \begin{corollary}\label{cor:convergence_of_the_algo}
% Let $M_k: \mcB(\bftheta_k,\Delta)\to \mbR$ be a stochastic trust region model for the approximation of $\tilde{f}$ in the ball $\mcB(\boldsymbol{\bftheta_k},\Delta)=\{\bftheta\in\mbR^q: \| \bftheta-\bftheta_k \|_2\leq\Delta\}$ at iteration $k$.
% Let the true objective function $f$ have Lipschitz continuous gradient (Assumption \ref{ass:lipschitz}).
% Let $M_k$ be a fully linear model of the estimated objective function $\tilde{f}$ (see Definition 10.3 in \citealp{conn2009introduction}).
% Then $M_k$ is a stochastic fully linear model of $f$ in the trust region, that is,
% \begin{align}
%     & |f(\bftheta) - M_k(\bftheta)| \leq \kappa_f \Delta_k^2
%     \label{eq:SFL1}
%     \\
%     & \|\nabla f(\bftheta) - \nabla M_k(\bftheta)\| \leq \kappa_g \Delta_k
%     \label{eq:SFL2}
% \end{align}
% almost surely for $k\to\infty$, for some $\kappa_f,\kappa_g>0$, and for all $\bftheta\in\mathcal{B}(\bftheta_k,\Delta)$.
% It follows that
% \begin{equation}\label{eq:convergence}
%     \|\nabla f (\bftheta_k) \| \to 0,
%     \quad \text{for} \quad k\to\infty \quad \text{a.s.}
% \end{equation}
% \end{corollary}

% \begin{proof}
%     The stochastic full linearity follows from Theorem \ref{prop:stochastic_error} ( \eqref{eq:borel_cantelli}) and the almost sure convergence follows from Theorem 4.5 in \citealp{ha2025iteration}. 
%     \magenta{[check]}
% \hfill\halmos
% \end{proof}

%%%%
\begin{theorem}\label{cor:convergence_of_the_algo}
Let $M_k,\ k\geq0,$ be the trust-region models as constructed in Definition \ref{def:polynomial_model_DQ}, and let their Hessians $\mathsf{B}_k,\ k\geq0,$ have bounded norm $\| \mathsf{B}_k \| \leq \kappa_H < \infty, \ \forall k\geq0$.
Then, the SASTRO-DF algorithm converges almost surely to a first order stationary point, i.e.
\begin{align}
    \|\nabla f (\bftheta_k) \| \to 0,
    \quad \text{for} \quad k\to\infty \quad \text{a.s.}
    \notag
\end{align}
\end{theorem}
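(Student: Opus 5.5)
The plan is to reduce the statement to the standard almost-sure convergence argument for ASTRO-DF-type methods (Theorem 4.5 in \citealp{ha2025iteration}, or the analogous argument in \citealp{ha2025complexity}). That argument needs two ingredients: the function-estimate errors at all design and candidate points are eventually $\mcO(\Delta_k^2)$ almost surely, and the model is stochastically fully linear. Both are supplied by earlier results, so the work is mainly in checking that they fit together.

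\textbf{Step 1 (accurate estimates).} By Theorem~\ref{prop:stochastic_error}, applied to every point $i=0,\ldots,p+1$ (the paper notes that the statement holds for all interpolation and candidate points), and by a union bound over the finitely many points, there is an a.s.\ finite iteration $K_E$ such that $|E(\bftheta_k^{(i)},N_k^{(i)})|\le\kappa_{fde}\Delta_k^2$ for all $k\ge K_E$ and all $i$.

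\textbf{Step 2 (full linearity).} On the event of Step 1, the DQ-SPIM of Definition~\ref{def:polynomial_model_DQ} is fully linear. The matrix $\mathcal{M}(\Phi^{\text{DQ}},\Theta_k^{\text{DQ}})$ has an explicit inverse, and its central-difference structure gives well-poisedness with a constant independent of $k$. By Assumption~\ref{ass:lipschitz}, the deterministic interpolation error is $\mcO(\Delta_k)$ in the gradient and $\mcO(\Delta_k^2)$ in value. The estimation error contributes at most $\kappa_{fde}\Delta_k^2/\Delta_k=\mcO(\Delta_k)$ to the gradient through the $1/\Delta_k$ factors of the inverse. Combined with $\|\mathsf{B}_k\|\le\kappa_H$, this yields, for $k\ge K_E$, the bounds $\|\nabla f(\bftheta_k)-\nabla M_k(\bftheta_k)\|\le\kappa_g\Delta_k$ and $|f-M_k|\le\kappa_f\Delta_k^2$ on $\mcB_k$.

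\textbf{Step 3 ($\Delta_k\to0$).} Assume the step satisfies a Cauchy decrease $M_k(\bftheta_k)-M_k(\bftheta_k^s)\ge \tfrac12\|\nabla M_k\|\min(\|\nabla M_k\|/\kappa_H,\Delta_k)$. On successful iterations we have $\Delta_k\le\tilde\eta\|\nabla M_k\|$. Together with the $\Delta_k^2$ error bounds, each successful iteration past $K_E$ then reduces the true $f$ by at least $c\Delta_k^2$. Since $f$ is bounded below, $\sum_{k\in\mathcal S}\Delta_k^2<\infty$, where $\mathcal S$ is the set of successful iterations. Unsuccessful iterations shrink $\Delta_k$ geometrically, so $\sum_k\Delta_k^2<\infty$ and hence $\Delta_k\to0$ a.s.

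\textbf{Step 4 (gradient converges).} Lemma~\ref{lem:successful_iteration} gives $\|\nabla M_k\|=\mcO(\Delta_k)$ for $k\ge K_\rho$. By Step 2, $\|\nabla f(\bftheta_k)\|\le\|\nabla M_k\|+\kappa_g\Delta_k=\mcO(\Delta_k)\to0$ almost surely, for $k\ge\max(K_E,K_\rho)$.

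The main obstacle is Step 3, which also enters Step 1. Theorem~\ref{prop:stochastic_error} controls the estimation error at $\bftheta_k$ relative to $\Delta_k^2$, but the sufficient-decrease argument must also handle the candidate point $\bftheta_k^s$ and the ratio $\rho_k$. I would first handle this by bounding $|\rho_k-\rho_k^{true}|\le 4\kappa_{fde}\Delta_k^2/(c\Delta_k^2)$, choosing $\kappa_{fde}$ small relative to $\eta$, and then citing the deterministic-style argument of \citet{ha2025iteration} for the remaining details.
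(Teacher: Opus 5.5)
Your proposal is correct and follows essentially the same route as the paper. Both use the triangle inequality with an $\mcO(\Delta_k)$ model-gradient error, take $\|\nabla M_k\|=\mcO(\Delta_k)$ from Lemma~\ref{lem:successful_iteration}, and get $\Delta_k\to0$ a.s.\ from the Cauchy decrease $\kappa_c\Delta_k^2$ on successful iterations (via $\Delta_k\le\tilde\eta\|\nabla M_k\|$ and $\|\mathsf{B}_k\|\le\kappa_H$), combined with the eventual bound $|E|\le\kappa_{fde}\Delta_k^2$ from Theorem~\ref{prop:stochastic_error} for small $\kappa_{fde}$.

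The differences are minor. You derive the gradient error directly from the central-difference structure of the DQ-SPIM, whereas the paper cites Lemma 2.8(ii) of \citet{shashaani2018astro}. You also track the decrease of the true $f$ with the candidate-point error made explicit, which is cleaner than the paper's telescoping of $\tilde f$ and $E$. Your closing concern about $\rho_k$ is unnecessary, because Lemma~\ref{lem:successful_iteration} already covers it.
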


\begin{proof}
By triangle inequality, we have
\begin{align}\label{eq:convergence_triang}
    \| \nabla f(\bftheta_k)\| \leq \| \nabla f(\bftheta_k) - \nabla M_k(\bftheta_k)\| + \| \nabla M_k(\bftheta_k)\|,
    \quad k\geq 0,
\end{align}
so we can prove the statement by showing that the right-hand side vanishes for $k\to\infty$ almost surely.
Following Lemma 2.8(ii) in \citep{shashaani2018astro}, it holds that $\| \nabla f(\bftheta_k) - \nabla M_k(\bftheta_k)\| \leq \mcO(\Delta_k)$ almost surely for sufficiently large $k$, for any linear trust-region model; similarly, this holds for the DQ-SPIM model $M_k$ of Definition \ref{def:polynomial_model_DQ} (see also Chapter 2-3 in \citep{conn2009introduction} for details on the procedure).
Then, it is sufficient to show that $\Delta_k \overset{\text{a.s.}}{\underset{k\to\infty}{\longrightarrow}} 0$ to prove that the right-hand side in  \eqref{eq:convergence_triang} goes to zero, where $\|\nabla M_k\|$ is also upper bounded by $\mcO(\Delta_k)$ by Lemma \ref{lem:successful_iteration}.
%where $\|\nabla M_k\|$ is also upper bounded by $\mcO(\Delta_k)$ as a consequence of the acceptance rule \eqref{eq:trust_region_update} \magenta{(see also \citealp{ha2025complexity})}.

To prove that $\Delta_k\overset{\text{a.s.}}{\underset{k\to\infty}{\longrightarrow}}0$, following Lemma 4.3 in \citep{ha2025complexity} we show that $\Delta_{k_i}^2$ is summable in $k_i$, where $\mcA:=\{k_1,k_2,\ldots\}$ is the set of successful iterations (for unsuccessful iterations, the trust-region radius vanishes by means of rule \eqref{eq:trust_region_update}).
We first note that
\begin{align}
    \infty 
    & > f(\bftheta_{k_1})-f(\bftheta^*)
    \label{eq:term_first}
    \\
    & \geq 
    \sum_{i=1}^{\infty} [\tilde{f}(\bftheta_{k_i}, N_{k_i}) - \tilde{f}(\bftheta_{k_{i+1}},N_{k_{i+1}})] 
    \label{eq:term_estimate}
    \\
    & \quad + 
    \sum_{i=1}^{\infty} [E(\bftheta_{k_i}, N_{k_i}) - E(\bftheta_{k_{i+1}},N_{k_{i+1}})],
    \label{eq:term_error}
\end{align}
where, due to $\nabla f$ being Lipschitz continuous (Assumption \ref{ass:lipschitz}), $f(\bftheta_{k_1})$ is finite whenever $\bftheta_{k_1}$ is finite, and $\bftheta_{k_1}$ is finite in any trust region.

Now, consider the first sum in  \eqref{eq:term_estimate}.
%Let $\mathsf{B}_k$ be the Hessian of $M_k$ constructed as in Definition \ref{def:polynomial_model_DQ}, and let its norm be bounded as $\| \mathsf{B}_k \| \leq \kappa_H < \infty, \ \forall k\geq0$, as postulated in the theorem.
Standard computations involving Cauchy steps and the trust region update rule \eqref{eq:trust_region_update} yield
\begin{align*}
    \tilde{f}(\bftheta_{k_i}, N_{k_i}) - \tilde{f}(\bftheta_{k_{i+1}},N_{k_{i+1}})
    & \geq \eta \ [M_{k_i}(\bftheta_{k_i}) - M_{k_{i+1}}(\bftheta_{k_{i+1}})]
     \geq \kappa_c \Delta_{k_i}^2, 
    \qquad \kappa_c:= \frac{\eta}{2\tilde{\eta}} \left( \frac{1}{\tilde{\eta}\kappa_H} \wedge 1 \right),
\end{align*}
for $\eta,\tilde{\eta}>0$. 
Using this result, we can rearrange \eqref{eq:term_first}-\eqref{eq:term_error} as
\begin{align}
    f(\bftheta^*)-f(\bftheta_{k_1}) + \kappa_c \sum_{i=1}^{\infty} \Delta_{k_i}^2 
    & \leq \sum_{i=1}^{\infty} [E(\bftheta_{k_i}, N_{k_i}) - E(\bftheta_{k_{i+1}},N_{k_{i+1}})]
    \leq 2 \sum_{i=1}^{\infty} | E(\bftheta_{k_i}, N_{k_i}) |
    \label{eq:2_times_error}
\end{align}

Following Theorem \ref{prop:stochastic_error}, let now $K_{fde}$ be the iteration such that $| E(\bftheta_{k}, N_{k}) | < \kappa_{fde}\Delta_{k}^2, \ \forall k\geq K_{fde}$, for an arbitrary $\kappa_{fde}>0$, and let $I_{fde}$ be the first successful iteration starting from $K_{fde}$.
Then, it holds that
\begin{align*}
    \sum_{i=1}^{\infty} | E(\bftheta_{k_i}, N_{k_i}) | \leq \sum_{i=1}^{I_{fde}-1} | E(\bftheta_{k_i}, N_{k_i}) |
    + \kappa_{fde} \sum_{i=I_{fde}}^{\infty} \Delta_{k_i}^2,
    \qquad \text{a.s.}
\end{align*}
Using this result and  \eqref{eq:2_times_error} in \eqref{eq:term_first}-\eqref{eq:term_error}, we get
\begin{align*}
    \sum_{i=I_{fde}}^{\infty} \Delta_{k_i}^2
    \leq \frac{1}{\kappa_c - 2\kappa_{fde}} \left(
    f(\bftheta_{k_1} - f(\bftheta^*) + 2\sum_{i=1}^{I_{fde}-1} | E(\bftheta_{k_i}, N_{k_i}) |
    \right),
    \qquad \text{a.s.},
\end{align*}
where the right-hand side is positive and finite for any $0<\kappa_{fde}<\kappa_c/2$, ensuring that $\Delta_k\overset{\text{a.s.}}{\underset{k\to\infty}{\longrightarrow}}0$.
%\hfill\halmos
\end{proof}

%%% KEEP
% \magenta{
% \begin{lemma}\label{lemma:Delta}
% \giovanni{NEEDED?}
%     The trust-region radius $\Delta_k$ is such that
%     \begin{equation}\label{eq:Delta2}
%         \sum_{k=0}^{\infty} \Delta^2_k<\infty,
%     \end{equation}    \begin{equation}\label{eq:Delta}
%         \lim_{k\to\infty}\Delta_k=0.
%     \end{equation}
% \end{lemma}
% \begin{proof}
%     The result follows from Lemma 4.3 in \citep{ha2025complexity}.
%     \magenta{[need a check]}
%     %KEEP
%     %also due to trust-region update rule of \eqref{property:model_agreement}, \eqref{eq:trust_region_update},
% \end{proof}
% }

\subsection{Complexity}\label{sec:complexity}
\noindent
In this section we establish the sample complexity of the SASTRO-DF algorithm. 
The sample complexity measures the number of oracle calls $F(\cdot)$ the algorithm needs across all iterations to achieve $\epsilon$-accuracy.
We denote the total oracle calls after $k$ iterations as $W_k$ (see \eqref{eq:total_samples_PRELIMINARIES}).
% \begin{equation}\label{eq:total_samples}
%     W_{k} = \sum_{l=0}^k \left(\sum_{i=0}^p N_l^{(i)}+N_l^{s} \right),
% \end{equation}
% where $p$ is the number of noncentral points used to construct the polynomial approximation model in $\mcB(\bftheta_k,\Delta_k), \forall k$, which we set as $p=2d$ following the DQ-SPIM construction of Definition \ref{def:polynomial_model_DQ}.
Then, we say that the algorithm exhibits $\mcO(\epsilon^{-m})$ sample complexity if 
\begin{align}
    & W_{\epsilon} 
    = \sum_{k=0}^{T_\epsilon} \left( \sum_{i=0}^p N_k^{(i)} + N_k^s \right) = \mcO(\epsilon^{-m}),
    \qquad \text{a.s.}
    \label{eq:sample_complexity_def} \\
    & \text{for\quad} T_\epsilon := \inf\{k: \| \nabla f (\bftheta_k)\|_2 \leq \epsilon\},
    \label{eq:T_epsilon}
\end{align}
and for a \emph{sufficiently small} $\epsilon$, to be specified later in the section.

In order to establish the sample complexity it is first necessary to identify the iteration complexity, which is addressed by the following corollary.

\begin{lemma}\label{lemma:iteration_complexity}
% KEEP
% If Assumptions \ref{ass:lipschitz}, \ref{ass:smooth}, \ref{ass:strata} hold, the trust region models are stochastically fully linear, and the trust region is updated according to \eqref{property:model_agreement} and \eqref{eq:trust_region_update} \giovanni{are all these necessary and sufficient conditions?}, then 
The number of iterations required to reach an $\epsilon$-first-order stationary point, i.e., $\|\nabla f(\bftheta_k)\|\leq\epsilon$, satisfies 
%\sara{state probabilistically}
\begin{align}\label{eq:iter_complexity}
T_\epsilon = \mathcal{O} \left( \epsilon^{-2} \right),
\qquad \text{a.s.},
\end{align}
for a sufficiently small $\epsilon>0$.
%
%That is, the SASTRO-DF algorithm yields $\mcO(\epsilon^{-2})$ iteration complexity. 
\end{lemma}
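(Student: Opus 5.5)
The idea is to convert the a.s.\ summability of $\Delta_{k_i}^2$ over successful iterations, obtained in the proof of Theorem~\ref{cor:convergence_of_the_algo}, into a count of iterations before $T_\epsilon$. This works provided that $\Delta_k$ stays bounded below by a multiple of $\epsilon$ for all $k<T_\epsilon$.

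\textbf{Step 1: a lower bound on the radius before $T_\epsilon$.} For $k<T_\epsilon$ we have $\|\nabla f(\bftheta_k)\|>\epsilon$. By Lemma 2.8(ii) of \citep{shashaani2018astro}, used as in the proof of Theorem~\ref{cor:convergence_of_the_algo}, there is an a.s.\ finite $K_g$ such that $\|\nabla f(\bftheta_k)-\nabla M_k(\bftheta_k)\|\le\kappa_g\Delta_k$ for $k\ge K_g$. Hence $\|\nabla M_k(\bftheta_k)\|\ge\epsilon-\kappa_g\Delta_k$.

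Take $k\ge K_0:=\max(K_g,K_\rho,K_{fde})$. If $\Delta_k\le c\,\epsilon$ with $c$ small enough, for instance $c\le \kappa_{dum}/(1+\kappa_g\kappa_{dum})$, then $\Delta_k\le\kappa_{dum}\|\nabla M_k\|$. Lemma~\ref{lem:successful_iteration} then makes iteration $k$ successful, so the radius expands, by at least $\ovgamma$ until it reaches $\Deltamax$. A standard induction then gives, for all $K_0\le k<T_\epsilon$,
\[
\Delta_k\ \ge\ \min\{\ungamma\, c\,\epsilon,\ \Delta_{K_0}\}.
\]
Requiring $\epsilon$ to be \emph{sufficiently small}, namely $\ungamma c\epsilon\le\Delta_{K_0}$, leaves $\Delta_k\ge \ungamma c\,\epsilon$. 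This is exactly where the qualifier in the statement is used.

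\textbf{Step 2: counting successful iterations.} From the proof of Theorem~\ref{cor:convergence_of_the_algo}, after $I_{fde}$ we have a.s.
\[
\sum_i\Delta_{k_i}^2\le C_\omega<\infty,
\]
where $C_\omega$ is random and a.s.\ finite. Therefore the number of successful iterations in $[K_0,T_\epsilon)$ is at most $C_\omega/(\ungamma c\epsilon)^2=\mcO(\epsilon^{-2})$.

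\textbf{Step 3: counting unsuccessful iterations.} Each unsuccessful iteration shrinks $\Delta$ by $\ungamma$, and each successful one grows it by at most $\ovgamma$. Since $\Delta$ stays between $\ungamma c\epsilon$ and $\Deltamax$, the number of unsuccessful iterations $U$ and successful iterations $S$ in the window satisfy
\[
U\log(1/\ungamma)\le S\log\ovgamma+\log(\Deltamax/(\ungamma c\epsilon)).
\]
Thus $U=\mcO(\epsilon^{-2})+\mcO(\log\epsilon^{-1})$. Adding the a.s.\ finite $K_0$, which does not depend on $\epsilon$, gives $T_\epsilon=\mcO(\epsilon^{-2})$ a.s.

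\textbf{Main obstacle.} The delicate step is Step 1. It requires that the random indices $K_g,K_\rho,K_{fde}$ are a.s.\ finite and independent of $\epsilon$, so that the constants in $\mcO(\epsilon^{-2})$ are random but $\epsilon$-free. It also requires that the "sufficiently small $\epsilon$" threshold is itself a random (pathwise) quantity. I would handle this by arguing pathwise on the probability-one event where all three indices are finite. A secondary concern is that $T_\epsilon<\infty$ a.s.; this follows from Theorem~\ref{cor:convergence_of_the_algo}.
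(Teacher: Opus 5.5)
Your proof is correct and has the same skeleton as the paper's: a floor of order $\epsilon$ on $\Delta_k$ over a window $[K,T_\epsilon)$, combined with a.s.\ summability of squared radii, so that $T_\epsilon\epsilon^2$ is a.s.\ bounded. The bookkeeping differs in two ways.

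\textbf{Summation.} The paper sums over \emph{all} iterations, $\infty>\sum_{k=K_{\rho,mge}}^{T_\epsilon}\Delta_k^2\ge(T_\epsilon-K_{\rho,mge})\kappa_{l\Delta}^2\epsilon^2$. This finishes in one line but needs $\sum_k\Delta_k^2<\infty$. The proof of Theorem~\ref{cor:convergence_of_the_algo} only establishes $\sum_i\Delta_{k_i}^2<\infty$ over successful iterations; the extension to all iterations is the geometric-decay argument behind Lemma 4.3 of \citep{ha2025complexity}. You use exactly the successful-iteration sum and then handle unsuccessful iterations with the multiplicative radius balance of your Step 3. That is essentially the same geometric argument, carried out inside the window, and it costs only an absorbed $\mcO(\log\epsilon^{-1})$ term.

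\textbf{Radius floor.} The paper obtains $\Delta_k\ge\kappa_{l\Delta}\epsilon$ by quoting \eqref{eq:DeltaGreaterThanG} and Lemma 4.5 of \citep{ha2025complexity}, with the smallness condition $K_{\rho,mge}<T_{\epsilon_0}$. Your induction proves the floor $\min\{\ungamma c\,\epsilon,\Delta_{K_0}\}$ directly. It also makes explicit that ``sufficiently small $\epsilon$'' must include $\ungamma c\,\epsilon\le\Delta_{K_0}$, which is a pathwise random threshold. This is the more careful reading: Lemma~\ref{lem:successful_iteration} does not yield \eqref{eq:DeltaGreaterThanG} pointwise at every $k\ge K_\rho$, only a floor of exactly the kind you derive.

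Overall, the paper's route is shorter once all-iteration summability is granted, while yours is self-contained relative to what was actually proved earlier in the paper.
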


\begin{proof}
The proof follows analogous steps of Theorem 5.1 in \citep{ha2025complexity} used for the ASTRO-DF algorithm.
Under the stratification framework, the results of Theorem \ref{prop:stochastic_error} and Corollary \ref{cor:convergence_of_the_algo} ensure we obtain the same iteration complexity, as summarized in the following.

First, note that by Lemma 2.8(ii) in \citep{shashaani2018astro}, we have that for $\kappa_{mge}>0$, there exists a finite iteration $K_{mge}$ such that $\| \nabla f(\bftheta_k) - \nabla M_k(\bftheta_k)\| \leq \kappa_{mge}\Delta_k$ almost surely, for all $k\geq K_{mge}$.
In addition, by Lemma \ref{lem:successful_iteration}, we know that $\Delta_k \geq \mcO(\|\nabla M_k\|), \ k\geq K_{\rho}$, for a finite iteration $K_{\rho}$.
% In addition, by Lemma 4.4 in \citep{ha2025complexity} we know that for any $\kappa_{dum}>0$, there exists a finite iteration $K_{\rho}$ such that, for all $k\geq K_{\rho}$, the inequality $\Delta_k\leq \kappa_{dum} \| \nabla M_k \|$ implies a successful iteration almost surely, so that
% \begin{align}\label{eq:DeltaGreaterThanG}
%     \Delta_k \geq \mcO(\|\nabla M_k\|).
% \end{align}

Define now the iteration 
\begin{align}\label{eq:Krhomge}
    K_{\rho,mge}=\max\{K_{\rho},K_{mge}\},
\end{align}
and set an $\epsilon_0>0$ sufficiently small so that $K_{\rho,mge}<T_{\epsilon_0}$, where $T_{\epsilon_0}$ is defined in \eqref{eq:T_epsilon}. 
For all $\epsilon\leq\epsilon_0$,
By using the relation \eqref{eq:DeltaGreaterThanG} and the definition of $T_\epsilon$ in \eqref{eq:T_epsilon},
we have that $\Delta_k\geq\kappa_{l\Delta} \epsilon$ for some $\kappa_{l\Delta}>0$ and for $K_{\rho,mge}\leq k < T_{\epsilon}$ (see also Lemma 4.5 in \citealp{ha2025complexity})
Then, by using the result that $\sum_{k=0}^{\infty} \Delta_k^2$ is finite almost surely (Corollary \ref{cor:convergence_of_the_algo}), we have
\begin{align}
    \infty 
    & > \sum_{k=K_{\rho,mge}}^{T_{\epsilon}} \hspace{-10pt} \Delta_k^2
    \notag
    \\
    & \geq (T_\epsilon-K_{\rho,mge}) \kappa_{l\Delta}^2 \epsilon^2.
    \notag
\end{align}
Thus, $T_{\epsilon}\epsilon^2$ is bounded almost surely, implying \eqref{eq:iter_complexity}.
%\hfill\halmos
\end{proof}

The next theorem establishes the sample complexity of the SASTRO-DF algorithm.

\begin{theorem}\label{prop:sample_complexity}
The number of oracle calls required to reach an $\epsilon$-first-order stationary point, i.e., $\nabla f(\bftheta_k)\leq\epsilon$, satisfies, for sufficiently small $\epsilon>0$,
\begin{subnumcases}{W_{\epsilon}=}
    \mcO \left( \epsilon^{-\frac{8q+4+2q\delta}{q+2}} \right), 
    & \text{if \ $\|\nabla\ \mu^{-1}\|<\infty, \ \forall \uu\in[0,1]^q$}
    \label{eq:sample_complexity}
    \\
    \mcO \left( \epsilon^{-\frac{8q+2+(2\delta+\varrho)q}{q+1}} \right), 
    & \text{if \ $\mu_i$ \ is \ref{eq:SE} $\forall i=1,\ldots,q$, and $F$ is \ref{eq:PO}}
    \label{eq:sample_complexity_mu_subexp_h_power}
    \\
    \mcO \left( \epsilon^{-\frac{
    (8q+2\delta q +2)\kappa_{\min}-4b_{\max}
    }{
    (q+1)\kappa_{\min}-2b_{\max}}
    } \right), 
    & \text{if \ $\mu_i$ \ is \ref{eq:SE} $\forall i=1,\ldots,q$, and $F$ is \ref{eq:EX}}
    \label{eq:sample_complexity_mu_subexp_h_exp}
    \\
    \mcO \left( \epsilon^{-\frac{8q+2+(2\delta+\varrho)q}{q+1}} \right), 
    & \text{if \ $\mu_i$ \ is \ref{eq:SG} $\forall i=1,\ldots,q$, and $F$ is \ref{eq:PO}}
    \label{eq:sample_complexity_mu_subgaussian_h_power}
    \\
    \mcO \left( \epsilon^{-\frac{8q+2+(2\delta+\varrho)q}{q+1}} \right),
    & \text{if \ $\mu_i$ \ is \ref{eq:SG} $\forall i=1,\ldots,q$, and $F$ is \ref{eq:EX}}
    \label{eq:sample_complexity_mu_subgaussian_h_exp}
\end{subnumcases}
almost surely, for arbitrarily small for all $\delta, \varrho > 0$, where $\kappa_{\min}=\min_{i}\{\kappa_i\}$,
$b_{\max}=\max_{i}\{b_i\}$, $\kappa_{\min}>2b_{\max}$,
with $\kappa_i, b_i, i=1,\ldots,q,$ being described in Definitions \ref{def:mu_subexp_subgaussian}-\ref{def:F_poly_exp},
and in cases \eqref{eq:sample_complexity_mu_subexp_h_power}-\eqref{eq:sample_complexity_mu_subgaussian_h_exp} we have assumed that $X_i\perp X_j, \forall i\neq j$.
\end{theorem}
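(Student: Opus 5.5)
The plan is to bound each per-point sample size $N_k^{(i)}$ through the stopping rule \eqref{eq:adaptive_sampling_rule}, and then sum over $k\le T_\epsilon$ using Lemma \ref{lemma:iteration_complexity}. As in the previous section, I work with $N_k=N_k^{(0)}$. The other $p+1=2d+1$ points (DQ-SPIM) obey the same rule with the same $\Delta_k$ and $\lambda_k$, so they only add a factor $(p+2)$, independent of $\epsilon$.

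\textbf{Step 1 (per-iteration sample size).} Since $N_k$ is the smallest admissible $n$ in \eqref{eq:adaptive_sampling_rule}, it is at most the first $n\in\mcN_q$ with $n\ge\lambda_k$ and $\varHat_0\le \kappa_{as}^2\Delta_k^{2\gamma}/\lambda_k$. By \eqref{eq:varHat_fTilde}, $\varHat_0=\frac1n\max(\sigmamin^2,\bar\sigma_k^2)$, where $\bar\sigma_k^2$ is the averaged within-strata sample variance. Suppose $\bar\sigma_k^2\le \sigmamax^2$ eventually a.s. Then the rule is met as soon as $n\ge \sigmamax^2\lambda_k\Delta_k^{-2\gamma}/\kappa_{as}^2$. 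Rounding up to $\mcN_q$ costs at most a multiplicative constant depending on $\nbar$ and $q$. Hence
\begin{align*}
N_k\le c\,\max\bigl(\lambda_k,\ \lambda_k\Delta_k^{-2\gamma}\bigr)=\mcO\bigl(\lambda_k\Delta_k^{-2\gamma}\bigr),
\end{align*}
where the last equality uses $\Delta_k\le\Deltamax$.

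\textbf{Step 2 (lower bound on the radius).} In the proof of Lemma \ref{lemma:iteration_complexity} it was shown that $\Delta_k\ge\kappa_{l\Delta}\epsilon$ for $K_{\rho,mge}\le k<T_\epsilon$, where $K_{\rho,mge}$ is defined in \eqref{eq:Krhomge}. It was also shown that $T_\epsilon=\mcO(\epsilon^{-2})$ a.s. for $\epsilon\le\epsilon_0$. The iterations $k<K_{\rho,mge}$ are finitely many a.s., and each has finite $N_k$, so they contribute an $\epsilon$-independent a.s. finite amount.

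\textbf{Step 3 (summation).} Write $\lambda_k=k^{s}$, so that $N_k=\mcO(k^{s}\epsilon^{-2\gamma})$ for $k<T_\epsilon$. Then
\begin{align*}
W_\epsilon=\mcO(1)+\mcO\Bigl(\epsilon^{-2\gamma}\sum_{k\le T_\epsilon}k^{s}\Bigr)=\mcO\bigl(\epsilon^{-2\gamma}T_\epsilon^{1+s}\bigr)=\mcO\bigl(\epsilon^{-2\gamma-2(1+s)}\bigr).
\end{align*}
Plugging in each pair $(s,\gamma)$ from Theorem \ref{prop:stochastic_error} gives the stated exponents:
\begin{itemize}
\item Case \eqref{eq:lambda_gamma}: $s=(1+\delta)q/(q+2)$ and $2\gamma=4q/(q+2)$. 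The exponent is $\frac{4q+2(2q+2+\delta q)}{q+2}=\frac{8q+4+2q\delta}{q+2}$, which is \eqref{eq:sample_complexity}.
\item Cases \eqref{eq:lambda_gamma_mu_subexp_h_power}, \eqref{eq:lambda_gamma_mu_subgaussian_h_power}, \eqref{eq:lambda_gamma_mu_subgaussian_h_exp}: $s=(1+\delta)q/(q+1)$ and $2\gamma=(4+\varrho)q/(q+1)$. The exponent is $\frac{8q+2+(2\delta+\varrho)q}{q+1}$, as claimed.
\item Case \eqref{eq:lambda_gamma_mu_subexp_h_exp}: $s=(1+\delta)q\kappa_{\min}/D$ and $2\gamma=4q\kappa_{\min}/D$, with $D=(q+1)\kappa_{\min}-2b_{\max}$. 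Since $2(D+(1+\delta)q\kappa_{\min})+4q\kappa_{\min}=(8q+2\delta q+2)\kappa_{\min}-4b_{\max}$, this gives \eqref{eq:sample_complexity_mu_subexp_h_exp}.
\end{itemize}

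\textbf{Main obstacle.} The delicate point is the upper bound in Step 1: one needs $\bar\sigma_k^2$ bounded above eventually along the random, adapted sequence of iterates. This is needed because $N_k$ is a stopping time and the sample variance at the stopping time could overshoot. I would handle it as in Theorems 2.6--2.7 of \citep{shashaani2018astro} together with Theorem 1 of \citep{ghosh1979sequential}. The per-stratum variances satisfy $\sigma^2_{\mfu}\le$ a uniform bound, by Assumption \ref{ass:smooth} and the within-stratum variance estimates of Theorem \ref{prop:var_strat}. The sample sizes grow like $\lambda_k\to\infty$, and a Chebyshev plus Borel--Cantelli argument, the same summability in $k$ used in Theorem \ref{prop:stochastic_error}, then gives $\bar\sigma_k^2\le 2\sup\sigma^2$ for all large $k$ a.s. The resulting random but finite threshold iteration is absorbed into the $\mcO(1)$ term of Step 2, by enlarging $K_{\rho,mge}$ and shrinking $\epsilon_0$.
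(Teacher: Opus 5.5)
Your argument is essentially the paper's. There is an almost surely finite prefix of iterations (the paper's $Q_w$, up to $K_\sigma=\max(K_{\rho,mge},K_{fg})$). After it, $N_k\lesssim\lambda_k\Delta_k^{-2\gamma}$ with $\Delta_k\ge\kappa_{l\Delta}\epsilon$ over $T_\epsilon=\mcO(\epsilon^{-2})$ iterations, which gives $\epsilon^{-2\gamma}T_\epsilon^{1+s}$. Your exponent algebra checks out in all five cases, and your explicit rounding constant for $\mcN_q$ is a detail the paper hides inside $\kappa_N$.

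The one place you depart from the paper is the justification of the eventual upper bound on the averaged within-stratum sample variance, and there the sketch needs repair.

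First, the per-stratum variances $\sigma^2_{\mfu}$ are not uniformly bounded in the unbounded-support cases. For example, the top stratum in the SE--PO case has variance of order $[\log \ell]^{2(a_{\max}-1)}$. What is bounded is the average, $\frac{1}{\ell}\sum_{\mfu}\sigma^2_{\mfu}\le\var(F(\bftheta,\XX))$, by the law of total variance with equiprobable strata.

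Second, Chebyshev applied to $\bar\sigma_k^2$ requires $\mbE[F^4]<\infty$. Assumption~\ref{ass:smooth} does not provide this, and it fails in the SE--EX case whenever $2b_{\max}<\kappa_{\min}\le 4b_{\max}$ (take $F=\mre^{bX}$, $X\sim\mathrm{Exp}(\kappa)$). Even where fourth moments exist, the summability is not ``the same'' as in Theorem~\ref{prop:stochastic_error}. There, summability came from the variance order of $\tilde f$, whereas $\var(\bar\sigma_k^2)$ is governed by within-stratum fourth moments and must be checked separately.

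The paper instead appeals to almost-sure convergence of the sample variance, which is a second-moment statement. For bounded support it uses the deterministic Popoviciu bound.

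Note also that this direction needs no stopping-time or overshoot machinery. Since $N_k$ is a minimum, it suffices that the rule holds at the single $\mcF_{k-1}$-measurable size $n_k^\star\asymp\lambda_k\Delta_k^{-2\gamma}$, rounded up to $\mcN_q$.
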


% \begin{align}\label{eq:sample_complexity_SASTRO}
%     W_{\epsilon} = \mcO \left( \epsilon^{-\frac{8q+4+2q\delta}{q+2}} \right),
%     \qquad \text{a.s.},
%     \qquad \text{if \ $\|\nabla\ \mu^{-1}\|<\infty, \ \forall \uu\in[0,1]^q$}
% \end{align}
%That is, The SASTRO-DF algorithm has $\mcO \left( \epsilon^{-\frac{8q+4+2q\delta}{q+2}} \right)$ sample complexity.

\begin{proof}
When $\XX$ has unbounded support, the sample variance converges almost surely to the true variance as the sample size grows (see, e.g., Theorem 2.6 in \citealp{shashaani2018astro}).
Thus, by using Borel-Cantelli lemma it can be observed that for any $\kappa_{\sigma}>1$,
\begin{align}
    \mbP\left\{
    \limsup_{k\to\infty}
    \frac{1}{L_k}\sum_{\mfu\in\mfU_k}\hat{\sigma}_{\mfu}^2(\bftheta_k,\nbar) \geq \frac{\kappa_{\sigma}}{L_k}\sum_{\mfu\in\mfU_k} \sigma_{\mfu}^2(\bftheta_k)
    %\quad \text{i.o.}
    \right\}
    = 0,
    \notag
\end{align}
where $\sigma_{\mfu}^2(\bftheta_k)=\var(F(\bftheta_k,\XX) | \XX\in\mcX_{\mfu})$, and $\sigma_{\mfu}^2(\bftheta_k,\nbar)$ is the corresponding sample variance obtained with $\nbar$ MC draws.
We denote $\sigmamax^2 := \frac{\kappa_{\sigma}}{L_k}\sum_{\mfu\in\mfU_k} \sigma_{\mfu}^2(\bftheta_k)$ and $K_{fg}$ the iteration such that $\frac{1}{L_k}\sum_{\mfu\in\mfU_k}\hat{\sigma}_{\mfu}^2(\bftheta_k,\nbar) < \frac{\kappa_{\sigma}}{L_k}\sum_{\mfu\in\mfU_k} \sigma_{\mfu}^2(\bftheta_k)$ happens for all $k\geq K_{fg}$.
When the support of $\XX$ is bounded, the sample variance is upper bounded by a constant in a deterministic way \citep{popoviciu1935equations}.

Now, set $K_{\sigma}=\max(K_{\rho,mge}, K_{fg})$, where $K_{\rho,mge}$ is defined in \eqref{eq:Krhomge}, and set a small enough $\epsilon>0$ such that $K_{\sigma}<T_{\epsilon}$, where $T_{\epsilon}$ is introduced in \eqref{eq:T_epsilon}.
Note further that for all $k$ such that $K_{\sigma}\leq k<T_{\epsilon}$, we have that $\Delta_k\geq\kappa_{l\Delta} \epsilon$, for some constant $\kappa_{l\Delta}>0$ similarly to the proof of Lemma \ref{lemma:iteration_complexity} (see also Lemma 4.5 in \citealp{ha2025complexity}).

% Furthermore, by effect of the almost sure convergence (Corollary \ref{cor:convergence_of_the_algo}) and the trust-region update rule ( \eqref{eq:trust_region_update}), we can lower-bound the trust-region radius as
% \giovanni{instead of $\Deltamin$ write something depending on $\epsilon$, e.g., $\Delta_{T_{\epsilon}}$}
% \begin{align}
%     0 < \Deltamin \leq \Delta_k,
%     \qquad \forall k=0,1,\ldots,T_\epsilon,
% \end{align}
% where $T_\epsilon$ is defined in  \eqref{eq:T_epsilon} (see also Theorem 4.5, Chapter 4, in
% \citealp{wright1999numerical} and Lemma 3.12 in \citealp{fletcher2002global}), where $\Deltamin$ has order of magnitude of at least $\mcO(\epsilon)$.

Then, using the adaptive sample size in   \eqref{eq:adaptive_sampling_rule}, it holds that
\begin{align}
    W_{\epsilon} 
    & = \underbrace{\sum_{k=0}^{K_{\sigma}-1} \left( \sum_{i=0}^p N_k^{(i)} + N_k^s \right)}_{:=Q_w} + \sum_{k=K_{\sigma}}^{T_\epsilon} \left( \sum_{i=0}^p N_k^{(i)} + N_k^s \right)
    \notag
    \\
    & \leq Q_w + \kappa_N (T_{\epsilon}-K_{\sigma}+1) (p+2) \max\{\sigma_0, \sigma_{\text{MAX}}\} \lambda_{T_{\epsilon}} \Delta_{T_{\epsilon}}^{-2\gamma}
    \label{eq:sampling_rule_satisfaction}
    \\
    & \sim T_{\epsilon} \lambda_{T_{\epsilon}} \Delta_{T_{\epsilon}}^{-2\gamma},
    \label{eq:sample_complexity_lambda_delta}
\end{align}
almost surely, as $Q_w$ and $\sigmamax$ are finite almost surely, and where $\kappa_N>1$ is sufficiently large to ensure that \eqref{eq:sampling_rule_satisfaction} satisfies the sampling rule \eqref{eq:adaptive_sampling_rule}.
Then, using the values of $\lambda_k,\gamma$ in \eqref{eq:lambda_gamma}-\eqref{eq:lambda_gamma_mu_subgaussian_h_exp}, the iteration complexity result \eqref{eq:iter_complexity}, and relation $\Delta_k\geq \kappa_{l\Delta}\epsilon$ up to $T_{\epsilon}$, we verify the statement.
%\hfill\halmos
\end{proof}

% Then, using the adaptive sample size in   \eqref{eq:adaptive_sampling_rule} with $\lambda_k,\gamma$ chosen as in \eqref{eq:lambda_gamma}, it holds that
% \begin{align}\label{eq:Zk_bound}
%     W_k &= (p+1) \sum_{l=0}^{k} N_l \\
%     & \leq (2d+1) (k+1) N_{k} \\
%     & \leq (2d+1) (k+1) \max(\nbar, 2^{q}) \magenta{\hat{\sigma}_{\text{MAX}}} \sqrt{\nbar} \kappa_{as}^{-2} \Deltamin^{-2\gamma} \lambda_{k} \\
%     & = (2d+1) (k+1) \max(\nbar, 2^{q}) \magenta{\hat{\sigma}_{\text{MAX}}} \sqrt{\nbar} \kappa_{as}^{-2} \Deltamin^{-4q/(q+2)} k^{q(1+\delta)/(q+2)},
% \end{align}
% where the term $\max(\nbar, 2^{q})$ is due to the requirement that $N_k\in\mcN_q$ (see  \eqref{eq:set_N}), the choice of $\lambda_k$ and $\gamma$ follows from  \eqref{eq:lambda_gamma}, and $\delta>0$ can be chosen arbitrarily small.

% When $k=T_\epsilon$, we further have that $k=\mcO(\epsilon^{-2})$ by Lemma \ref{lemma:iteration_complexity}.
% Thus, the order of magnitude of the total number of MC samples needed to reach $\epsilon$-first-order-convergence is
% \begin{align}
%     W_{T_\epsilon}
%     & = \mcO\left( \Deltamin^{-4q/(q+2)} T_\epsilon^{(2q+2+q\delta)/(q+2)} \right) 
%     \label{eq:ZTeps_complexity}
%     \\
%     & = \mcO \left( \epsilon^{-\frac{8q+4+2q\delta}{q+2}} \right),
%     \label{eq:ZTeps_complexity_2}
% \end{align}
% which concludes the proof.

We note that, considering the five specifications \eqref{eq:sample_complexity}-\eqref{eq:sample_complexity_mu_subgaussian_h_exp} as a whole, the sample complexity of the SASTRO-DF ranges between approximately $\mcO\left(\epsilon^{-4}\right)$ (one dimensional case, bounded $\mcX$) to $\mcO(\epsilon^{-8})$. 
%(infinite dimensional case).
The following corollary shows that the sample complexity is superior to the no-stratification strategy for any fixed dimension $q<\infty$.

\begin{corollary}\label{corollary:sample_complexity_comparison}
The number of oracle calls required to reach an $\epsilon$-first-order stationary point, i.e., $\nabla f(\bftheta_k)\leq\epsilon$, under a no-stratification strategy satisfies, for sufficiently small $\epsilon>0$,
\begin{align}\label{eq:sample_complexity_NS}
    W_\epsilon^{\text{NS}}=\mcO\left( \epsilon^{-(8+2\delta)} \right),
\end{align}
almost surely, where $\delta>0$ is arbitrarily small.
\end{corollary}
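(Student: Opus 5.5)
The plan is to specialise the machinery of Theorems \ref{prop:stochastic_error} and \ref{prop:sample_complexity} to the degenerate stratification $\ell=L_k^{(i)}=1$ for all $k$ and $i$. With one stratum, the estimator \eqref{eq:stratified_estimator} is the plain sample mean. Its conditional variance is $\sigma^2(\bftheta_k)/n$, with $\sigma^2(\bftheta_k)=\var(F(\bftheta_k,\XX)\mid\mcF_{k-1})$, which is bounded by Assumption \ref{ass:smooth}. So $\term(n)=\mcO(1)$ in the notation of Theorem \ref{prop:var_strat}, and no regularity of $\mu^{-1}$ or independence of the margins is needed.

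First, I redo the Chebyshev step of Theorem \ref{prop:stochastic_error} with this variance. This gives
\begin{align*}
\mbP\{|E(\bftheta_k,N_k)|\geq\kappa_{fde}\Delta_k^2\mid\mcF_{k-1}\}\lesssim \frac{1}{N_k\Delta_k^4}\lesssim \frac{1}{\lambda_k\Delta_k^{4-2\gamma}},
\end{align*}
where the second inequality uses the sampling rule \eqref{eq:adaptive_sampling_rule} together with the floor $\sigmamin^2$, so that $N_k\gtrsim\lambda_k\Delta_k^{-2\gamma}$. For the right-hand side to be summable independently of $\Delta_k$, it suffices to take $\gamma^{\NS}=2$ and $\lambda_k^{\NS}=k^{1+\delta}$, as in \eqref{eq:lambda_gamma_NS}. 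Borel--Cantelli then gives \eqref{eq:borel_cantelli} for the unstratified estimator. Consequently, Theorem \ref{cor:convergence_of_the_algo}, Lemma \ref{lem:successful_iteration} and Lemma \ref{lemma:iteration_complexity} carry over verbatim, since they use the sampling scheme only through \eqref{eq:borel_cantelli}. In particular $T_\epsilon=\mcO(\epsilon^{-2})$ a.s.

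Next, I repeat the proof of Theorem \ref{prop:sample_complexity}. I split $W_\epsilon$ at $K_\sigma$. The head $Q_w$ is a.s. finite. For the tail, the sample-variance estimates eventually lie below $\kappa_\sigma\sigma^2(\bftheta_k)$, which is bounded, so the minimal $n$ in \eqref{eq:adaptive_sampling_rule} satisfies $N_k^{(i)}\leq\kappa_N\max\{\sigmamin^2,\sigmamax^2\}\lambda_k\Delta_k^{-4}$. Summing over $k\leq T_\epsilon$ and the $p+2$ points, and using $\Delta_k\geq\kappa_{l\Delta}\epsilon$ for $K_\sigma\leq k<T_\epsilon$, gives
\begin{align*}
W_\epsilon^{\NS}\lesssim T_\epsilon\,\lambda_{T_\epsilon}\,\epsilon^{-4}= T_\epsilon^{2+\delta}\epsilon^{-4}=\mcO\big(\epsilon^{-2(2+\delta)}\epsilon^{-4}\big)=\mcO\big(\epsilon^{-(8+2\delta)}\big),
\end{align*}
almost surely. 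This is the claim. As a sanity check, it agrees with \eqref{eq:sample_complexity} in the limit $q\to\infty$.

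The main obstacle is the argument that $\gamma=2$ and $\lambda_k=k^{1+\delta}$ are really needed. This matters for the intended comparison, though not for the $\mcO$ upper bound itself. For the bound as stated, it suffices to show the schedule \eqref{eq:lambda_gamma_NS} yields \eqref{eq:sample_complexity_NS}. The technical point I would check carefully is the a.s. control of the sample variance, so that $\sigmamax$ is finite uniformly over the tail iterations. For this I would invoke the same strong-law / Borel--Cantelli argument used in the proof of Theorem \ref{prop:sample_complexity} (Theorem 2.6 in \citealp{shashaani2018astro}), now with a single stratum.
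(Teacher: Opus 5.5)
Your proposal is correct and follows essentially the paper's route: the paper's proof plugs $\lambda_k^{\NS}=k^{1+\delta}$ and $\gamma^{\NS}=2$ from \eqref{eq:lambda_gamma_NS} into the bound \eqref{eq:sample_complexity_lambda_delta}, then uses $T_\epsilon=\mcO(\epsilon^{-2})$ and $\Delta_k\geq\kappa_{l\Delta}\epsilon$ to get $\epsilon^{-2}\cdot\epsilon^{-2(1+\delta)}\cdot\epsilon^{-4}$, which is exactly your computation. Your extra step, re-deriving the Borel--Cantelli property for the single-stratum estimator under these values, spells out what the paper only asserts informally after Theorem \ref{prop:stochastic_error}.
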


\begin{proof}
The proof follows the same procedure of Theorem \ref{prop:sample_complexity}, where the no-stratification values $\lambda_k^{\text{NS}},\gamma^{\text{NS}}$ in \eqref{eq:lambda_gamma_NS} are plugged into  \eqref{eq:sample_complexity_lambda_delta}, yielding the result.
%\hfill\halmos
\end{proof}

%%%%%
\subsection{Comparison with ASTRO-DF}\label{sec:comparison}
\noindent
Theorem \ref{prop:sample_complexity} proves a reduced sample complexity with respect to the ASTRO-DF in \citep{ha2025complexity} for small dimensions $q$ of the random vector $\XX$ and for several specifications of the map $\mu$ and the oracle $F(\cdot, \XX)$ as presented in Definitions \ref{def:mu_subexp_subgaussian}-\ref{def:F_poly_exp}.
The sample complexity of ASTRO-DF is proven under the assumption that $F(\cdot,\XX)$ is sub-exponentially distributed, and it reads 
\begin{align}\label{eq:sample_complexity_ASTRO_DF}
    \mcO \left(
    \epsilon^{-6} \log(\epsilon^{-1})^{1+\delta}
    \right)
    =
    \tilde{\mcO} \left(
    \epsilon^{-6}
    \right)
    ,
    \quad \delta>0,
\end{align}
when only zeroth-order information is available and common random numbers are not used (consistently with the assumptions of our paper). 
Such result is due to a parsimonious logarithmic specification of $\lambda_k$, which ensures the almost sure convergence of ASTRO-DF by using a Bernstein-type inequality bound to cap the stochastic error for large $k$. Specifically, they set
\begin{align}\label{eq:lambda_gamma_NS_Bernstein}
    \lambda_k = \log(k)^{1+\delta},
    \quad \gamma = 2,
    \quad \forall\delta>0.
\end{align}

In our case, we use the more conservative Chebyshev inequality in Theorem \ref{prop:stochastic_error}, which allows us to (i) exploit the variance reduction technique enforced by stratified sampling, and to (ii) relax the sub-exponential assumption of $F(\cdot,\XX)$, which under some specifications of $\mu$ and $F$ is not guaranteed.
For example, if $\XX$ has mutually independent sub-exponential components (i.e., $\mu_i,\forall i,$ is sub-exponential in the sense of Definition \ref{def:mu_subexp_subgaussian}), then $F(\cdot,\XX)$ is sub-exponential if it is linear in $\XX$, but it may not be if it has general polynomial or exponential relation to $\XX$ (see Definition \ref{def:F_poly_exp}).

Although Chebyshev inequality is more conservative than Bernstein-type inequalities, by Theorem \ref{prop:sample_complexity} we know that the SASTRO-DF enjoys a sample complexity that is lower or approximately equal to \eqref{eq:sample_complexity_ASTRO_DF} under the specifications of $F(\cdot,\XX)$ and $\mu$ and dimensions $q$ reported in Table \ref{tab:comparison}.
Besides the sample complexity derived in \citep{ha2025complexity} (named $\NS_B$), in the table we also report the sample complexity of the ASTRO-DF without sub-exponential assumption of $F(\cdot,\XX)$ (see Corollary \ref{corollary:sample_complexity_comparison}), that is, a no-stratification strategy where the stochastic error is bounded via Chebyshev inequality; we remark that in this cases, our dynamics stratification produces an improved complexity under all the analyzed specifications.

\begin{table}[ht]
\caption{
    Approximate sample complexity of the SASTRO-DF for small dimensions $q$ of the underlying random vector $\XX$ and corresponding sample complexity of the no-stratification strategy.
    NS$_C$ (NS$_B$): No-stratification strategy in which the stochastic error is bounded via Chebyshev (Bernstein) inequality.
    NS$_B$ is derived in \citep{ha2025complexity} assuming sub-exponential $F(\cdot,\XX)$.
    Small terms $\delta,\varrho>0$ in the exponent of $\epsilon$ are removed for simplicity, as well as logarithmic functions of $\epsilon$ (see \eqref{eq:sample_complexity}-\eqref{eq:sample_complexity_mu_subgaussian_h_exp}, \eqref{eq:sample_complexity_NS}, \eqref{eq:sample_complexity_ASTRO_DF}).
    $^*$Here we assume the lowest sample complexity according to the values of $\kappa_{\min}$ and $b_{\max}$ (see  \eqref{eq:sample_complexity_mu_subexp_h_exp}), i.e., when $\kappa_{\min}\gg 2b_{\max}$.
    }
\renewcommand{\arraystretch}{1.5}
    \begin{tabular}{llll}
    \hline
    Cases & SASTRO-DF & NS$_C$ & $\text{NS}_B$
    \\ \hline
    $\|\nabla \mu^{-1}\|<\infty, \ \forall \uu\in[0,1]^q$
    & $\epsilon^{-4}, \epsilon^{-5},
    %\epsilon^{-5\frac{3}{5}},
    \epsilon^{-5.6},
    \epsilon^{-6} \ (q=1,\ldots,4)$
    & $\epsilon^{-8}$
    &  \multicolumn{1}{|c|}{$\epsilon^{-6}$}
    %& \multirow{1}{*}{$\epsilon^{-8}$}
    %& \multirow{1}{*}{$\epsilon^{-6}\log(\epsilon^{-1})$}
    \\ \cline{4-4}
    if $\mu_i$ is \eqref{eq:SE} $\forall i=1,\ldots,q$; $F$ is \eqref{eq:PO}
    & $\epsilon^{-5}, \epsilon^{-6} \ (q=1,2)$ & \vdots &
    \\
    if $\mu_i$ is \eqref{eq:SE} $\forall i=1,\ldots,q$; $F$ is \eqref{eq:EX}
    & $\epsilon^{-5}, \epsilon^{-6} \ (q=1,2)\ ^*$ & \vdots &
    \\
    if $\mu_i$ is \eqref{eq:SG} $\forall i=1,\ldots,q$; $F$ is \eqref{eq:PO}
    & $\epsilon^{-5}, \epsilon^{-6} \ (q=1,2)$  & \vdots &
    \\
    if $\mu_i$ is \eqref{eq:SG} $\forall i=1,\ldots,q$; $F$ is \eqref{eq:EX}
    & $\epsilon^{-5}, \epsilon^{-6} \ (q=1,2)$  & \vdots &
    \\ \hline
    \end{tabular}
    
    % $\bftheta$ & $d$-dim decision vector
    % & $F$ & Noisy obj.\ func.\
    % & $N$ & \#samples
    % & $\kappa,\xi,\bsa,\bsb$ & Parameters of $\mu$ or $F$
    % \\
    % $\XX$ & $q$-dim random vector
    % & $\mu$ & Map $\mcX\to\mcU$
    % & $L$ & \#strata
    % & $\epsilon$ & Optimality tolerance
\label{tab:comparison}
\end{table}

\section{Implementation}\label{sec:applications}
\noindent
In this section we report Algorithms \ref{alg:sastro-df} and \ref{alg:aux}, which describe the SASTRO-DF method under the stratification and no-stratification strategies.
Besides the stratification of the random vector, the distinguishable elements corresponding to each of the analyzed specification of $F(\cdot,\XX)$ and $\mu$ comes from the choice of $\lambda_k$ and $\gamma$, which follow \eqref{eq:lambda_gamma}-\eqref{eq:lambda_gamma_mu_subgaussian_h_exp} and \eqref{eq:lambda_gamma_NS} in such a way to enforce almost sure $\epsilon$-convergence for sufficiently large sample size and iteration.
In addition, we discuss applications in which the SASTRO-DF can be a suitable candidate solver, and provide numerical examples to test the efficiency of our methodology.
Moreover, we provide a heuristic data-driven sampling scheme that will be discussed in Section \ref{sec:data_driven}, where the related Algorithm \ref{alg:aux_DM} may be chosen in place of Algorithm \ref{alg:aux} when simulating from a given distribution of $\XX$ is expensive.

\begin{algorithm}
\small
\caption{SASTRO-DF algorithm. $d,q$: dimensions of $\bftheta, \XX,$ respectively (see  \eqref{eq:opt}).
Sampling strategies: stratification (STRAT), no-stratification (NS), discrete-mapping (DM; see Section \ref{sec:data_driven}).
%For ease of notation, the iteration subscript $k$ is removed when possible.
%subscripts are removed when possible (e.g., $\lambda,\gamma$ refer to the $\lambda_k\gamma$ of \eqref{eq:lambda_gamma}-\eqref{eq:lambda_gamma_mu_subgaussian_h_exp},\eqref{eq:lambda_gamma_NS}).
}
\label{alg:sastro-df}
\begin{algorithmic}[1]

\item \textbf{Input:} 
$\bftheta_0\in\RR^d$: initial solution.
$\Delta_0>0$: initial trust-region radius.
$\Deltamax>0$: maximum trust-region radius.
$\eta > 0$: success threshold.
$\tilde{\eta}>0$: scaling constant of the model gradient sufficient size.
$\ovgamma (\ungamma)$: increase (decrease) factor of the trust-region radius (with $0<\ungamma<1<\ovgamma$).
$\sigmamin^2>0$: minimum variance for the sampling rule.
$\Kmax\in\mathbb{N}$: maximum number of iterations.
$\Wmax\in\mathbb{N}$: maximum total number of samples.
%\magenta{$\Nmax\in\mathbb{N}$: maximum number of samples per evaluation.}

\If{STRAT}
\State Set $\nbar\in\mathbb{N}$ (number of samples per strata).
\State Set $\mcN=\{n\in\mathbb{N} \ : \ \frac{n}{\nbar}=\ell,\ \ell^{1/q}\in\mathbb{N}\}$ (set of candidate number of samples per evaluation).
\item \textbf{elseif} NS or DM \textbf{then} \State Set $\nbar=n_k^{(i)}$, $\forall k\geq0,\ i=0,1,\ldots,p,$ and $\mcN=\mathbb{N}$.
\EndIf

\item \textbf{initialization:} Set $w_k = n_k = n_0$, $k = 0$, $\Delta_k=\Delta_0$, $\bftheta_k=\bftheta_0$.

\State Set  $n_0\in\mcN$ (initial number of samples) and $\ell_0=n_0/\nbar$ (number of strata).

\While{$w_k < \Wmax$}

\While{$k < \Kmax$}
\State If STRAT, set 
$\lambda_k,\gamma$ as in \eqref{eq:lambda_gamma}-\eqref{eq:lambda_gamma_mu_subgaussian_h_exp}.
%depending on $F(\cdot,\XX),\mu$. 
If NS, set $\lambda_k,\gamma$ as in \eqref{eq:lambda_gamma_NS}.
If DM, set $\lambda_k,\gamma$ arbitrarily.
\label{algline:lambda_gamma}

\State Set interpolation points $\bftheta_k^{(i)},\ i=1,\ldots,p,$ following Definition \ref{def:polynomial_model_DQ}

\For{$i=0,1,\ldots,p$} \label{algline:for_start}

\State Set sample size $n_k^{(i)}=\min\{n\in\mcN \ : n\geq\lambda_k \ \}$ and number of strata $\ell_k^{(i)}=n_k^{(i)}/\nbar$.

\State If STRAT or NS, find the sample variance $\tilde{s}_k^{(i)}$ via Algorithm \ref{alg:aux}.
%with inputs $l, \bftheta$ , 
If DM, via Algorithm \ref{alg:aux_DM}.
\label{algline:estimated_stat}

\While{$n_k^{(i)}$ does not satisfy the sampling rule \eqref{eq:adaptive_sampling_rule} given $\tilde{s}_k^{(i)}$}
\State Set $\tilde{n}=n_k^{(i)}$.
\State Increase sample size $n_k^{(i)}=\min\{m\in\mcN \ : \ m>\tilde{n}\}$ and number of strata $\ell_k^{(i)} = n_k^{(i)}/\nbar$.
\State If STRAT or NS, find $\tilde{s}_k^{(i)}$ via Algorithm \ref{alg:aux}.
%with inputs $l, \bftheta$ 
If DM, via Algorithm \ref{alg:aux_DM}. 
\label{algline:estimated_stat_2}
\EndWhile

 \State If STRAT or NS, find $\tilde{f}_k^{(i)}$ via Algorithm \ref{alg:aux}.
 If DM, via Algorithm \ref{alg:aux_DM}.
 %with inputs $l$ and $\bftheta^{(i)},

 \EndFor \label{algline:for_end}

 \State Construct $M_k(\boldsymbol{\vartheta}), \ \boldsymbol{\vartheta}\in\mcB(\bftheta_k^{(0)},\Delta_k),$ using $\tilde{f}_k^{(i)}, \ i=0,1,\ldots,p$, as in Definitions \ref{def:polynomial_model}, \ref{def:polynomial_model_DQ}.
 %(or \citealp{ha2025iteration}).

 \State Compute candidate $\bftheta_k^s = \argmin_{\boldsymbol{\vartheta} \in \mcB(\bftheta_k^{(0)},\Delta)} M(\boldsymbol{\vartheta})$. 
 %where $\mcB(\bftheta,\Delta)=\{\boldsymbol{\vartheta} \ : \ \| \boldsymbol{\vartheta}-\bftheta \| \leq \Delta \}$.

 \State Find sample size $n_k^s$ and sample mean $\tilde{f}_k^s$ as in Lines \ref{algline:for_start}-\ref{algline:for_end}.
 %with inputs $l$ and $\bftheta'$.
 
 \State Compute the success ratio $\rho_k = (\tilde{f}_k^{(0)} - \tilde{f}_k^s)/(M_k({\bftheta^{(0)}}) - M_k(\bftheta^s))$ as in \eqref{eq:trust_region_rho}.

 \State If $\rho_k > \eta$ and $\Delta_k \leq \tilde{\eta} \|\nabla M_k(\bftheta_k^{(0)})\|$, set $\bftheta_{k+1}^{(0)} = \bftheta_k^s$, $\Delta_{k+1} = \min \{\ovgamma\Delta_k, \Delta_{\max}\}$. Otherwise set $\Delta_{k+1} = \ungamma\Delta_k$.

 \State Set $ k = k + 1$ and $w_k = w_k + \sum_{i=0}^{p} n_k^{(i)} + n_k^s$.
 
\EndWhile

\EndWhile

\item \textbf{output:} Return optimal solution $\bftheta_k^{(0)}$ and estimated optimum $\tilde{f}_k^{(0)}$.
%with Algorithm \ref{alg:aux}. 
%with inputs $l, \bftheta$. 
\end{algorithmic}
\end{algorithm}

\begin{algorithm}
\caption{Calculations of sample statistics under the SASTRO-DF (set $\ell=1$ for no-stratification scheme).
$^*$Line \ref{algline:generation}: uniform samples from the previous evaluations can be stored, properly rescaled, and used again, while generating only the required additional samples.
}
\label{alg:aux}
\begin{algorithmic}[1]
\item \textbf{Input:} $\ell\in\mathbb{N}$: number of strata. 
$\nbar\in\mathbb{N}$: number of samples per strata. 
$\bftheta\in\RR^d$: current solution.
$F:\Theta \times \mcX \to \mbR$: function as defined in  \eqref{eq:opt}.
$\mu:\mcX\to\mcU$: map between $\mcX$ and $\mcU$ as in Assumption \ref{ass:map}.

\State Compute $\mfL$ as the set of left endpoints in $(0,1]^q$ with $\ell^{1/q}$ splits per dimension (see  \eqref{eq:hypercube_strata_leftend}).

\State \textbf{initialization:} $\tilde{f}=0, \ \tilde{s}=0$.
\For{$\uu \in \mfL$}
 \State $^*$Generate $q$-dimensional vectors $\vv^{[m]}, m=1,\ldots,\nbar$ with Unif(0,1)-distributed components.
 \label{algline:generation}
 \State Compute $\uu^{[m]} = \uu + \vv^{[m]}/\ell^{1/q}, \ m=1,\ldots,\nbar.$
 \State Compute $\xx^{[m]} = \mu^{-1}(\uu^{[m]}), \ m=1,\ldots,\nbar$.
 \State Compute $\hat{f}_{\mfu} = \frac{1}{\nbar} \sum_{m=1}^{\nbar} F(\bftheta,\xx^{[m])})$ and update $\tilde{f} = \tilde{f}+\hat{f}_{\mfu}$.
 \State Compute $\hat{\sigma}^2_{\mfu} = \frac{1}{\nbar-1} \sum_{m=1}^{\nbar} [F(\bftheta,\xx^{[m]}) - \hat{f}_{\mfu}]^2$ and udpate $\tilde{s} = \tilde{s}+\hat{\sigma}^2_{\mfu}/\ell$.
\EndFor
\item \textbf{output:} $\tilde{f}, \tilde{s}$.
\end{algorithmic}
\end{algorithm}

\paragraph{Model fitting}
\noindent
Interesting applications of our adaptive sampling-based optimization method can be found in the context of model fitting.
In machine learning, for example, one can be interested in approximating a computationally expensive function through an artificial neural network in order to get quick real-time evaluations.
This is a crucial objective, for example, in the option pricing context, where a trader can be interested in quickly pricing exotic products and computing the sensitivities with respect to the associated risk factors.
A non-exhaustive list of these applications can be found in \citep{ruf2020neural}.
In these cases, datasets are typically synthetic due to a lack of data of certain types of financial assets, so inputs and output are generated from a given distribution and their amount is arbitrarily large, consistently with our framework.
Another area of application of adaptive sampling optimization schemes can be found in the context of energy model calibrations, as supported by recent literature such as \citep{jain2023wake,jain2024simulation}. 
More generally, our SASTRO-DF may be used to learn a system $\psi:\RR^{d}\times \RR^{q_I}\to\RR^{q_O}$ describing the relation between an input vector $\XX^I\in\RR^{q_I}$ and an output vector $\XX^O\in\RR^{d_O}$, through a coefficient vector $\bftheta\in\RR^{d}$.

To test our method, we consider three toy examples respectively given as
\begin{align}
    & \min_{\bftheta}\ 
    \mbE[ \|\bftheta\|^2+2X],
    \label{eq:ex1} \tag{Ex1}
    \\
    & \min_{\bftheta}\
    \mbE[ \|\bftheta\|^2(1+X)],
    \label{eq:ex2} \tag{Ex2}
    \\
    & \min_{\bftheta}\
    \mbE[ \|X-\bftheta\|^2],
    \label{eq:ex3} \tag{Ex3}
\end{align}
where $\bftheta\in\RR^2$, and $X$ is a truncated one dimensional standard Gaussian random variable, where for the truncation we follow Corollary \ref{cor:truncation} and set a truncation range of $\left[-5,5\right]$, i.e., five standard deviations away from the mean.
Here note that \eqref{eq:ex3} may be considered as a fitting problem in which the model $\psi(\bftheta,\XX^I)=\bftheta$ is expected to learn the noisy output $\XX^O=X$.

We study the efficiency of our algorithm by solving problems \eqref{eq:ex1}-\eqref{eq:ex3} and monitoring the objective function mean values, and corresponding 80\% confidence band, as a function of the number of objective function (noisy) instances $W_k$ (see Equation \eqref{eq:total_samples_PRELIMINARIES}), over a set of 20 runs.
In our test, we set the fixed sample size per strata as $\nbar=2$ and accordingly denote the stratification-based algorithm as SASTRODF-2 in Figure \ref{fig:toys}.
As a benchmark method, we employ the ASTRO-DF with $\lambda_k$ and $\gamma$ set as in Equation \eqref{eq:lambda_gamma_NS}, consistently with convergence results in which Chebyshev inequality is used (see a discussion in Section \ref{sec:comparison}); we denote this algorithm as ASTRODF-C in Figure \ref{fig:toys}.
Moreover, we also employ an analogous stochastic trust-region derivative-free algorithm that does not involve neither adaptive sampling nor stratified sampling, which we denote as TRODF in the figure.
We note that SASTRODF-2 outperforms the benchmark algorithms for all cases \eqref{eq:ex1}-\eqref{eq:ex3}, which supports the theoretical results reported in Section \eqref{sec:complexity} and confirms the importance of adaptively increasing the objective function accuracy as the iterations grow.

\begin{figure}
    \centering
    \includegraphics[width=0.47\linewidth]{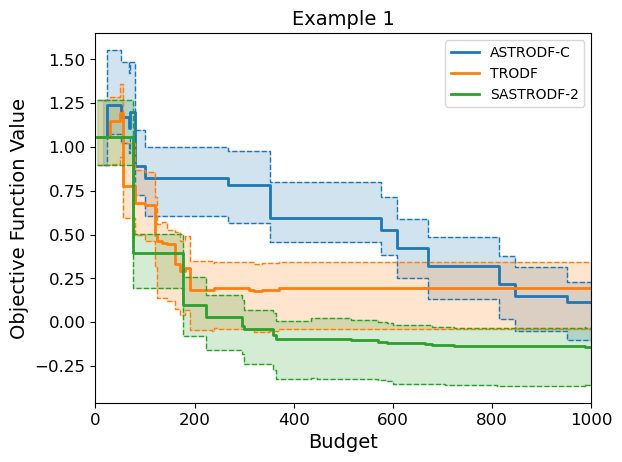}
    \includegraphics[width=0.44\linewidth]{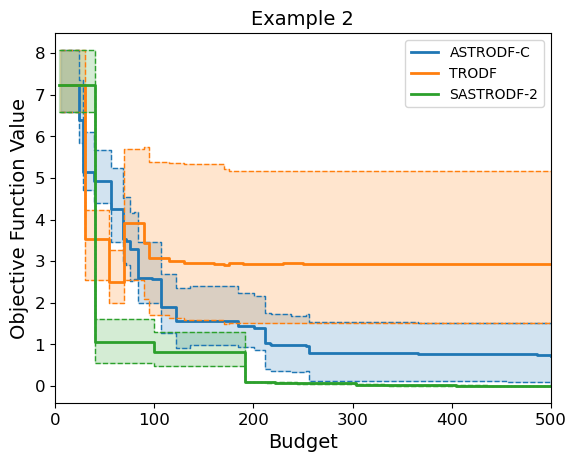}
    \includegraphics[width=0.45\linewidth]{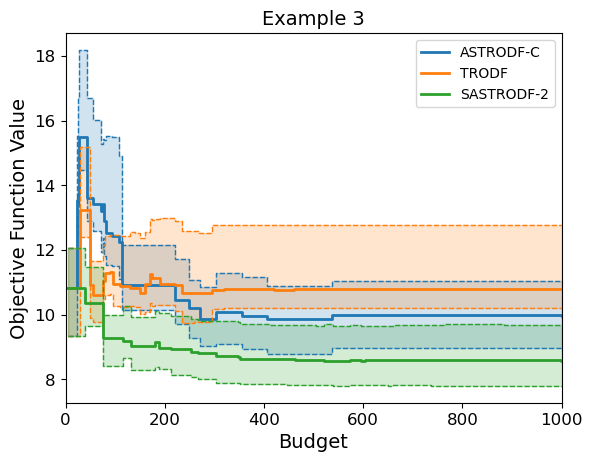}
    \caption{Mean objective function value, and 80\% confidence band, as a function of the MC sample size (denoted as \emph{Budget}).
    Optimization problems: \eqref{eq:ex1}, \eqref{eq:ex2}, \eqref{eq:ex3} (top-left, top-right, and bottom figure, respectively).
    Algorithms: SASTRODF-2, ASTRODF-C, and TRODF implemented via Algorithms \ref{alg:sastro-df}-\ref{alg:aux}.
    SASTRODF-2 computed with Algorithm \ref{alg:sastro-df}-\ref{alg:aux}, with $\nbar=2$
    and $\lambda_k,\gamma$ set as in \eqref{eq:lambda_gamma}.
    ASTRODF-C computed with Algorithm \ref{alg:sastro-df}-\ref{alg:aux}, with one fixed strata 
    $\lambda_k,\gamma$ set as in \eqref{eq:lambda_gamma_NS} (consistently with Chebyshev-like bounds, see Section \ref{sec:comparison}).
    TRODF computed with one fixed strata and fixed sample size over all iterations.
    }
    \label{fig:toys}
\end{figure}

\paragraph{Portfolio and risk management}
\noindent
Another area of application of our SASTRO-DF can be found in the context of portfolio management or hedging problems.
These problems are widely designed as, for example, robust optimization models \citep{fonseca2012robust,kim2018recent,xidonas2020robust,ghahtarani2022robust} and reinforcement learning models \citep{hambly2023recent,liu2023review,wang2025survey}.
Especially in presence of financial derivatives with nonlinear payoffs and market frictions (e.g. transaction costs, liquidity restrictions, market impact), the problem may be nonconvex and with no easy access to gradients.

In the following, we provide an example in which the SASTRO-DF could be a good candidate solver.
Let $\XX\in\RR^q$ be a return vector, $\bftheta\in\RR^d$ be portfolio holdings.
Let further $g_i, i=1,\ldots,d,$ and $\overline{g}$ be payoff functions of given contingent claims, and $\overline{\theta}$ be a fixed holding on an illiquid asset that can be interpreted as a liability.
In the special case in which, say, the $i$-th instrument is a primary financial asset, we trivially have $g_i(\XX)=X_i$.
Then, a portfolio management problem can be formulated as
\begin{align}\label{eq:portfolio_general} \tag{PM}
    \min_{\bftheta} \ 
    & \ \mathbb{E} \left[
    \upsilon\left( \sum_{i=1}^d \theta_i\ g_i(\XX) - \overline{\theta}\ \overline{g}(\XX)\right)
    \right].
%\quad \textrm{s.t.} \ \ \boldsymbol{1}^{\top}\bftheta=1.
\end{align}
where $\upsilon$ is a function describing the risk preferences of the decision maker.
%where constraint $\boldsymbol{1}^{\top}\bftheta=1$ may be addressed via Lagrangian relaxation in order for  \eqref{eq:portfolio_general} to fit into our general problem \eqref{eq:opt}.
When $\overline{\theta}=0$, \eqref{eq:portfolio_general} is a standard portfolio optimization problem with no hedging of illiquid assets involved.
Popular choices of $\upsilon$ are (negative) utility functions (e.g., exponential utility \citealp{madan2007asset,hitaj2013portfolio}) or risk measures such as the conditional value at risk \citep{alexander2006minimizing,stoyanov2013sensitivity}.
In presence of market impact, the probability distribution of $\XX$ may depend on the decision vector, which is a feature that the SASTRO-DF would allow, as the map $\mu$ (see Assumption \ref{ass:map}) is allowed to change across iterations.
In case a given contingent claim has payoff piecewise differentiable (e.g., options), in order to enforce continuous differentiability (Asssumption \ref{ass:smooth}) one could use smoothers such as kernel regression techniques (see, e.g., \citealp{nadaraya1964estimating, watson1964smooth}), or in the particular case of plain vanilla European options, the Black-Scholes formula \citep{black1973pricing} with a sufficiently small time to maturity.
%set in such a way to closely mimic the payoff while ensuring numerical stability.
%Another possibility is to employ the softplus function.
%, defined as $\textrm{softplus}(x,\nu)=(1/\nu) \log (1 + e^{\nu x})$, where $\nu>0$ regulates the smoothness. %\citep{hardle1988far}, \citep{wiemann2024using}

Given this setting, we provide a numerical example in which a portfolio manager takes a long position in a futures written on a given financial asset (e.g., a stock or a commodity) and can buy or sell options contracts in order to adjust the payoff of the position according to their risk preferences, described by a CARA exponential utility function.
In particular, we consider an out-of-the money put option and an out-of-the money call option written on the same underlying asset of the futures.
We let the underlying asset follow a geometric Brownian motion; accordingly, in this single-period example we generate price scenarios as $S_\tau=s_0\mre^{X_\tau}$, where $s_0$ is the initial asset price, $\tau$ is the time horizon, and $X_\tau$ is a normal random variable.
Correspondingly, we set the buying/selling price of these options to their Black-Scholes price.
Even in this test, we truncate the normal distribution as shown in Corollary \ref{cor:truncation} within a sufficiently large truncation range, so that the SASTRO-DF algorithm is provided with $\lambda_k,\gamma$ as in \eqref{eq:lambda_gamma}. 
As a smoother of the option terminal payoff, we select the Black-Scholes (BS) pricing function, denoted as $g_i^{\textrm{BS}}(x,\tau^{\textrm{BS}})$, with a time to maturity $\tau^{\textrm{BS}}$ small enough to ensure that the maximum error is lower than 1\% of the strike price $\kappa_i$.
%, and large enough to guarantee a certain level of smoothness.
Namely, we set $\tau^{\textrm{BS}} = \operatorname{argmax}_{t>0}\left\{ \max_{x\in\RR}\left\{[g_i^{\textrm{BS}}(x,t) - g_i(x)] \ / \ \kappa_i \right\}<0.01 \right\}$.
The parameters of the numerical test are summarized in Table \ref{tab:caraportfolio}.

As in the previous example, we plot the objective function mean values over a set of 20 optimization runs, and also plot the corresponding 80\% confidence ban, in Figure \ref{fig:portfolio}.
With the given specifications, Problem \eqref{eq:portfolio_general} could display multiple local minima, so we repeat our experiment for two different initial guesses, as showcases in the figure.
Observing Figure \ref{fig:portfolio}, the SASTRODF-2 is consistently outperforming ASTRODF-C and TRODF, confirming the results of the previous tests on problems \eqref{eq:ex1}-\eqref{eq:ex3}.
In addition, it demonstrates a remarkable stability of results, as shown by the narrow confidence bands of Figure \ref{fig:portfolio}.

To complement the analysis, we also make a more general comparison among algorithms, including all examples \eqref{eq:ex1}, \eqref{eq:ex2}, \eqref{eq:ex3}, and \eqref{eq:portfolio_general}, in Figure \ref{fig:solvability}.
In this plot we report the average of problems \emph{solved} (see \citealp{eckman2023diagnostic}) by each algorithm as a function of the fraction of budget (i.e., instances of objective functions) used, where we have fixed a maximum budget for all the algorithms up to which the algorithms stops.
In addition to the usual previously reported algorithms, we also test the SASTRO-DF with sample size per strata set to $\nbar=3$ and the ASTRO-DF with sub-exponential assumption of the noisy objective function (i.e., $\lambda_k,\gamma$ as found in \citep{ha2025complexity} using Bernstein-like bounds for convergence results, see Section \ref{sec:comparison}).
The two SASTRO-DF specifications ($\nbar=2,3$) clearly outperforms the given benchmarks, as showcased be Figure \ref{fig:solvability}.
In addition, we note that SASTRO-DF with $\nbar$ is generally more efficient than SASTRO-DF with $\nbar$, suggesting that smaller values $\nbar$ may be preferred in general.

\begin{table}[ht]
%\centering
\caption{Specifications of the portfolio allocation problem \eqref{eq:portfolio_general}.
$S_\tau$: asset price.
$\text{BS}_i$: Black-Scholes price of option $i$.}
\begin{tabular}{lll}
\hline
\textbf{Category} & \textbf{Notation} & \textbf{Value / Description} \\
\hline
\multicolumn{3}{l}{\textit{Preference parameters}} \\
%Utility function & $\upsilon(x)$ & $[1-\exp(-\alpha x)]/\alpha$ \\
Negative utility function & $\upsilon(x)$ & $\exp(-\alpha x)$ \\
Risk aversion coefficient & $\alpha$ & $0.8$ \\
\hline
\multicolumn{3}{l}{\textit{Market parameters}} \\
Risk-free rate & $r$ & $0.002$ \\
Drift of underlying & $\tilde{\mu}$ & $0.05$ \\
Volatility of underlying & $\tilde{\sigma}$ & $0.4$ \\
Initial asset price & $s_0$ & $1$ \\
Log-return truncated range & $\mcX$ & $\tilde{\mu}\pm 10\tilde{\sigma}$ \\
%Smoothing maturity (BS proxy) & $\tau_{\text{small}}$ & $10^{-5}$ \\
\hline
\multicolumn{3}{l}{\textit{Contract parameters}} \\
Maturity of contracts & $\tau$ & $1$ \\
Put option strike & $\kappa_{\text{put}}$ & $0.96$ \\
Call option strike & $\kappa_{\text{call}}$ & $1.07$ \\
Put payoff & $g_1$ & $(\kappa_{\text{put}} - S_{\tau})^+ - \text{BS}_1$ \\
Call payoff & $g_2$ & $(S_{\tau} - \kappa_{\text{call}})^+ - \text{BS}_2$ \\
Futures payoff & $\overline{g}$ & $S_{\tau} - s_0$ \\
\hline
\multicolumn{3}{l}{\textit{Tested algorithms (general method: \ref{alg:sastro-df}-\ref{alg:aux})}} \\
SASTRODF-2 & $\lambda_k,\gamma$ & \eqref{eq:lambda_gamma} \\
ASTRODF-C & $\lambda_k,\gamma$ & \eqref{eq:lambda_gamma_NS} \\
TRODF & & \\
\hline
\end{tabular}

\label{tab:caraportfolio}
\end{table}

\begin{figure}
    \centering
    \includegraphics[width=0.47\linewidth]{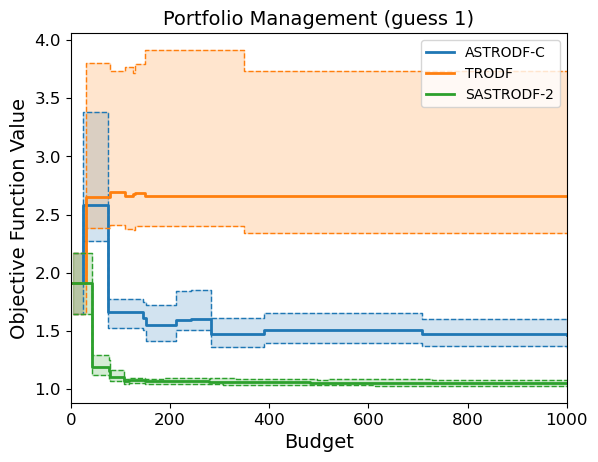}
    \includegraphics[width=0.47\linewidth]{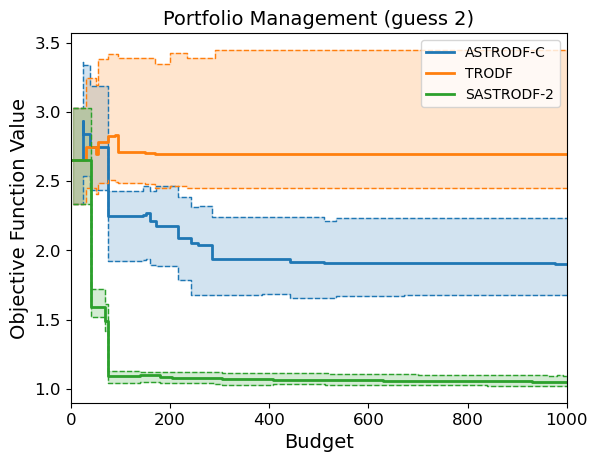}
    \caption{Objective function value as a function of the MC sample size (denoted as \emph{Budget}).
    Optimization problem: \eqref{eq:portfolio_general}.
    Two initial guesses (left and right plot, respectively).
    Algorithms: SASTRODF-2, ASTRODF-C, and TRODF implemented via Algorithms \ref{alg:sastro-df}-\ref{alg:aux}.
    SASTRODF-2 computed with Algorithm \ref{alg:sastro-df}-\ref{alg:aux}, with $\nbar=2$
    and $\lambda_k,\gamma$ set as in \eqref{eq:lambda_gamma}.
    ASTRODF-C computed with Algorithm \ref{alg:sastro-df}-\ref{alg:aux}, with one fixed strata 
    $\lambda_k,\gamma$ set as in \eqref{eq:lambda_gamma_NS} (consistently with Chebyshev-like bounds, see Section \ref{sec:comparison}).
    TRODF computed with one fixed strata and fixed sample size over all iterations.
    }
    \label{fig:portfolio}
\end{figure}

\begin{figure}
    \centering
    \includegraphics[width=0.5\linewidth]{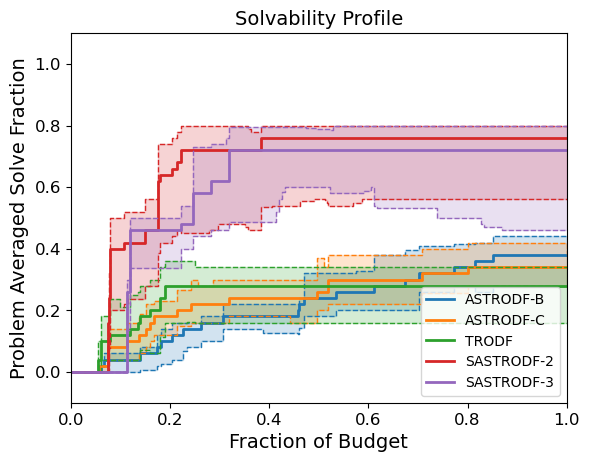}
    \caption{
    Average of problems solved by each algorithm as a function of the fraction of budget used.
    Optimization problems: \eqref{eq:ex1}, \eqref{eq:ex2}, \eqref{eq:ex3}, \eqref{eq:portfolio_general}.
    Algorithms: SASTRODF-2, SASTRODF-3, ASTRODF-C, ASTRO-B, and TRODF implemented via Algorithms \ref{alg:sastro-df}-\ref{alg:aux}.
    SASTRODF-2 (SASTRODF-3) computed with Algorithm \ref{alg:sastro-df}-\ref{alg:aux}, with $\nbar=2$ ($\nbar=3$)
    and $\lambda_k,\gamma$ set as in \eqref{eq:lambda_gamma}.
    ASTRODF-C computed with Algorithm \ref{alg:sastro-df}-\ref{alg:aux}, with one fixed strata and
    $\lambda_k,\gamma$ set as in \eqref{eq:lambda_gamma_NS} (consistently with Chebyshev-like bounds, see Section \ref{sec:comparison}).
    ASTRODF-B computed with Algorithm \ref{alg:sastro-df}-\ref{alg:aux}, with one fixed strata and
    $\lambda_k,\gamma$ set as in \eqref{eq:lambda_gamma_NS_Bernstein} (consistently with Bernstein-like bounds, see Section \ref{sec:comparison}).
    TRODF computed with one fixed strata and fixed sample size over all iterations.
    }
    \label{fig:solvability}
\end{figure}

\subsection{A data-driven alternative in high dimension}\label{sec:data_driven}
\noindent
Estimating a map $\mu$ of the type of Assumption \ref{ass:map} and sampling from it can be computationally expensive in high dimension, especially when a factor-based construction of the type shown in Corollary \ref{cor:factor_based} cannot be used to accurately describe the data.
% In addition, the estimation is subject to model errors especially when a given parametric map is considered.
% When a nonparametric map is used, the estimation error could be lower, but this typically comes at the cost of a lower interpretability.
In cases in which large amounts of high dimensional data are available, we provide an alternative data-driven method based on principal component analysis (PCA) to construct the map of interest, which we denote as $\mu_D$, where $D$ stands for \emph{discrete}.

In particular, consider a dataset composed of $\tilde{n}$ $q$-dimensional data, denoted as
\begin{align}\label{eq:dataset}
    \begin{bmatrix}
        x_{1,1} & \cdots &  x_{1,q}
        \\
        \vdots & \ddots & \vdots
        \\
        x_{\tilde{n},1} & \cdots &  x_{\tilde{n},q}
    \end{bmatrix},
\end{align}
Then, applying a PCA to the dataset \eqref{eq:dataset}, we sort the vector of scores corresponding to the first principal component in increasing order, denoted as $\bsy^{\bss}:=(y^{s_1}_{1},\ldots,y^{s_{\tilde{n}}}_{\tilde{n}})^{\top}$, where the superscript $\bss$ is the vector of original indices before sorting the vector.
Next, consider the left endpoints of the one dimensional unit interval with $\tilde{n}$ splits (as in  \eqref{eq:hypercube_strata_leftend}), i.e., $\mfL^1 := (u_1,u_2,\ldots,u_{\tilde{n}})^{\top} := (0, 1/\tilde{n},\ldots,(\tilde{n}-1)/\tilde{n})^{\top}$, and construct a discrete map
\begin{align*}
    \mu_D:
    \begin{bmatrix}
        y_1^{s_1}
        \\
        \vdots
        \\
        y_{\tilde{n}}^{s_{\tilde{n}}}
    \end{bmatrix}
    \to
    \begin{bmatrix}
        u_1
        \\
        \vdots
        \\
        u_{\tilde{n}}
    \end{bmatrix}.
\end{align*}
Then, we generate a uniform (0,1) MC samples $(u^{[1]},\ldots,u^{[\hat{n}]})^{\top}$, with $\hat{n}\leq\tilde{n}$, and find the nearest entries of $\mfL^1$ corresponding to the MC samples as
\begin{align*}
    u_{j_m} = \max_{u\in\mfL^1} \{ u\leq u^{[m]} \},
    \quad m=1,\ldots, \hat{n}.
\end{align*}
Afterwards, apply the inverse map to sample from the original dataset \eqref{eq:dataset} as
\begin{align}\label{eq:inverseMapPCA}
    \begin{bmatrix}
        x_{s_1,1} & \cdots & x_{s_1,q}
        \\
        \vdots & \ddots & \vdots
        \\
        x_{s_{\hat{n}},1} & \cdots & x_{s_{\hat{n}},q}
    \end{bmatrix}
    \leftarrow
    \begin{bmatrix}
        \mu_D^{-1} (u_{j_1})
        \\
        \vdots
        \\
        \mu_D^{-1} (u_{j_{\hat{n}}})
    \end{bmatrix}.
\end{align}

%strat possible but not so sensible, unless first PC describe most of variability
% can use other sorting methods but PCA looks more reasonable to have ≈structure

Besides avoiding the estimation of $\mu$ and having to sample from it, $\mu_D$ allows to sample any high dimensional data point by means of a one dimensional uniform sampling, as shown in  \eqref{eq:inverseMapPCA}.
%However, $\mu_D$ is not differentiable (and not even continuous), so the order of magnitude of the variance of the objective function estimate, as established in Theorem \ref{prop:var_strat}, is not directly accessible.
%Accordingly, supporting the SASTRO-DF with $\mu_D$ does not guarantee convergence, and the values of $\lambda_k$ and $\gamma$ in the sampling rule \eqref{eq:adaptive_sampling_rule} may then be chosen heuristically.
However, an almost sure bound on the stochastic error of the type of Theorem \ref{prop:stochastic_error} is not directly available, unless we assume that the data points exactly correspond to the support of the random vector and are given equal probability; accordingly, the values of $\lambda_k$ and $\gamma$ in the sampling rule \eqref{eq:adaptive_sampling_rule} may be chosen heuristically.
Alternative ways to bound the stochastic error may be inspired by the works of \citep{berry1941accuracy,esseen1942liapunov}.
%As $\mu_D$ does not attempt to describe the \emph{true} structure of data, stratification becomes obsolete in this case, for which reason the one dimensional uniform is not really stratified in \eqref{eq:inverseMapPCA}.

Moreover, we remark that the above method can potentially be supported with any sorting procedure.
Nonetheless, if the first principal component captures most of the variability in the dataset, $\mu_D$ would be able to describe at least part of the data structure, being more consistent with the typical inverse transform sampling method.
%possibly reducing the variance with respect to a general sorting method.
Associated with the procedure described in this section, we report Algorithm \ref{alg:aux_DM}, which can be used in place of Algorithm \ref{alg:aux} to support the main optimization scheme \ref{alg:sastro-df}.

% We test this scheme in a portfolio management problem of type \eqref{eq:portfolio_general}.
% In this case we assume no liability to hedge against, and we consider a portfolio of 10 primary assets.
% Although we make no assumptions on the underlying distribution, we generate a synthetic dataset coming from a multivariate normal distribution.
% We note that in this case we have an analytical solution to the optimization problem, i.e., $\bftheta^* = \alpha^{-1} \tilde{\Sigma}^{-1} \tilde{\boldsymbol{\mu}}$, which we can use as a benchmark. \giovanni{My guess: solutions will be \underline{too} different}

%in high dim we could also say to not stratify, but still here we do not rely on subexp assumption, which can be more flexible in heavy tailed cases (usual in finance for example).
%\textbf{For data-driven section:} when system does not describe perfectly the relation between inputs  an outputs and the problem is data driven, stratification can lose its power (see Chapter 5 in \citealp{cochran1977sampling}), which is why in data-driven scheme we do not address any stratification (besides the fact that we do not have structure other than the 1st PC). %...otherwise the variability of $\tilde{\epsilon}$ would be predominant when strata grows to infinity, making the two approaches equally efficient in the limit (see Chapter 5 in  \citealp{cochran1977sampling})

\begin{algorithm}
\caption{Calculations of sample statistics under the discrete map framework of Section \ref{sec:data_driven}.
Lines \ref{algline:once}, \ref{algline:once_2} are computed once for the whole Algorithm \ref{alg:sastro-df}.}
\label{alg:aux_DM}
\begin{algorithmic}[1]
\item \textbf{Input:}
$\nbar\in\mathbb{N}$: number of samples (number of strata set to 1). 
$\bftheta\in\RR^d$: current solution.
$F:\Theta \times \mcX \to \RR$: function as defined in  \eqref{eq:opt}.
$\xx_m,\ m=1,\ldots,\tilde{n}$: $q$-dimensional data.

\State Compute $\mfL$ as the set of left endpoints in $(0,1]$ with $\tilde{n}$ splits (see  \eqref{eq:hypercube_strata_leftend}).
\label{algline:once}

\State Order $\xx_m,m=1,\ldots,\tilde{n}$ with the preferred method and construct a discrete map $\mu_D$ to $\mfL$.
\label{algline:once_2}

\State Generate $u^{[m]}, m=1,\ldots,\nbar,$ from a Unif(0,1) distribution.

\State Find
$u_m=\max\{u\in\mfL \ : \ u\leq u^{[m]}\}, \ m=1,\ldots,\nbar$.

\State Compute $\xx_m = \mu_D^{-1}(u_m), \ m=1,\ldots,\nbar$.
\item \textbf{output:} $\tilde{f} = \frac{1}{\nbar} \sum_{m=1}^{\nbar} F(\bftheta,\xx_m)$ and $\tilde{s} = \frac{1}{\nbar-1} \sum_{m=1}^{\nbar} [F(\bftheta,\xx_m) - \tilde{f}]^2$.
\end{algorithmic}
\end{algorithm}

\section{Conclusions}\label{sec:conclusions}
\noindent
In this paper we develop a dynamic stratified sampling mechanism to support stochastic optimization problems with accurate evaluations of the objective function.
Our main optimization algorithm fits into the category of stochastic trust-region derivative-free methods, a recent area of research that can interest several applications in, for example, finance, engineering, and machine learning.
We show that our dynamic stratification enhance the efficiency of previous algorithms in this category.
Namely, we derive the sample complexity of our algorithm under several specifications of the objective function and show a reduced complexity with respect to pseudo random sampling strategies, previously used in this context.
We discuss applications in which our optimization algorithm can be of interest, we illustrate numerical implementations, and propose an alternative data-driven algorithm that may be used in high dimensional problems.

We envisage some interesting direction for future research.
As explained in Chapter 4.4 of \citep{glasserman2004monte}, Latin hypercube sampling (LHS) can be a more efficient method to stratify the state space in high dimension with respect to standard stratification, when the margins of the given random vector are mutually independent.
In addition, the function of the random vector (denoted as $F(\cdot,\XX)$ in our paper) should have an \emph{additive} structure with respect to its argument $\XX$ (see \citealp{stein1987large}), for LHS to perform well.
Taking into account these two conditions, it would be interesting to compare our method with the corresponding optimization algorithm equipped by a LHS strategy in terms of sample complexity and numerical experiments. 
We further note that generalizations of LHS in presence of dependence have been studied by \citep{packham2010latin}, which could extend this line of research.
Alternatively, another stratification strategy that can be of interest in high dimensional problems is post-stratification \citep{glasserman2004monte}, which is remarkably flexible as it requires little information on the data structure.

Moreover, it can be interesting to investigate the sample complexity of the SASTRO-DF when the underlying random vector follows a heavy-tailed distribution, such as distributions of L\'evy type \citep{ken1999levy}.
When $\XX$ is multidimensional, we note that factor-based constructions of these random vectors have been thoroughly explored in the last years \citep{bianchi2025welcome}, providing parsimonious calibration methods for the associated distribution functions.
In this context, computing the order of magnitude of the variance of $F(\cdot,\XX)$ take advantage of Corollary \ref{cor:factor_based}, once the order of magnitude of the variance in one dimension is identified.
We further note that L\'evy-type distributions are typically know up to an inversion of the associated Fourier transform; in such cases, there exist efficient procedures to generate samples in one dimension \citep{glasserman2010sensitivity,azzone2023fast}, or in multiple dimensions, when a factor-based representation is available \citep{amici2025multivariate}.

% \magenta{Future/current research will involve bounding the stochastic error with Bernstein-type inequalities in which the sample variance plays a role.
% Namely, the sample variance should be used to recover new optimal values of $\lambda_k,\gamma$ needed to bound the stochastic error to a term of the order $\mcO(\Delta_k^2)$, for $k\to\infty$, which in turn enforces the almost sure convergence of the algorithm.
% Such new values of $\lambda_k,\gamma$ could be less conservative than those obtained via Chebyshev inequality as done in Theorem \ref{prop:stochastic_error}.}

\appendix
% \section{My Appendix}
% Appendix sections are coded under \verb+\appendix+.

% \verb+\printcredits+ command is used after appendix sections to list 
% author credit taxonomy contribution roles tagged using \verb+\credit+ 
% in frontmatter.

%\printcredits

%% Loading bibliography style file
%\bibliographystyle{model1-num-names}
\bibliographystyle{cas-model2-names}

% Loading bibliography database
%\bibliography{cas-refs}

%\bibliographystyle{informs2014}
\bibliography{refs}

%\vskip3pt

% \bio{}
% Author biography without author photo.
% Author biography. Author biography. Author biography.
% Author biography. Author biography. Author biography.
% Author biography. Author biography. Author biography.
% Author biography. Author biography. Author biography.
% Author biography. Author biography. Author biography.
% Author biography. Author biography. Author biography.
% Author biography. Author biography. Author biography.
% Author biography. Author biography. Author biography.
% Author biography. Author biography. Author biography.
% \endbio

% \bio{figs/cas-pic1}
% Author biography with author photo.
% Author biography. Author biography. Author biography.
% Author biography. Author biography. Author biography.
% Author biography. Author biography. Author biography.
% Author biography. Author biography. Author biography.
% Author biography. Author biography. Author biography.
% Author biography. Author biography. Author biography.
% Author biography. Author biography. Author biography.
% Author biography. Author biography. Author biography.
% Author biography. Author biography. Author biography.
% \endbio

% \vskip3pc

% \bio{figs/cas-pic1}
% Author biography with author photo.
% Author biography. Author biography. Author biography.
% Author biography. Author biography. Author biography.
% Author biography. Author biography. Author biography.
% Author biography. Author biography. Author biography.
% \endbio

\end{document}